\documentclass{elsarticle}
\usepackage[utf8]{inputenc}
\usepackage[english]{babel}
\usepackage{amsmath,amssymb,amsfonts,amsthm}
\usepackage[hidelinks,colorlinks = true]{hyperref}
\usepackage{enumitem}
\usepackage{tikz-cd}
\usepackage{tcolorbox}
\newtcolorbox{mybox}{colback=red!5!white,colframe=red!75!black, sharp corners = all}
\usepackage{bclogo} 
\def\ucsign{\bcpanchant}

\let\det\undefined
\DeclareMathOperator{\det}{det}
\DeclareMathOperator{\rk}{rk}

\DeclareMathOperator{\id}{id}

\DeclareMathOperator{\Img}{im}
\DeclareMathOperator{\Char}{char}
\DeclareMathOperator{\Span}{span}

\DeclareMathOperator{\rad}{rad}

\DeclareMathOperator{\colsp}{\mathbf C}

\def\F{{\mathbb F}}
\def\L{{\mathcal L}}
\def\M{{\mathcal M}}
\def\SCS{{\mathbf S}}

\newenvironment{psmallmatrix}
  {\left(\begin{smallmatrix}}
  {\end{smallmatrix}\right)}

\newtheorem{theorem}{Theorem}[section]
\newtheorem{lemma}[theorem]{Lemma}
\newtheorem{corollary}[theorem]{Corollary}

\theoremstyle{definition}
\newtheorem{definition}[theorem]{Definition}

\newtheorem{remark}[theorem]{Remark}
\newtheorem{example}[theorem]{Example}
\numberwithin{equation}{section}

\begin{document}
\title{Nonlinear maps preserving the polynomial}
\author[1]{Andrey Yurkov}
\ead{andrey.yurkov@biu.ac.il}
\affiliation[1]{organization={
Bar-Ilan University},
postcode={5290002},
city={Ramat-Gan},
country={Israel}}

\begin{abstract}
Let $\F$ be a field and $P \in \F[x_1,\ldots, x_n]$ be a homogeneous polynomial such that $|\F| > \deg(P)$ and $\phi, \psi\colon \F^n \to \F^n$ be two maps such that $P(\mathbf{x} + \lambda\mathbf{y}) = P(\phi(\mathbf{x}) + \lambda \psi (\mathbf{y}))$ for all $\lambda \in \F$ and $\mathbf{x}, \mathbf{y} \in \F^n.$

We provide the characterization of all such $\phi$ and $\psi$ for all polynomials in the case if $\Char(\F) = 0$ and for all polynomials satisfying certain condition in the case if $\Char(\F) > 0$. This characterization generalizes the existing results regarding the linear maps on matrices
preserving the determinant, the immanant and other homogeneous polynomial functions of matrix entries.

To obtain the main result of this paper, we introduce the vector space $\mathcal L_{P} \subseteq {\F^n}^*$ spanned by the range of the gradient field of $P \in \F[x_1,\ldots, x_n]$. Being a linear invariant associated with $P,$ this space has several remarkable properties and may also be used for studying the linear maps preserving $P$.

In addition, we demonstrate how the main result could be applied to the particular polynomial matrix invariants. Namely, we provide an explicit description of corresponding pairs of nonlinear maps $\phi, \psi$  for the case where $P$ is equal to the Cullis' determinant of $n\times k$ rectangular matrix (with the assumption that $n \ge k + 2$ and $k \ge 3$).
\end{abstract}

\begin{keyword}
polynomial \sep preserver \sep nonlinear \sep linear preservers \sep Cullis' determinant
\MSC[2020]{47B49, 15A04, 15A15}
\end{keyword}

\maketitle

\thispagestyle{empty}

\section{Introduction}
\label{sec:intro}
The investigations of linear maps between the matrix spaces that preserve different matrix invariants go back to Frobenius and his theorem about linear maps preserving matrix determinant published in~1897 (see~\cite{GF}). 

\begin{theorem}[Frobenius, {\cite[\textsection 7, Theorem I]{GF}}]\label{thm:frobeniusthm}
Let $S\colon \M_{n}(\mathbb C) \to \M_{n}(\mathbb C)$ be a bijective linear map satisfying $\det (S(X))=\det (X)$ for all $X \in \M_{n} (\mathbb C)$, where $\mathbb C$ denotes the field of complex numbers and $\M_{n} (\mathbb C)$ denotes the space of all square matrices of size $n$ with entries from $\mathbb C$. Then there exist matrices $M, N \in \M_n(\mathbb C)$ with $\det (MN) = 1$ such that
$$S(X) = MXN\;\;\mbox{for all}\; X \in \M_{n} (\mathbb C)
\;\;\text{or}\;\;
S(X) = MX^{t}N\;\;\mbox{for all}\; X \in \M_{n}(\mathbb C).$$
\end{theorem}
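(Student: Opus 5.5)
We prove Frobenius' theorem by the classical route through rank-one matrices, followed by a rigidity argument for linear maps preserving rank one.

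\textbf{Step 1: singular matrices go to singular matrices.} Since $S$ is bijective and linear, for $X,Y\in\M_n(\mathbb C)$ we have $\det(X+\lambda Y)=\det(S(X)+\lambda S(Y))$ for all $\lambda\in\mathbb C$. Hence $S$ maps the determinantal hypersurface $\{\det=0\}$ onto itself, and so does $S^{-1}$.

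\textbf{Step 2: rank one is preserved.} We characterize rank by the polynomial $\lambda\mapsto\det(A+\lambda X)$. A matrix $X$ has rank at most $1$ iff, for every $A$, this polynomial has degree at most $1$ in $\lambda$. Indeed, if $\rk X=r$, write $X=PD_rQ$ with $P,Q$ invertible and $D_r=\mathrm{diag}(1,\dots,1,0,\dots,0)$. Then the coefficient of $\lambda^r$ in $\det(A+\lambda X)$ is, up to the nonzero factor $\det P\det Q$, a minor of $P^{-1}AQ^{-1}$, and that minor is not identically zero as a function of $A$. So $\rk X$ equals the maximal degree of $\lambda\mapsto\det(A+\lambda X)$ over $A$. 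Since $S$ is a linear bijection preserving $\det$, it preserves these polynomials, and therefore preserves rank exactly. In particular it preserves the rank-one set $\{uv^t: u,v\ne0\}$.

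\textbf{Step 3: structure of rank-one preservers (main obstacle).} Call the spaces $\{uv^t:v\in\mathbb C^n\}$ (fixed $u$) and $\{uv^t:u\in\mathbb C^n\}$ (fixed $v$) maximal rank-one subspaces. A linear subspace consisting entirely of matrices of rank $\le1$ lies in one of these two families. This follows because two independent rank-one matrices have a rank-one sum iff they share a column space or a row space. Since $S$ and $S^{-1}$ preserve rank one and linear structure, $S$ permutes the maximal rank-one subspaces. Two subspaces of the same family meet only in $0$, while subspaces of different families meet in a line; $S$ preserves this intersection pattern. Hence $S$ either preserves each family or swaps them. In the swapping case, compose $S$ with transposition to reduce to the preserving case.

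In the preserving case we get induced maps $f,g$ on lines of $\mathbb C^n$ with $S(uv^t)\in f(u)g(v)^t$. The plan is then:
\begin{itemize}
\item Use linearity of $S$ on sums $u_1v^t+u_2v^t$ to show that $f$ comes from a linear map $M$, and similarly $g$ from a linear map $N^t$, obtaining $S(uv^t)=c(u,v)\,Mu\,(N^tv)^t$.
\item Use bilinearity in $u$ and $v$ to show $c$ is constant, and absorb it into $M$.
\end{itemize}
Since rank-one matrices span $\M_n(\mathbb C)$, this gives $S(X)=MXN$ or $S(X)=MX^tN$.

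\textbf{Step 4: normalization.} Comparing determinants gives $\det(MN)\det X=\det X$ for all $X$, so $\det(MN)=1$.

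The delicate point is Step 3. One must show that the scalar factors are consistent, i.e.\ that the map induced on lines genuinely lifts to a linear map rather than a merely semilinear one. Here the linearity of $S$ itself forces this, so no field automorphism can appear.
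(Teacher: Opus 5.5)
The paper gives no proof of this statement. Frobenius' theorem is quoted from \cite{GF} only as motivation. The paper's own results (Lemma~\ref{lem:PPhiPsi}, Theorem~\ref{thm:homogenphipsi}) reduce nonlinear preservers to linear ones and use classifications of linear preservers as black boxes. So there is nothing internal to compare against, and your proposal has to stand on its own.

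It does: your proposal is correct. It follows the classical route det-preserver $\Rightarrow$ rank-preserver $\Rightarrow$ rank-one preserver $\Rightarrow$ $X\mapsto MXN$ or $MX^tN$.

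Your rank characterization in Step~2 is right. Writing $B=P^{-1}AQ^{-1}$, multilinearity gives $\det(B+\lambda D_r)=\sum_{J\subseteq\{1,\dots,r\}}\lambda^{|J|}\det B(J|J)$. This shows two things:
\begin{itemize}
\item the upper bound $\deg\le r$, which you should state explicitly, since your ``iff'' needs it;
\item that the top coefficient $\det B(1,\dots,r|1,\dots,r)$ is not identically zero in $A$.
\end{itemize}
The intersection-pattern argument and the reduction of the swapping case by transposition are also fine. These need $n\ge 2$; the case $n=1$ is trivial.

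Two steps are only sketched and should be written out.

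First, the pairwise criterion alone does not show that a subspace of rank-$\le 1$ matrices lies in a single $\{uv^t: v\in\mathbb C^n\}$ or $\{uv^t: u\in\mathbb C^n\}$. Add a three-element argument. Suppose $uv_1^t$ and $uv_2^t$ are independent members, and $u'v_3^t$ is a member with $u'\not\parallel u$. Pairing $u'v_3^t$ with each of the first two forces $v_3\parallel v_1$ and $v_3\parallel v_2$, which is a contradiction.

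Second, the lifting is cleaner if you avoid sums $u_1v^t+u_2v^t$. Those only give collinearity of $f$ and invite a detour through projective geometry. Argue directly instead:
\begin{itemize}
\item Fix $v\neq 0$ and let $g_v$ span $g(v)$. The map $u\mapsto S(uv^t)$ is a linear injection into the $n$-dimensional space $\{w\,g_v^t\}$.
\item Hence $S(uv^t)=(M_vu)\,g_v^t$ with $M_v$ invertible, and $M_vu$ spans $f(u)$ for $u\neq 0$.
\item For two such $v_0,v$, the map $M_{v_0}^{-1}M_v$ has every vector as an eigenvector, so it is a scalar.
\item Therefore $S(uv^t)=(Mu)\,h(v)^t$ for a single invertible $M$. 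Linearity in $v$ then forces $h$ to be linear, $h(v)=N^tv$.
\end{itemize}
This makes your closing remark about semilinearity rigorous.

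Your Step~2 also links to the paper: it shows $\rad(\det)=\{0\}$, so the bijectivity hypothesis is redundant. If $S(Y)=0$, then $\det(X+\lambda Y)=\det(S(X))=\det X$ for all $X$ and $\lambda$, so $Y\in\rad(\det)=\{0\}$. This is exactly the mechanism behind the injectivity claim in Lemma~\ref{lem:PPhiPsi} and Waterhouse's criterion \cite{Waterhouse1983}.
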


This field of mathematics continues to be a subject of active research and has applications in  operator theory, functional analysis and quantum information theory. We refer the reader to~\cite{LAMA199233} for the comprehensive survey  of results up to the end of the 20th century.

In the recent works (\cite{Dolinar2002,TAN2003311,Kuzma2008,Costara2021} and others) the linearity of the considered maps has been replaced to a slightly weaker condition. In 2002, Dolinar and \v{S}emrl provided a characterisation of the surjective maps $\phi \colon \M_n(\mathbb C) \to \M_n(\mathbb C)$ on the space of all square matrices of size $n$ such that
\begin{equation}\label{eq:conditroduction1}
\det (A + \lambda B) = \det (\phi (A) + \lambda \phi (B))\;\;\mbox{for all}\;\;A, B \in \M_n(\mathbb C)\;\;\mbox{and}\;\;\lambda \in \mathbb C
\end{equation}
(see~\cite{Dolinar2002}). Namely, they established that every surjective map $\phi \colon \M_n(\mathbb C) \to \M_n(\mathbb C)$ satisfying the condition~\eqref{eq:conditroduction1} is linear. Then, Theorem~\ref{thm:frobeniusthm} is consequently applied to obtain the required characterization. Soon after, in 2003, Tan and Wang showed that the surjectivity of $\phi$ could be omitted and the field $\mathbb C$ of complex numbers could be replaced to an arbitrary field of sufficient cardinality (see~\cite{TAN2003311}).

In 2008, Kuzma provided the characterization of $\phi$ satisfying the condition~\eqref{eq:conditroduction1}, where the determinant is replaced to the matrix immanant (see~\cite{Kuzma2008}). Moreover, he showed that the map $\phi$ is linear even if the immanants on the left-hand side and the right-hand side of this condition are not equal (for example, if we put the matrix permanent on the left-hand side, and the matrix determinant on the right-hand side).

In 2021, Costara provided another extension of the results of Dolinar, \v{S}emrl, Tan and Wang discussed above (see~\cite{Costara2021}). He considered the pairs of maps $\phi$ and $\psi$ on $\M_{n}(\mathbb C)$ such that
\begin{equation}\label{eq:condintroduction2}
E_k (A + \lambda B) = E_k (\phi (A) + \lambda \psi (B))\;\;\mbox{for all}\;\;A, B \in M_{n}(\mathbb C)\;\;\mbox{and}\;\;\lambda \in \mathbb C.
\end{equation}
 and one of the maps $\phi, \psi$ is surjective. In this condition by $E_k(X)$ we denote the coefficient of $t^{n-k}$ in the of expansion the characteristic polynomial of $X,$ i.e., the polynomial $\det (tI_n + X).$ He established that if $k \ge 2$, $\phi$ and $\psi$ satisfy the condition~\eqref{eq:condintroduction2}, then $\phi = \psi$ and both $\phi$ and $\psi$ are linear. Since $E_{n}(X) = \det (X)$ for all $X \in \M_{n}(\mathbb C)$, this fact indeed extends the results from~\cite{Dolinar2002, TAN2003311} and~\cite{Costara2021}.

In this paper we make the further steps in this direction. Namely, we show that the Costara's result holds for all homogeneous polynomial functions on $\F^n$, where $\F$ denotes a field with zero characteristic, and omit the condition of surjectivity as it is formulated in the theorem below.

\begin{theorem}[see Theorem~\ref{thm:homogenphipsichar0}]\label{thm:homogenphipsiintrochar0}Assume that $\Char(\F) = 0$. Let $P \in \F[x_1,\ldots, x_n]$ be a homogeneous polynomial.  If  $\phi, \psi \colon \F^n \to \F^n$ are such that
\begin{equation*}
P(\mathbf{x} + \lambda \mathbf{y}) = P(\phi(\mathbf{x}) + \lambda \psi(\mathbf{y}))\;\;\mbox{for all}\;\;\mathbf{x},\mathbf{y} \in \mathbb \F^n,\; \lambda \in \F,
\end{equation*}
then there exists a unique linear map $T_{\rad} \colon \F^n/\rad(P) \to \F^n/\rad(P)$ preserving $P_{\rad(P)}$ such that
\begin{equation*}
\pi_{\rad(P)} \circ \phi = \pi_{\rad(P)} \circ \psi = T_{\rad(P)} \circ \pi_{\rad(P)},
\end{equation*}
where by $\rad(P)$ we denote the radical of $P$ defined by $$\rad(P) = \{\mathbf x \in \F^n \mid P(\mathbf a + \lambda \mathbf x) = P(\mathbf a)\;\;\mbox{for all}\;\; \mathbf a \in \F^n, \lambda \in \F\}.$$
\end{theorem}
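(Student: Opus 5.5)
Throughout, $d = \deg P$. Since $\Char(\F) = 0$, the field is infinite, so polynomial identities in $\lambda$ can be compared coefficientwise.

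\textbf{Step 1: expand in $\lambda$.} Both sides of the hypothesis are polynomials in $\lambda$ of degree at most $d$. Comparing coefficients gives, for every $0 \le k \le d$, an identity between the $k$-th polarizations:
$$D^{k}P(\mathbf x)[\mathbf y,\ldots,\mathbf y] = D^{k}P(\phi(\mathbf x))[\psi(\mathbf y),\ldots,\psi(\mathbf y)].$$
Two cases are immediate. For $k=0$ this gives $P(\mathbf x) = P(\phi(\mathbf x))$. For $k=d$ it gives $P(\mathbf y)=P(\psi(\mathbf y))$. For $k=1$ it gives
$$\nabla P(\mathbf x)\cdot \mathbf y = \nabla P(\phi(\mathbf x))\cdot\psi(\mathbf y) \qquad \text{for all } \mathbf x,\mathbf y.$$

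\textbf{Step 2: $\psi$ is linear modulo the annihilator of $\L_P$.} Let $\L_P\subseteq {\F^n}^*$ be the span of the values $\nabla P(\mathbf a)$. Fix functionals $\nabla P(\mathbf a_1),\ldots,\nabla P(\mathbf a_r)$ forming a basis of $\L_P$. The $k=1$ identity shows $\nabla P(\phi(\mathbf a_i))(\psi(\mathbf y)) = \nabla P(\mathbf a_i)(\mathbf y)$, which is linear in $\mathbf y$. Hence the map $\mathbf y\mapsto (\nabla P(\phi(\mathbf a_i))(\psi(\mathbf y)))_i$ is linear and injective modulo $\L_P^{\perp}$.

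Next I show $\L_P^{\perp}=\rad(P)$. The inclusion $\rad(P)\subseteq\L_P^\perp$ holds because differentiating $P(\mathbf a+\lambda\mathbf x)=P(\mathbf a)$ at $\lambda=0$ gives $\nabla P(\mathbf a)\cdot\mathbf x = 0$. Conversely, if every directional derivative of $P$ along $\mathbf x$ vanishes, then $P(\mathbf a+\lambda\mathbf x)$ is constant in $\lambda$. This step uses characteristic $0$ through Taylor's formula.

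By symmetry, the functionals $\nabla P(\phi(\mathbf x))$, evaluated along the image of $\psi$, reproduce all of $\L_P$. I want a linear $T$ with $\psi(\mathbf y) - T\mathbf y\in \rad(P)$. My approach is to show that $\ell(\psi(\mathbf y))$ is linear in $\mathbf y$ for every $\ell \in \L_P$ by writing $\ell$ in terms of the $\nabla P(\phi(\mathbf a_i))$. This requires knowing that the $\nabla P(\phi(\mathbf a))$ span $\L_P$, and that their evaluations on $\psi$ determine $\psi$ modulo $\rad(P)$. This is the main obstacle. I would try to derive it from $P\circ\phi=P$ together with the $k=1$ identity applied to $\mathbf x$ replaced by $\mathbf x+\mu\mathbf y$, comparing with the $k$-th identities, and I would use the remark that $\L_P$ is a linear invariant of $P$.

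\textbf{Step 3: $\phi$ agrees with $\psi$ modulo the radical.} Put $\mathbf y=\mathbf x$ and consider $P((1+\lambda)\mathbf x)=P(\phi(\mathbf x)+\lambda\psi(\mathbf x))$ for all $\lambda$. Taking $\lambda=-1$ gives $P(\phi(\mathbf x)-\psi(\mathbf x))=0$. More is needed, so I replace $\mathbf x$ by $\mathbf x+\mathbf z$ and use linearity of $\psi$ modulo $\rad(P)$ to compare the $k=1$ identities for $\phi$ and $\psi$. The aim is to get $\nabla P(\mathbf a)\cdot(\phi(\mathbf x)-T\mathbf x)=0$ for all $\mathbf a$, so that $\phi(\mathbf x)-T\mathbf x\in\L_P^\perp=\rad(P)$.

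\textbf{Step 4: descend to the quotient.} $P$ is constant along cosets of $\rad(P)$, so it descends to $P_{\rad(P)}$ on $\F^n/\rad(P)$. The linear map $T$ preserves $\rad(P)$ and therefore induces $T_{\rad(P)}$. The $k=d$ identity (or $k=0$) shows that $T_{\rad(P)}$ preserves $P_{\rad(P)}$. Uniqueness holds because $\pi_{\rad(P)}$ is surjective and $T_{\rad(P)}\circ\pi_{\rad(P)}$ is prescribed as $\pi_{\rad(P)}\circ\psi$.
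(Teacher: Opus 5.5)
Steps 2 and 3 of your proposal are not actually proved, and together they carry the whole theorem.

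\textbf{Step 2.} You correctly isolate the obstacle, namely that $\ell\circ\psi$ is linear for every $\ell\in\mathcal L_P$, but you leave it open. The route you sketch is not an argument. Linear invariance of $\mathcal L_P$ is of no use here, since neither $\phi$ nor $\psi$ is yet known to be linear. The missing idea, which is the Tan--Wang argument of the paper's Lemma~\ref{lem:PPhiPsi}, is short.
\begin{itemize}
\item Put $m_i=\nabla P(\phi(\mathbf a_i))$. These functionals lie in $\mathcal L_P$ simply by the definition of $\mathcal L_P$.
\item The $k=1$ identity $m_i(\psi(\mathbf y))=\nabla P(\mathbf a_i)(\mathbf y)$ forces the $m_i$ to be linearly independent: a relation $\sum_i\mu_i m_i=0$, evaluated at $\psi(\mathbf y)$, gives $\sum_i\mu_i\nabla P(\mathbf a_i)=0$, hence $\mu=0$.
\item So $m_1,\dots,m_r$ is a basis of $\mathcal L_P$. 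Every $\ell\in\mathcal L_P$ is a combination of the $m_i$, and $\ell\circ\psi$ is the same combination of the $\nabla P(\mathbf a_i)$. In particular $\ell\circ\psi$ is linear and vanishes on $\rad(P)$. The paper phrases this as invertibility of the matrix $C$ defined by $\nabla P(\phi(\mathbf a_i))=\sum_j c_{ij}\nabla P(\mathbf a_j)$.
\item Choose $\mathbf e_j$ with $\nabla P(\mathbf a_i)(\mathbf e_j)=\delta_{ij}$ and set $T\mathbf y=\sum_j\nabla P(\mathbf a_j)(\psi(\mathbf y))\,\mathbf e_j$. This $T$ is linear, kills $\rad(P)$, and satisfies $\psi(\mathbf y)-T\mathbf y\in\mathcal L_P^\perp=\rad(P)$.
\end{itemize}
You also never show that $T$ is bijective modulo $\rad(P)$, which is needed below. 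It follows directly: if $T\mathbf y\in\rad(P)$, then $\nabla P(\mathbf a_i)(\mathbf y)=m_i(\psi(\mathbf y))=m_i(T\mathbf y)=0$ for all $i$, so $\mathbf y\in\rad(P)$.

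\textbf{Step 3.} This step is likewise only an aim, and the mechanism you propose cannot succeed. The $k=0$, $k=1$ and $k=d$ identities, even with $\psi$ linear, do not imply $\phi\equiv\psi$ modulo $\rad(P)$. Take $P=x_1^3+x_2^3$, so that $\rad(P)=\{\mathbf 0\}$. Let $\psi$ be the identity, and let $\phi$ be the identity except that $\phi(1,-1)=(-1,1)$. All three identities hold, because the two points have the same value $0$ and the same gradient $(3,3)$; only the $\lambda^2$-coefficient fails.

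What is needed, and what the paper does in Claim~(iv) of the proof of Theorem~\ref{thm:homogenphipsi}, is to use the hypothesis itself at $\lambda=1$, together with surjectivity of $T$ modulo $\rad(P)$.
\begin{itemize}
\item Given $\mathbf a$, choose $\mathbf b$ with $T\mathbf b-\mathbf a\in\rad(P)$.
\item Then $\mathbf a+\phi(\mathbf x)-\psi(\mathbf x)\equiv\phi(\mathbf x)+\psi(\mathbf b-\mathbf x)$ modulo $\rad(P)$.
\item Hence $P(\mathbf a+\phi(\mathbf x)-\psi(\mathbf x))=P(\mathbf b)=P(\psi(\mathbf b))=P(\mathbf a)$.
\item Homogeneity, in the form $P(\mathbf a+\lambda\mathbf w)=\lambda^{d}P(\lambda^{-1}\mathbf a+\mathbf w)$ for $\lambda\neq0$, then gives $\phi(\mathbf x)-\psi(\mathbf x)\in\rad(P)$.
\end{itemize}
This step uses your $k=d$ identity $P\circ\psi=P$, obtained by comparing $\lambda^d$-coefficients. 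That identity is correct and is a real simplification: the paper reaches $P\circ\psi=P$ only after first proving $\phi(\mathbf 0)\in\rad(P)$ via Lemma~\ref{lem:StrangeCondImplyRadP}.
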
 
The notion of the radical of the function (Definition~\ref{def:radical}) was introduced by Waterhouse in~\cite{Waterhouse1983}. This notion plays an important role in the theory of linear maps preserving matrix invariants. For example, the radical of the function $f$ is equal to zero if and only if every linear map preserving $f$ is invertible (Proposition~1 in~\cite{Waterhouse1983}).  
Accordingly, it is possible to show that $\rad(E_k) = \{0\}$ for $k \ge 2$ (i.e.,~\cite[Corollary~3.2]{Costara2021}), which implies that Theorem~\ref{thm:homogenphipsiintrochar0} indeed extends the result of Costara. 

Furthermore, the statement of Theorem~\ref{thm:homogenphipsiintrochar0} could be generalized for the fields $\F$ of positive characteristic and every polynomial $P \in \F[x_1,\ldots, x_n]$ satisfying certain restriction. This comprises the main result of this paper. In order to formulate it we introduce the notion of vector space $\mathcal L_P \subseteq \left(\F^n\right)^*$ associated with $P$ (here $\left(\F^n\right)^*$ denotes the dual vector space). Namely, by $\mathcal L_P$ we denote the vector space generated by all linear functions on $\F^n$ having the form $\mathbf v \mapsto \frac{\partial P}{\partial \mathbf v}(\mathbf a)$ for some fixed $\mathbf a \in \F^n$ (Definition~\ref{def:LP}). We also need to notice that the definition of the radical implies that $\rad(P)$ forms a vector space. Thus, the main theorem of this paper is formulated as follows.

\begin{theorem}[see Theorem~\ref{thm:homogenphipsi}]\label{thm:homogenphipsiintro}Let $P \in \F[x_1,\ldots, x_n]$ be a homogeneous polynomial such that $\deg (P) < |\F|$ and $\dim (\mathcal L_P) + \dim (\rad(P)) = n$.  If  $\phi, \psi \colon \F^n \to \F^n$ are such that
\begin{equation}\label{thm:homogenphipsiintro:eqq1}
P(\mathbf{x} + \lambda \mathbf{y}) = P(\phi(\mathbf{x}) + \lambda \psi(\mathbf{y}))\;\;\mbox{for all}\;\;\mathbf{x},\mathbf{y} \in \mathbb \F^n,\; \lambda \in \F,
\end{equation}
then there exists a unique linear map $T_{\rad} \colon \F^n/\rad(P) \to \F^n/\rad(P)$ preserving $P_{\rad(P)}$ such that
\begin{equation*}
\pi_{\rad(P)} \circ \phi = \pi_{\rad(P)} \circ \psi = T_{\rad(P)} \circ \pi_{\rad(P)}. 
\end{equation*}
\end{theorem}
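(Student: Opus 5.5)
Let $d=\deg(P)$. The plan is to expand both sides of \eqref{thm:homogenphipsiintro:eqq1} as polynomials in $\lambda$ and compare coefficients. Since $|\F|>d$, two polynomials in $\lambda$ of degree at most $d$ that agree on all of $\F$ are identical. Write
$$P(\mathbf x+\lambda\mathbf y)=\sum_{j=0}^{d}\lambda^j P_j(\mathbf x,\mathbf y),$$
where $P_j$ is homogeneous of degree $d-j$ in $\mathbf x$ and of degree $j$ in $\mathbf y$. In particular $P_0(\mathbf x,\mathbf y)=P(\mathbf x)$, $P_d(\mathbf x,\mathbf y)=P(\mathbf y)$, and
$$P_1(\mathbf x,\mathbf y)=\frac{\partial P}{\partial \mathbf y}(\mathbf x).$$
Comparing coefficients then gives:
\begin{itemize}
\item $P(\phi(\mathbf x))=P(\mathbf x)$ and $P(\psi(\mathbf y))=P(\mathbf y)$;
\item for every $\mathbf x,\mathbf y$, the key identity
$$\frac{\partial P}{\partial \psi(\mathbf y)}(\phi(\mathbf x))=\frac{\partial P}{\partial \mathbf y}(\mathbf x).$$
\end{itemize}

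\textbf{Step 1: linearity modulo the annihilator.} Let $\mathcal L_P^{\perp}\subseteq\F^n$ be the common kernel of all functionals in $\mathcal L_P$. We have $\rad(P)\subseteq\mathcal L_P^{\perp}$: if $\mathbf r\in\rad(P)$, then $P(\mathbf a+\lambda\mathbf r)$ is constant in $\lambda$, so its $\lambda$-coefficient $\frac{\partial P}{\partial\mathbf r}(\mathbf a)$ vanishes. The dimension hypothesis says exactly that $\dim\mathcal L_P^{\perp}=n-\dim\mathcal L_P=\dim\rad(P)$, so $\mathcal L_P^{\perp}=\rad(P)$. This is where the hypothesis enters; in characteristic zero it holds automatically, via Euler's identity and Taylor expansion.

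Now define $\ell_{\mathbf a}(\mathbf v)=\frac{\partial P}{\partial\mathbf v}(\mathbf a)$. These functionals span $\mathcal L_P$. The key identity says $\ell_{\phi(\mathbf x)}(\psi(\mathbf y))=\ell_{\mathbf x}(\mathbf y)$ for all $\mathbf x,\mathbf y$. Choose $\mathbf a_1,\dots,\mathbf a_m$ with $\ell_{\mathbf a_i}$ a basis of $\mathcal L_P$, and set $\mathbf b_i=\phi(\mathbf a_i)$. Then the vector $(\ell_{\mathbf b_i}(\psi(\mathbf y)))_i=(\ell_{\mathbf a_i}(\mathbf y))_i$ depends linearly on $\mathbf y$.

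\textbf{Step 2: the $\ell_{\mathbf b_i}$ span $\mathcal L_P$.} Suppose $\sum c_i\ell_{\mathbf b_i}=0$. Then $\sum c_i\ell_{\mathbf a_i}(\mathbf y)=0$ for every $\mathbf y$, so all $c_i=0$. Hence the $\ell_{\mathbf b_i}$ are $m$ independent elements of $\mathcal L_P$, and therefore form a basis.

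\textbf{Step 3: the induced map $T$.} By Step 2, the class of $\psi(\mathbf y)$ in $\F^n/\mathcal L_P^{\perp}=\F^n/\rad(P)$ is determined by the values $\ell_{\mathbf b_i}(\psi(\mathbf y))=\ell_{\mathbf a_i}(\mathbf y)$. These values depend linearly on $\mathbf y$ and only on $\pi(\mathbf y)$. Hence there is a linear map $T\colon\F^n/\rad(P)\to\F^n/\rad(P)$ with $\pi\circ\psi=T\circ\pi$, namely the unique one satisfying $\ell_{\mathbf b_i}\circ T=\ell_{\mathbf a_i}$ on the quotient. The map $T$ is injective, hence invertible.

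\textbf{Step 4: $\phi$ induces the same map.} This is the main obstacle. Put $\mathbf y'=\psi(\mathbf y)$. The key identity becomes
$$\ell_{\phi(\mathbf x)}(\mathbf y')=\ell_{\mathbf x}(T^{-1}\mathbf y')\quad\text{for all }\mathbf y'\in\psi(\F^n).$$
Since $T$ is surjective, $\pi(\psi(\F^n))$ is all of the quotient, and each $\ell$ vanishes on $\rad(P)$. So the identity holds for all $\mathbf y'$, i.e. $\ell_{\phi(\mathbf x)}=\ell_{\mathbf x}\circ T^{-1}$.

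Next we show $T$ preserves $P_{\rad(P)}$. Pick $\mathbf y_0$ in each class; then $P_{\rad}(T\pi\mathbf y_0)=P(\psi(\mathbf y_0))=P(\mathbf y_0)$. Consequently $\ell_{T\mathbf x}=\ell_{\mathbf x}\circ T^{-1}$ as well, by differentiating the polynomial identity $P_{\rad}\circ T=P_{\rad}$ over the quotient. This step needs care when $|\F|$ is finite; I would use the directional-derivative form through $\lambda$-coefficients, which is valid because $|\F|>d$.

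It follows that $\ell_{\phi(\mathbf x)}=\ell_{T\mathbf x}$, i.e. $\phi(\mathbf x)-T\mathbf x$ lies in the kernel of $\mathbf a\mapsto\ell_{\mathbf a}$. To conclude $\phi(\mathbf x)-T\mathbf x\in\rad(P)$, I would compare $P(\phi(\mathbf x)+\lambda\mathbf v)$ with $P(T\mathbf x+\lambda\mathbf v)$ using the remaining coefficients $P_j$ of the original identity, which relate all mixed derivatives and not only the first one. Then I would show that the difference lies in $\mathcal L_P^{\perp}=\rad(P)$. This last part is the delicate point, and I expect it to use the full coefficient comparison.

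\textbf{Uniqueness.} $T$ is unique because $\pi\circ\psi$ determines it: $\pi$ is surjective.
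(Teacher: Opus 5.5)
\textbf{Gap in Step 4.} The gap is at the end of Step 4, which is exactly where $\pi_{\rad(P)}\circ\phi=T\circ\pi_{\rad(P)}$ has to be proved.

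Your identities $\ell_{\phi(\mathbf x)}=\ell_{\mathbf x}\circ T^{-1}=\ell_{T\mathbf x}$ are correct. The next inference, that $\phi(\mathbf x)-T\mathbf x$ lies in the kernel of $\mathbf a\mapsto\ell_{\mathbf a}$, treats the gradient map as linear. It is a polynomial map of degree $d-1$, so for $d\ge 3$ equal gradients at two points say nothing about the gradient at their difference. For example, with $P=x^3$ in characteristic $\neq 2,3$ you have $\ell_{1}=\ell_{-1}$ but $\ell_{2}\neq 0$.

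First-order data is also the wrong target. The set $\{\mathbf a:\ell_{\mathbf a}=0\}$ is not $\mathcal L_P^{\perp}$: for $P=\det$ on $3\times 3$ matrices every rank-one matrix has zero gradient, while $\rad(\det)=\{0\}$. What follows in your Step 4 (comparing $P(\phi(\mathbf x)+\lambda\mathbf v)$ with $P(T\mathbf x+\lambda\mathbf v)$ ``using the remaining coefficients'') is announced rather than carried out. So $\pi_{\rad(P)}\circ\phi=\pi_{\rad(P)}\circ\psi$ is not established.

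\textbf{How to close it.} The missing step needs no derivative information. It uses the full identity at $\lambda=1$ together with homogeneity; this is in substance the paper's Claim~(iv) in the proof of Theorem~\ref{thm:homogenphipsi}.

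Let $\tilde T$ be a linear lift of $T$. Since $\psi(\mathbf y)-\tilde T\mathbf y\in\rad(P)$, and $P\circ\tilde T=P$ because $T$ preserves $P_{\rad(P)}$, the hypothesis gives $P(\phi(\mathbf x)+\lambda\tilde T\mathbf y)=P(\mathbf x+\lambda\mathbf y)=P(\tilde T\mathbf x+\lambda\tilde T\mathbf y)$.

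Take $\lambda=1$. Since $\tilde T\mathbf y$ runs through all of $\F^n$ modulo $\rad(P)$, you get $P(\mathbf a+\mathbf w)=P(\mathbf a)$ for all $\mathbf a$, where $\mathbf w=\phi(\mathbf x)-\tilde T\mathbf x$. Homogeneity then gives, for $\mu\neq 0$, $P(\mathbf a+\mu\mathbf w)=\mu^{d}P(\mu^{-1}\mathbf a+\mathbf w)=\mu^{d}P(\mu^{-1}\mathbf a)=P(\mathbf a)$. Hence $\mathbf w\in\rad(P)$.

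\textbf{The rest of the proposal.} With this replacement the rest is sound. Steps 1--3 are the paper's Lemma~\ref{lem:PPhiPsi}; your linear independence of the $\ell_{\mathbf b_i}$ plays the role of invertibility of the matrix $C$. Reading $P\circ\psi=P$ directly off the $\lambda^{d}$-coefficient is a genuine shortcut. The paper instead obtains it by first proving $\phi(\mathbf 0)\in\rad(P)$ via Lemma~\ref{lem:StrangeCondImplyRadP}.
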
 

To establish Theorem~\ref{thm:homogenphipsiintro}, we adopt the beautiful argument of Tan and Wang to the case where $\rad(P)$ could be arbitrary and show that $\psi$ is a bijective linear map when it is considered modulo $\rad(P)$ (see Lemma~\ref{lem:PPhiPsi}). The proof employs the properties of $\mathcal L_P$ which are studied in Section~\ref{sec:linspace}. In particular, we show that $\rad(P)$ belongs to the intersection of kernels of all elements of $\mathcal L_P$ (see Lemma~\ref{lem:RadAnnLP}) and the converse inclusion holds if and only if $\dim (\mathcal L_P) + \dim (\rad(P)) = n$ (see Lemma~\ref{lem:dzeroeqk}). Then, using the homogeneity of $P$, we establish the required proposition. In addition, the properties of $\mathcal L_P$ discussed in Section~\ref{sec:linspace} could be applied for finding $\rad(P)$. Consequently, the notion of $\mathcal L_P$ is useful in the theory of linear maps preserving matrix invariants (see Remark~\ref{rem:LPUseful}).

%
%
%

Accordingly, the condition~\eqref{thm:homogenphipsiintro:eqq1} is restrictive enough to imply that the considered maps are linear modulo $\rad(P)$. In general, for a given polynomial $P \in \F[x_1,\ldots, x_n]$, the condition $P(\phi(\mathbf x)) = P(\mathbf x)$ for all $\mathbf x \in \F^n$ does not imply that $\phi$ is linear modulo $\rad(P)$ (consider for example the case when $\F = \mathbb R$, $P = x^2$, $\phi(x) = |x|$). On the other hand, in certain cases it is possible to characterize nonlinear maps preserving the zero set of certain matrix polynomials. For instance, in~\cite{SEMRL20081051} the corresponding characterisation is provided for commutativity preserving continuous maps $\phi$ on the set of square complex matrices. Furthermore, Guterman and Kuzma in~\cite{Guterman11112009} obtained the description of maps preserving the fixed polynomial belonging to a wide class of matrix polynomials without any preliminary assumption on $\phi$.

%
%
%

Beyond obtaining general results discussed above, we demonstrate in Section~\ref{sec:AppToCullisDet} how they are applied in order to extend the Costara's theorem to the particular polynomial matrix invariants, namely to the Cullis' determinant (Theorem~\ref{thm:homogenphipsiDetNKEven} and Theorem~\ref{thm:homogenphipsiDetNKOdd}). The Cullis' determinant was introduced by Cullis in~1913 in his monograph~\cite{cullis1913}. It is denoted by $\det_{n\,k}$ and extends the notion of the ordinary determinant for rectangular matrices. That is, if $X \in \M_{n\,k}(\F)$, then $\det_{n\,k}(X)$ is defined as an alternating sum of the maximal minors of $X$ (Definition~\ref{def:DETNK}). The complete description of linear maps preserving determinant for the case if $|\F| > \deg(\det_{n\,k}) = k$ has been obtained recently in~\cite{Guterman2025,Guterman2025b,Guterman2025c}. Note that $\rad(\det_{n\,k})$ is nonzero if $n + k$ is odd (Lemma~\ref{lem:NKOddRadDetNK}) which implies that the maps $\phi$ and $\psi$ in~\eqref{thm:homogenphipsiintro:eqq1} could be nonsurjective (see Proposition~1 in~\cite{Waterhouse1983} for the case if $\phi$ and $\psi$ are linear and $\phi = \psi$).

This paper is organized as follows: in Section~\ref{sec:notation} we introduce the notation used throughout this paper; in Section~\ref{sec:prelim} we provide the preliminary facts from algebra and the theory of radical of a function; in Section~\ref{sec:linspace} we introduce the vector space $\mathcal L_P$ associated with every polynomial $P \in \F[x_1,\ldots, x_n]$ and prove its properties; in Section~\ref{sec:factspres} we study the properties of pairs of maps satisfying the condition~\eqref{thm:homogenphipsiintro:eqq1} and prove the main theorem; in Section~\ref{sec:AppToCullisDet} we obtain the description of pairs of maps satisfying the condition~\eqref{thm:homogenphipsiintro:eqq1} for the case if\linebreak $P = \det_{n\,k}$. Finally, Section~\ref{sec:furthwork} contains our comments about the possible further work.

\section{Notation and basic definitions}
\label{sec:notation}
By $\F$ we denote a field without any restrictions on its characteristic and cardinality. 

We denote by $\M_{n\, k}(\F)$ the set of all $n\times k$ matrices with the entries from the field $\F.$ 
$I_{n\, n} = I_{n} \in \M_{n\, n}(\F)$ denotes an identity matrix. By $E_{ij} \in \M_{n\, k}(\mathbb F)$ we denote a matrix whose entries are all equal to zero besides the entry on the intersection of the $i$-th row and the $j$-th column, which is equal to one. By $x_{i\, j}$ we denote the element of a matrix $X$ lying on the intersection of its $i$-th row and $j$-th column (the same convention is assumed for the matrices denoted by other letters, which are always latin capitals).  For integers $i, j$ we denote by $\delta_{i\,j}$  a \emph{Kronecker delta} of $i$ and $j$, which is equal to $1$ if $i = j$ and equal to $0$ otherwise. If $A \in \M_{n\,k}(\F),$ then by  $\colsp (A) \subseteq \F^n$ we denote a \emph{column space of} $A$.

We use the bold font to denote vectors and lower indices to denote their coordinates. In the case if we need the series of vectors, we use the upper indices placed in braces. For example, if $\mathbf v = \begin{pmatrix}1 \\ 0\end{pmatrix}$, then $\mathbf v^t = \begin{pmatrix}1 & 0\end{pmatrix}$ and $\mathbf v_1 = 1$. If $\mathbf u^{(1)} = \begin{pmatrix}1 \\ 0\end{pmatrix}, \mathbf u^{(2)} =  \begin{pmatrix}0 \\ 1\end{pmatrix}$, then $\mathbf u^{(1)}_1 = 1$ and $\mathbf u^{(2)}_1 = 0$.

\section{Preliminaries}
\label{sec:prelim}

Recall the following general facts regarding polynomials over a field.

\begin{lemma}[{Cf.~\cite[IV, \S 1, Corollary 1.7.]{Lang2002}}]Let $\F$ be an infinite field. Let $f$ be a
polynomial in $n$ variables over $\F$. If $f$ defines the zero function on $\F^{n}$, then $f = 0$.
\end{lemma}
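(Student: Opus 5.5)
We argue by induction on the number of variables $n$. For $n = 1$, the polynomial $f \in \F[x_1]$ vanishes at every element of $\F$. A nonzero polynomial of degree $d$ in one variable has at most $d$ roots, which follows from the division algorithm: each root $a$ contributes a factor $x_1 - a$. Since $\F$ is infinite, $f$ has infinitely many roots, so $f$ must be the zero polynomial.

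For the inductive step, assume the claim for polynomials in $n-1$ variables. Given $f \in \F[x_1,\ldots,x_n]$ vanishing on all of $\F^n$, we expand it in powers of the last variable:
$$f = \sum_{i=0}^{d} g_i(x_1,\ldots,x_{n-1})\, x_n^{i}, \qquad g_i \in \F[x_1,\ldots,x_{n-1}].$$
Fix an arbitrary point $(a_1,\ldots,a_{n-1}) \in \F^{n-1}$. The one-variable polynomial $\sum_i g_i(a_1,\ldots,a_{n-1})\, x_n^i \in \F[x_n]$ vanishes for every value $x_n \in \F$. By the case $n=1$, each coefficient $g_i(a_1,\ldots,a_{n-1})$ equals $0$.

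Since the point was arbitrary, every $g_i$ defines the zero function on $\F^{n-1}$. The induction hypothesis then gives $g_i = 0$ as a polynomial for every $i$, hence $f = 0$.

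No step here is a real obstacle. The only point that needs care is distinguishing a polynomial as a formal object from the function it induces on $\F^n$. The one-variable root bound is exactly where infiniteness of $\F$ is used: over $\F_q$ the polynomial $x^q - x$ is nonzero yet vanishes everywhere. This is also why the paper later needs the hypothesis $|\F| > \deg(P)$ instead. In that setting the same induction, using a root count at most $\deg f$ in each variable, shows that a polynomial of degree less than $|\F|$ that vanishes on $\F^n$ is zero.
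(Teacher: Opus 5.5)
Your proof is correct. The paper gives no proof of its own and only cites Lang, where the result is proved by essentially this argument: induction on the number of variables, expanding in the last variable and applying the one-variable bound that a nonzero polynomial has at most $\deg$ roots.
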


\begin{lemma}[{Cf.~\cite[IV, \S 1, Corollary 1.8]{Lang2002}}]Let $\F$ be a finite field with $q$ elements. Let $f$ be a
polynomial in $n$ variables over $\F$ such that the degree of $f$ in each variable
is less than $q$. If $f$ defines the zero function on $\F^{n}$, then $f = 0$.
\end{lemma}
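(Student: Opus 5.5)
We prove the statement by induction on the number of variables $n$, using the fact that a nonzero polynomial in one variable of degree at most $q-1$ has at most $q-1$ roots in $\F$. Since $|\F| = q$, such a polynomial cannot vanish at every point of $\F$.

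For $n = 1$, write $f = \sum_{i=0}^{d} c_i x_1^i$ with $d < q$. If $f$ vanishes on all of $\F$, then $f$ has $q > d$ roots. A nonzero polynomial of degree $d$ has at most $d$ roots, because each root $a$ splits off a factor $(x_1 - a)$ by division in $\F[x_1]$. Hence $f = 0$.

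For the inductive step, suppose the claim holds for $n-1$ variables. Write
\[
f(x_1,\ldots,x_n) = \sum_{i=0}^{d} g_i(x_1,\ldots,x_{n-1})\, x_n^i,
\]
where $d < q$ and each $g_i$ has degree less than $q$ in every variable, since it inherits the degree bounds of $f$. Fix any $\mathbf a \in \F^{n-1}$. The one-variable polynomial $t \mapsto \sum_i g_i(\mathbf a)\, t^i$ has degree less than $q$ and vanishes on all of $\F$. By the base case all its coefficients are zero, so $g_i(\mathbf a) = 0$ for every $i$. As $\mathbf a$ was arbitrary, each $g_i$ defines the zero function on $\F^{n-1}$. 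The induction hypothesis then gives $g_i = 0$ for every $i$, and therefore $f = 0$.

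There is no real obstacle here. The one point that needs care is checking that the coefficients $g_i$ still satisfy the degree hypothesis in each remaining variable, which they do because they are formed from monomials of $f$. Note that the bound must hold in each variable separately, not only on the total degree: for example, $x^q - x$ vanishes on $\F_q$ but is not the zero polynomial.
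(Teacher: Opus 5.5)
Your proof is correct and is the standard induction on the number of variables: the one-variable root count gives the base case, and the inductive step expands $f$ in $x_n$ and specializes the first $n-1$ coordinates. This is exactly the argument in Lang, IV, \S 1, which the paper cites instead of giving its own proof. One small point concerns only your closing aside, not the proof. A total-degree bound $<q$ already implies the per-variable bound, and $x^q - x$ shows only that the threshold $q$ cannot be raised, not anything about per-variable versus total degree.
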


\begin{corollary}\label{DegFLessZero}Let $\F$ be an arbitrary field. Let $f$ be a
polynomial in $n$ variables over $\F$ such that either $\F$ is infinite or the degree of $f$ in each variable
is less than $|\F|$. If $f$ defines the zero function on $\F^{n}$, then $f = 0$.
\end{corollary}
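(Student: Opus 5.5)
The plan is to split into the two cases that appear in the hypothesis and reduce each to one of the two preceding lemmas (Lang, IV, \S 1, Corollaries 1.7 and 1.8). No further argument is needed beyond checking that the hypotheses line up.

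\emph{Case 1: $\F$ is infinite.} The hypothesis says that $f$ defines the zero function on $\F^n$. Corollary~1.7 then gives $f = 0$ directly, with no restriction on degrees.

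\emph{Case 2: $\F$ is finite.} Write $q = |\F|$. The hypothesis of the corollary now forces the second alternative: the degree of $f$ in each variable is less than $|\F| = q$. Together with the assumption that $f$ vanishes as a function on $\F^n$, these are exactly the hypotheses of Corollary~1.8, so $f = 0$.

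The only thing to watch is the disjunction in the hypothesis. When $\F$ is finite, the first alternative ("$\F$ is infinite") cannot hold, so the degree bound is indeed available in Case~2. This bookkeeping is the only delicate point; there is no real obstacle.

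For completeness, one could instead prove both lemmas at once by induction on $n$. Write $f = \sum_i g_i(x_1,\ldots,x_{n-1})\,x_n^i$. For each fixed point of $\F^{n-1}$, this is a univariate polynomial in $x_n$ with more roots than its degree: infinitely many roots in Case~1, or $q$ roots against degree at most $q-1$ in Case~2. So each $g_i$ vanishes on $\F^{n-1}$, and by the induction hypothesis each $g_i = 0$, hence $f = 0$. However, citing the two lemmas as stated suffices.
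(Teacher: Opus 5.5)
Your proposal is correct and matches the paper, which states this corollary without proof as the immediate combination of the two preceding lemmas (Lang's Corollaries 1.7 and 1.8), split into exactly your infinite/finite cases. Your optional inductive sketch is also fine, provided you note that in the finite case each coefficient $g_i$ inherits the bound $\deg_{x_j} g_i \le \deg_{x_j} f < |\F|$, so the induction hypothesis applies to it.
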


\begin{corollary}\label{DegFGLessEqual}Let $f, g$ be a
polynomials in $n$ variables over $\F$ such that either $\F$ is infinite or the degree of $f$ and $g$ in each variable
is less than $|\F|$. If $f$ and $g$ define the same function on $\F^{n}$, then $f = g$.
\end{corollary}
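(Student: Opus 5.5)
The plan is to apply Corollary~\ref{DegFLessZero} to the difference $h = f - g$. This polynomial is well defined in the same $n$ variables over $\F$, so the only work is to check that $h$ satisfies the hypotheses of that corollary.

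First, the hypothesis on degrees passes to $h$. If $\F$ is infinite, there is nothing to check. If $\F$ is finite, fix a variable $x_i$. Any monomial of $h$ appears with nonzero coefficient in $f$ or in $g$, so the degree of $h$ in $x_i$ is at most the maximum of the degrees of $f$ and $g$ in $x_i$. Both of those are less than $|\F|$ by assumption, hence so is the degree of $h$ in $x_i$.

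Second, $h$ defines the zero function on $\F^n$. Evaluation at a point $\mathbf a \in \F^n$ is a ring homomorphism $\F[x_1,\ldots,x_n] \to \F$, so $h(\mathbf a) = f(\mathbf a) - g(\mathbf a) = 0$ for every $\mathbf a$, because $f$ and $g$ agree as functions.

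Corollary~\ref{DegFLessZero} now gives $h = 0$ as a polynomial, that is, $f = g$. The only step needing any care is the degree bound for $h$ in the finite-field case, and it is immediate once one notes that subtraction cannot create monomials that appear in neither $f$ nor $g$.
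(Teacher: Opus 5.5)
Your proof is correct and matches the paper's intended argument: the paper states this corollary without proof, as an immediate consequence of Corollary~\ref{DegFLessZero} applied to $f - g$, which is what you do. Your check that the per-variable degree bound carries over to $f-g$ is the only detail that needs stating, and you handle it correctly.
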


The notion of the radical of function is introduced following~\cite{Waterhouse1983}.

\begin{definition}[Cf.~{\cite[text at the beginning of Section~1]{Waterhouse1983}}]\label{def:radical}Let $\F$ be a field, $V$ be a finite-dimensional vector space over $\F$, and let $f$ be a function from $V$ to $\F$. The \emph{radical} of $f$, denoted by $\rad (f)$ is a subset of $V$ defined by
\[
\rad (f) = \{\mathbf w \mid f(\mathbf v + \lambda \mathbf w) = f(\mathbf v)\;\;\mbox{for all}\;\; \mathbf v \in V,\;\; \lambda \in \F\}.
\]
\end{definition}

\begin{definition}[Cf.~{\cite[text at the beginning of Section~1]{Waterhouse1983}}]\label{def:radicalfunc}If $\rad(f)$ is the radical of $f$ and $\pi_{\rad(f)}\colon V \to V / \rad(f)$ is the canonical projection, then $f_{\rad(f)}$ defined as a unique function from $V / \rad(f)$  to $\F$ with trivial radical such that $f = f_{\rad(f)}\circ \pi_{\rad(f)}$.
\end{definition}

The following technical facts involving $\rad(f)$ are used in the proof of Theorem~\ref{thm:homogenphipsi}.

\begin{lemma}\label{lem:StrangeCondImplyRadP}
Assume that $|\F| > 2$, $V$ is vector space over $\F$ and $f\colon V \to \F$ be a function. Let $\mathbf x \in V$. If  
\begin{equation}\label{lem:StrangeCondImplyRadP:eqq1}
f(\mathbf a + \lambda \mathbf x) = f(\mathbf a + \mathbf x)\quad\mbox{for all}\;\;\mathbf a \in V,\, 0 \neq \lambda \in \F, 
\end{equation}
then $\mathbf x \in \rad (f)$.
\end{lemma}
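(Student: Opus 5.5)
The plan is to exploit the given invariance twice: once at the base point $\mathbf a$ and once at the shifted base point $\mathbf a - \mathbf x$. Combining the two, for a suitably chosen scalar, gives $f(\mathbf a + \mathbf x) = f(\mathbf a)$. By Definition~\ref{def:radical}, we must show $f(\mathbf a + \lambda \mathbf x) = f(\mathbf a)$ for all $\mathbf a \in V$ and $\lambda \in \F$. The case $\lambda = 0$ is trivial. For $\lambda \neq 0$, hypothesis~\eqref{lem:StrangeCondImplyRadP:eqq1} gives $f(\mathbf a + \lambda\mathbf x) = f(\mathbf a + \mathbf x)$. So the whole statement reduces to showing $f(\mathbf a + \mathbf x) = f(\mathbf a)$ for every $\mathbf a \in V$.

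First, apply~\eqref{lem:StrangeCondImplyRadP:eqq1} with $\mathbf a$ replaced by $\mathbf a - \mathbf x$. This gives $f(\mathbf a + (\lambda - 1)\mathbf x) = f(\mathbf a)$ for every $0 \neq \lambda \in \F$. Writing $\nu = \lambda - 1$, we get $f(\mathbf a + \nu \mathbf x) = f(\mathbf a)$ for all $\nu \in \F$ with $\nu \neq -1$.

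Second, since $|\F| > 2$, we can choose $\nu \in \F$ with $\nu \notin \{0, -1\}$. This set has at most two elements, and when $\Char(\F) = 2$ it is the single element $0$. For this $\nu$, the previous step gives $f(\mathbf a + \nu\mathbf x) = f(\mathbf a)$. Since $\nu \neq 0$, hypothesis~\eqref{lem:StrangeCondImplyRadP:eqq1} also gives $f(\mathbf a + \nu \mathbf x) = f(\mathbf a + \mathbf x)$. Hence $f(\mathbf a + \mathbf x) = f(\mathbf a)$, which completes the reduction above.

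The only delicate point is the choice of $\nu$. The naive choice $\nu = 1$ (i.e.\ $\lambda = 2$) fails in characteristic $2$, where $1 = -1$. The hypothesis $|\F| > 2$ is exactly what guarantees a scalar outside $\{0,-1\}$. Beyond this, the argument is purely formal and uses no structure of $f$.
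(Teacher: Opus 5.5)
Your proof is correct, but it is organized differently from the paper's. The paper proves $f(\mathbf a + \mathbf x) = f(\mathbf a)$ by splitting into two cases:
\begin{itemize}
\item $\Char(\F) \neq 2$: it applies the hypothesis at base point $\mathbf a$ with $\lambda = 2$ and at $\mathbf a + \mathbf x$ with $\lambda = -1$.
\item $\Char(\F) = 2$: it picks $\kappa \notin \{0,1\}$, applies the hypothesis at $\mathbf a + \kappa\mathbf x$ with $\lambda = \kappa+1$ and at $\mathbf a + \mathbf x$ with $\lambda = \kappa$, and uses $2\mathbf x = \mathbf 0$.
\end{itemize}
You shift the base point once to $\mathbf a - \mathbf x$, obtaining $f(\mathbf a + \nu\mathbf x) = f(\mathbf a)$ for all $\nu \neq -1$. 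A single $\nu \notin \{0,-1\}$ then finishes the argument in every characteristic. In characteristic $2$ the paper's chain passes through the value $f(\mathbf a + (\kappa+1)\mathbf x)$, so it is essentially your argument with $\nu = \kappa + 1$, reached less directly.

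Your version gives a uniform, case-free proof and shows exactly where $|\F| > 2$ is used: the existence of a scalar outside $\{0,-1\}$. The paper's version writes out each chain of equalities explicitly, at the cost of the case split and the extra identity $2\mathbf x = \mathbf 0$.

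One small correction: your aside that $\{0,-1\}$ is the single element $0$ in characteristic $2$ is wrong. There $-1 = 1 \neq 0$, so the set is $\{0,1\}$; in any field it has exactly two elements, as your own final paragraph implicitly acknowledges. This does not affect the argument, which only needs the set to have at most two elements.
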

\begin{proof}Let $\mathbf x \in \F^n$ be such that the equality~\eqref{lem:StrangeCondImplyRadP:eqq1} holds for all $\mathbf a \in \F^n,\, 0 \neq \lambda \in \F$. We show that 
\begin{equation}\label{lem:StrangeCondImplyRadP:eq1}
f(\mathbf a + \mathbf x) = f(\mathbf a)\;\;\mbox{for all}\;\; \mathbf a \in V.
\end{equation}
The following two cases will be considered separately: (I) $\Char(\F) = 2$ and (II) $\Char(\F) \neq 2$.

\paragraph{Case (I): $\Char(\F) = 2$} Since $|\F| > 2,$ then there exists $\kappa \in \F$ such that $\kappa \neq 0$ and $\kappa \neq 1$. Then 
\begin{multline*}
f(\mathbf a + \mathbf x) = f(\mathbf a + (\kappa + (\kappa + 1))\mathbf x) = f(\mathbf a + \kappa\mathbf x + (\kappa + 1)\mathbf x)
= f(\left(\mathbf a + \kappa\mathbf x\right) + (\kappa + 1)\mathbf x)\\
\overset{\scriptsize \mbox{\eqref{lem:StrangeCondImplyRadP:eqq1} for}\;\mathbf a + \kappa\mathbf x \;\mbox{and}\; \kappa + 1 \neq 0}{=\joinrel=\joinrel=\joinrel=\joinrel=\joinrel=\joinrel=\joinrel=\joinrel=\joinrel=\joinrel=\joinrel=\joinrel=\joinrel=\joinrel=} f(\left(\mathbf a + \kappa\mathbf x\right) + \mathbf x) =  f(\left(\mathbf a + \mathbf x\right) + \kappa\mathbf x)\\
\overset{\scriptsize \mbox{\eqref{lem:StrangeCondImplyRadP:eqq1} for}\;\mathbf a + \mathbf x \;\mbox{and}\; \kappa \neq 0}{=\joinrel=\joinrel=\joinrel=\joinrel=\joinrel=\joinrel=\joinrel=\joinrel=\joinrel=\joinrel=\joinrel=\joinrel=} f(\left(\mathbf a + \mathbf x\right) + \mathbf x)
= f(\mathbf a + \left(\mathbf x + \mathbf x\right)) =  f(\mathbf a)
\end{multline*}
which establishes the equality~\eqref{lem:StrangeCondImplyRadP:eq1}.
\paragraph{Case (II): $\Char(\F) \neq 2$} Then $2 \neq 0$ and consequently
\begin{multline*}
f(\mathbf a + \mathbf x) \overset{\scriptsize \mbox{\eqref{lem:StrangeCondImplyRadP:eqq1} for}\;\mathbf a \;\mbox{and}\; 2 \neq 0}{=\joinrel=\joinrel=\joinrel=\joinrel=\joinrel=\joinrel=\joinrel=\joinrel=\joinrel=\joinrel=\joinrel=\joinrel=\joinrel=\joinrel=} f(\mathbf a + 2\mathbf x) = f(\left(\mathbf a + \mathbf x\right) + \mathbf x)\\
 \overset{\scriptsize \mbox{\eqref{lem:StrangeCondImplyRadP:eqq1} for}\;\mathbf a + \mathbf x \;\mbox{and}\; -1 \neq 0}{=\joinrel=\joinrel=\joinrel=\joinrel=\joinrel=\joinrel=\joinrel=\joinrel=\joinrel=\joinrel=\joinrel=\joinrel=\joinrel=\joinrel=} f(\left(\mathbf a + \mathbf x\right) + (-1)\mathbf x) = f(\mathbf a).
\end{multline*}
This establishes the equality~\eqref{lem:StrangeCondImplyRadP:eq1}.\bigskip

Returning to the statement of the lemma, we show that $\mathbf x \in \rad(f)$ by the definition of $\rad(f)$. Indeed, the equality
\begin{equation}\label{lem:StrangeCondImplyRadP:eq2}
f(\mathbf a + \lambda\mathbf x) = f(\mathbf a)
\end{equation}
clearly holds for all $\mathbf a \in V$ and $\lambda = 0$. If $\lambda \neq 0$, then~\eqref{lem:StrangeCondImplyRadP:eq2} follows from equalities~\eqref{lem:StrangeCondImplyRadP:eqq1} and~\eqref{lem:StrangeCondImplyRadP:eq1} aligned together.

Thus, the equality~\eqref{lem:StrangeCondImplyRadP:eq2} holds for all $\mathbf a \in V$ and $\lambda \in \F$. Therefore, $\mathbf x \in \rad(f)$.
\end{proof}

\begin{remark}
 If $|\F| = 2$, then the equality~\eqref{lem:StrangeCondImplyRadP:eqq1} holds for all $\mathbf x \in V$ and the statement of Lemma~\ref{lem:StrangeCondImplyRadP} does not hold. Indeed, it does not hold in particular for $f = \id \colon \F \to \F$ and $\mathbf x = 1 \in \F$.
\end{remark}

\begin{lemma}\label{lem:TwoCondLift}
Let $V$ be a vector space over $\F$ and $f\colon V \to \F$ be a function, $\phi, \psi \colon V \to V$ be maps such that
\begin{equation}\label{lem:TwoCondLift:eqq1}
f(\mathbf x + \lambda \mathbf y) = f(\phi(\mathbf x) + \lambda (\mathbf y))\quad\mbox{for all}\;\;\mathbf x, \mathbf y \in V, \lambda \in \F,
\end{equation}
$\psi_{\rad(f)}\colon V/\rad(f) \to V/\rad(f)$ be a map satisfying
\begin{equation}\label{lem:TwoCondLift:eqq2}
\psi_{\rad(f)} \circ \pi_{\rad(f)} = \pi_{\rad(f)} \circ \psi. 
\end{equation}
Then
\begin{equation*}\label{lem:TwoCondLift:eqq3}
f_{\rad(f)}(\pi_{\rad(f)}(\mathbf x) + \lambda \mathbf y) = f_{\rad(f)}(\pi_{\rad(f)}(\phi(\mathbf x)) + \lambda \psi_{\rad(f)}(\mathbf y))
\end{equation*}
for all $\mathbf x \in V, \mathbf y \in V/\rad(f), \lambda \in \F.$
\end{lemma}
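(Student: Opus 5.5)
The argument is a direct unwinding of the definitions: choose a lift of $\mathbf y$ to $V$, pass the quotient expressions down to $f$ via $f = f_{\rad(f)}\circ\pi_{\rad(f)}$, apply the hypothesis, and push back up. Only two facts are used: $\pi_{\rad(f)}$ is linear, since $\rad(f)$ is a subspace as noted in the introduction, and $f = f_{\rad(f)}\circ \pi_{\rad(f)}$ by Definition~\ref{def:radicalfunc}.

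I read hypothesis \eqref{lem:TwoCondLift:eqq1} as $f(\mathbf x + \lambda \mathbf y) = f(\phi(\mathbf x) + \lambda \psi(\mathbf y))$. The displayed $\lambda(\mathbf y)$ must be a typo for $\lambda\psi(\mathbf y)$, because $\psi$ appears in \eqref{lem:TwoCondLift:eqq2} and in the conclusion. This matches condition \eqref{thm:homogenphipsiintro:eqq1}.

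\emph{Step 1 (lift).} Fix $\mathbf x \in V$, $\mathbf y \in V/\rad(f)$ and $\lambda \in \F$. Choose $\mathbf y' \in V$ with $\pi_{\rad(f)}(\mathbf y') = \mathbf y$; this is possible because the canonical projection is surjective.

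\emph{Step 2 (go down to $f$).} By linearity of $\pi_{\rad(f)}$,
$$\pi_{\rad(f)}(\mathbf x) + \lambda \mathbf y = \pi_{\rad(f)}(\mathbf x + \lambda \mathbf y').$$
Hence the left-hand side of the claim equals $f_{\rad(f)}(\pi_{\rad(f)}(\mathbf x+\lambda\mathbf y')) = f(\mathbf x + \lambda \mathbf y')$.

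\emph{Step 3 (apply the hypothesis).} By \eqref{lem:TwoCondLift:eqq1}, this equals $f(\phi(\mathbf x) + \lambda\psi(\mathbf y'))$.

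\emph{Step 4 (go back up).} Using $f = f_{\rad(f)}\circ \pi_{\rad(f)}$ again and linearity of $\pi_{\rad(f)}$, this becomes
$$f_{\rad(f)}\bigl(\pi_{\rad(f)}(\phi(\mathbf x)) + \lambda\, \pi_{\rad(f)}(\psi(\mathbf y'))\bigr).$$
By \eqref{lem:TwoCondLift:eqq2}, $\pi_{\rad(f)}(\psi(\mathbf y')) = \psi_{\rad(f)}(\pi_{\rad(f)}(\mathbf y')) = \psi_{\rad(f)}(\mathbf y)$, which gives the claimed identity.

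There is no real obstacle. The only subtlety is that the right-hand side must not depend on the choice of the lift $\mathbf y'$. This is exactly why the lemma assumes \eqref{lem:TwoCondLift:eqq2}: it makes $\pi_{\rad(f)}\circ\psi$ factor through $\pi_{\rad(f)}$, so the final expression involves only $\mathbf y$. I would add a sentence pointing this out.
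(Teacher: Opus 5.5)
Your proposal is correct and follows the paper's proof essentially step for step: lift $\mathbf y$ to some $\mathbf z$ with $\pi_{\rad(f)}(\mathbf z)=\mathbf y$, then chain $f = f_{\rad(f)}\circ\pi_{\rad(f)}$, linearity of $\pi_{\rad(f)}$, the hypothesis, and the intertwining relation. Your reading of the hypothesis as $f(\phi(\mathbf x)+\lambda\psi(\mathbf y))$ is also how the paper's proof actually uses it.
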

\begin{proof}Let $\mathbf x \in V, \mathbf y \in V/\rad(f), \lambda \in \F$ and $\mathbf z \in V$ be such that $\pi_{\rad(f)}(\mathbf z) = \mathbf y.$
Then
\begin{multline*}
f_{\rad(f)}(\pi_{\rad(f)}(\mathbf x) + \lambda \mathbf y) = f_{\rad(f)}(\pi_{\rad(f)}(\mathbf x) + \lambda \pi_{\rad(f)}(\mathbf z))\\
 \overset{\scriptsize\mbox{$\pi_{\rad(f)}$ is linear}}{=\joinrel=\joinrel=\joinrel=\joinrel=\joinrel=\joinrel=\joinrel=\joinrel=\joinrel=\joinrel=} f_{\rad(f)}(\pi_{\rad(f)}(\mathbf x + \lambda \mathbf z))
 = f_{\rad(f)}\circ \pi_{\rad(f)}(\mathbf x + \lambda \mathbf z)\\
\overset{\scriptsize\mbox{Definition of $f_{\rad(f)}$}}{=\joinrel=\joinrel=\joinrel=\joinrel=\joinrel=\joinrel=\joinrel=\joinrel=\joinrel=\joinrel=}
f(\mathbf x + \lambda \mathbf z)
\overset{\scriptsize\mbox{\eqref{lem:TwoCondLift:eqq1}}}{=\joinrel=\joinrel=\joinrel=}
f(\phi(\mathbf x) + \lambda \psi(\mathbf z))\\
\overset{\scriptsize\mbox{Definition of $f_{\rad(f)}$}}{=\joinrel=\joinrel=\joinrel=\joinrel=\joinrel=\joinrel=\joinrel=\joinrel=\joinrel=\joinrel=}
f_{\rad(f)}\circ \pi_{\rad(f)}(\phi(\mathbf x) + \lambda \psi(\mathbf z))\phantom{XXXXXXXXXXXX}\\
\overset{\scriptsize\mbox{$\pi_{\rad(f)}$ is linear}}{=\joinrel=\joinrel=\joinrel=\joinrel=\joinrel=\joinrel=\joinrel=\joinrel=\joinrel=\joinrel=}
f_{\rad(f)}\left(\pi_{\rad(f)} (\phi(\mathbf x)) + \lambda \pi_{\rad(f)}\circ \psi(\mathbf z))\right)\\
\overset{\scriptsize\mbox{\eqref{lem:TwoCondLift:eqq2}}}{=\joinrel=\joinrel=\joinrel=\joinrel=}
f_{\rad(f)}\left(\pi_{\rad(f)} (\phi(\mathbf x)) + \lambda \psi_{\rad(f)} \circ  \pi_{\rad(f)}(\mathbf z))\right)\\
\overset{\scriptsize\mbox{Definition of $\mathbf z$}}{=\joinrel=\joinrel=\joinrel=\joinrel=\joinrel=\joinrel=\joinrel=\joinrel=\joinrel=\joinrel=}
 f_{\rad(f)}(\pi_{\rad(f)}(\phi(\mathbf x)) + \lambda \psi_{\rad(f)}(\mathbf y))
\end{multline*} 
Thus, the equality~\eqref{lem:TwoCondLift:eqq3} holds for all $\mathbf x \in V, \mathbf y \in V/\rad(f), \lambda \in \F$.
\end{proof}

\begin{definition}[{Cf.~\cite[IV,\S 1, the definition on p.~178]{Lang2002}}]\label{def:onedimder}If $f(X) =
a_n x^n + \ldots + a_0 \in \F[x]$, then \textbf{derivative} of $f$ is a polynomial belonging to $\F[x]$ which is denoted by $f'$ and is defined by
\[
f'(X)= na_n x^{n-1} + \ldots + a_1.
\]
\end{definition}

\begin{lemma}[{Cf.~\cite[IV, \S 1, Proposition 1.12]{Lang2002}}]\label{lem:DegGE1DerNonZero}Let $f \in \F[x]$. If $\Char(\F) = 0$ and $\deg(f) \ge 1$, then $f' \neq 0$.
\end{lemma}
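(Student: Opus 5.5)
We argue directly from Definition~\ref{def:onedimder}. Write $f(x) = a_n x^n + \ldots + a_0$ with $n = \deg(f) \ge 1$, so that the leading coefficient satisfies $a_n \neq 0$. By definition, $f'(x) = n a_n x^{n-1} + \ldots + a_1$. The coefficient of $x^{n-1}$ in $f'$ is therefore $n a_n$, where $n$ denotes the element $1 + 1 + \ldots + 1$ ($n$ summands) of $\F$.

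The key step is to show that $n a_n \neq 0$ in $\F$. Since $\Char(\F) = 0$, the element $n \cdot 1_{\F}$ is nonzero for every positive integer $n$. Also $a_n \neq 0$ by the choice of $n$ as the degree. A field has no zero divisors, so $n a_n = (n\cdot 1_{\F})\, a_n \neq 0$.

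Hence $f'$ has a nonzero coefficient, so $f' \neq 0$ as a polynomial. In fact $\deg(f') = n - 1$. There is no genuine obstacle here. The only point needing care is that the hypothesis $\deg(f) \ge 1$ guarantees that the term $n a_n x^{n-1}$ actually appears. When $n = 1$ it is the nonzero constant $a_1$, and the argument above still applies.
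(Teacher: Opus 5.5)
Your proof is correct: with $n=\deg(f)$, the coefficient of $x^{n-1}$ in $f'$ is $(n\cdot 1_{\F})\,a_n$, a product of two nonzero field elements when $\Char(\F)=0$. The paper gives no proof of its own and simply cites Lang, whose argument is essentially this same computation (a nonzero monomial $a_\nu x^\nu$ with $\nu\ge 1$ has derivative $\nu a_\nu x^{\nu-1}\neq 0$).
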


\begin{corollary}\label{cor:DerZeroPolConst}Let $f \in \F[x]$. If $\Char(\F) = 0$ and $f' = 0$, then $f = \mathrm{const}$.
\end{corollary}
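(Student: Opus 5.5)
The statement to prove is Corollary~\ref{cor:DerZeroPolConst}: if $\Char(\F) = 0$ and $f' = 0$, then $f$ is constant. The plan is a direct contrapositive of Lemma~\ref{lem:DegGE1DerNonZero}, with a short case split on the degree of $f$ so that the zero polynomial is handled.

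Let $f \in \F[x]$ with $\Char(\F) = 0$ and $f' = 0$.

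\begin{itemize}
\item If $f = 0$, then $f$ is constant and there is nothing to prove.
\item Otherwise $\deg(f)$ is a nonnegative integer. Suppose, for contradiction, that $\deg(f) \ge 1$. Then Lemma~\ref{lem:DegGE1DerNonZero} gives $f' \neq 0$, contradicting the hypothesis $f' = 0$.
\end{itemize}

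Hence $\deg(f) = 0$ in the second case, so $f = a_0$ for some $a_0 \in \F$, and $f$ is constant.

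Nothing in this argument is an obstacle. The only point needing care is the degree convention for the zero polynomial, and the separate first case removes it.

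If one wanted to avoid citing the lemma, the same conclusion follows from Definition~\ref{def:onedimder}. Writing $f = a_n x^n + \ldots + a_0$, the condition $f' = 0$ means $k a_k = 0$ for every $k \ge 1$. Since $\Char(\F) = 0$, each integer $k \ge 1$ is nonzero in $\F$, so $a_k = 0$ for all $k \ge 1$. Therefore $f = a_0$.
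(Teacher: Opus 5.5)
Your proof is correct and takes essentially the same approach as the paper, which states this corollary without proof as the immediate contrapositive of Lemma~\ref{lem:DegGE1DerNonZero}. Your separate handling of the zero polynomial and your alternative coefficient-wise argument via Definition~\ref{def:onedimder} are both valid.
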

%
%

\begin{definition}
Let $P \in \F[x_1,\ldots, x_n]$. Then $\frac{\partial P}{\partial \mathbf v}(\mathbf a) \in \F[x_1,\ldots, x_n; y_1, \ldots, y_n]$ denotes the \emph{directional derivative of $P$ along $\mathbf v$ at $\mathbf a$} and is defined as a coefficient at $t$ in the following equality in $\F[x_1,\ldots, x_n; y_1, \ldots, y_n; t]$
\[
P(\mathbf a + t \mathbf v) = P(\mathbf a)  + P_1(\mathbf a, \mathbf v)t + \mbox{(terms of higher degree in $t$)},
\]
where the coordinates of $\mathbf a$ correspond to $x_1,\ldots, x_n$, and  the coordinates of $\mathbf v$ correspond to $y_1,\ldots, y_n$. 
\end{definition}

\begin{lemma}\label{lem:DirDerLin}
If $P \in \F[x_1,\ldots, x_n]$ and $\mathbf a \in \F^n$ are fixed, then the function $\mathbf v \mapsto  \frac{\partial P}{\partial \mathbf v}(\mathbf a)$ is linear.
\end{lemma}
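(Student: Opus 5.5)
By linearity of the map $\mathbf v \mapsto \frac{\partial P}{\partial \mathbf v}(\mathbf a)$ we mean that for fixed $\mathbf a$ the coefficient of $t$ in $P(\mathbf a + t\mathbf v)$, viewed as a function of $\mathbf v$, is a linear form. The plan is to reduce everything to monomials and compute the coefficient of $t$ explicitly.

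Write $P = \sum_{\alpha} c_\alpha \mathbf x^\alpha$ with $\mathbf x^\alpha = x_1^{\alpha_1}\cdots x_n^{\alpha_n}$. Taking the coefficient of $t$ is additive and commutes with multiplication by scalars $c_\alpha$, and a linear combination of linear forms is linear. So it suffices to treat $P = \mathbf x^\alpha$. Then
\[
P(\mathbf a + t\mathbf v) = \prod_{i=1}^n (\mathbf a_i + t\mathbf v_i)^{\alpha_i}.
\]
Expanding each factor by the binomial theorem gives
\[
(\mathbf a_i + t\mathbf v_i)^{\alpha_i} = \mathbf a_i^{\alpha_i} + \alpha_i \mathbf a_i^{\alpha_i-1}\mathbf v_i t + O(t^2),
\]
where the middle term is interpreted as $0$ when $\alpha_i = 0$. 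Multiplying these expansions, the coefficient of $t$ in the product is
\[
\sum_{i=1}^n \alpha_i \mathbf a_i^{\alpha_i-1}\mathbf v_i \prod_{j\neq i} \mathbf a_j^{\alpha_j}.
\]
This equals $\sum_i \frac{\partial \mathbf x^\alpha}{\partial x_i}(\mathbf a)\,\mathbf v_i$, using the formal partial derivatives of Definition~\ref{def:onedimder} applied variable by variable.

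For fixed $\mathbf a$ this expression has the form $\sum_i \beta_i \mathbf v_i$ with constants $\beta_i \in \F$, so it is a linear form in $\mathbf v$. Summing over monomials then gives the general identity
\[
\frac{\partial P}{\partial \mathbf v}(\mathbf a) = \sum_i \frac{\partial P}{\partial x_i}(\mathbf a)\,\mathbf v_i,
\]
which is linear in $\mathbf v$.

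The only point needing care is that the definition is formal, in $\F[x;y;t]$, so nothing depends on $|\F|$ or $\Char(\F)$. In particular we never evaluate polynomials as functions, so Corollary~\ref{DegFGLessEqual} is not needed.

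An alternative, coordinate-free check is also available. Expanding $P(\mathbf a + t(\mathbf v + \mu\mathbf w))$ directly is awkward, so instead one notes that $P(\mathbf a + t\mathbf v)$ is homogeneous of degree $1$ in $(t, \mathbf v)$ jointly at first order in $t$. That gives homogeneity in $\mathbf v$, but additivity would still require an expansion. For this reason the monomial computation above is the route to take, and it involves no real obstacle.
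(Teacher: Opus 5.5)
Your proof is correct, but it takes a different route from the paper's. You argue in coordinates: by linearity in $P$ you reduce to a monomial, expand $\prod_i(\mathbf a_i + t\mathbf v_i)^{\alpha_i}$, and read off the explicit formula $\frac{\partial P}{\partial \mathbf v}(\mathbf a)=\sum_i \frac{\partial P}{\partial x_i}(\mathbf a)\,\mathbf v_i$. Linearity in $\mathbf v$ then follows at once.

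The paper never chooses a basis. It gets homogeneity by comparing coefficients of $t$ in $P(\mathbf a + t(\lambda\mathbf v)) = P(\mathbf a + (\lambda t)\mathbf v)$. It gets additivity by writing $P(\mathbf a + t(\mathbf u+\mathbf v)) = P((\mathbf a + t\mathbf u) + t\mathbf v)$ and expanding in two steps. First it expands along $\mathbf v$ at the base point $\mathbf a + t\mathbf u$, whose linear term is $\frac{\partial P}{\partial \mathbf v}(\mathbf a + t\mathbf u)\,t = \frac{\partial P}{\partial \mathbf v}(\mathbf a)\,t + O(t^2)$. Then it expands $P(\mathbf a + t\mathbf u)$. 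So your closing remark, that a coordinate-free route would still need an explicit expansion for additivity, is off: this two-step substitution handles additivity cleanly.

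Your approach identifies $l_{P,\mathbf a}$ with the gradient of $P$ at $\mathbf a$. That identification underlies the phrase ``spanned by the range of the gradient field'' in Definition~\ref{def:LP}, and the paper's proof does not supply it. The paper's approach is basis-independent and uses nothing beyond the defining expansion. Both are purely formal computations in $\F[x;y;t]$, so neither needs any hypothesis on $|\F|$ or $\Char(\F)$.
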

\begin{proof}Let us show that 
\begin{equation}\label{lem:DirDerLin:eq101}
\frac{\partial P}{\partial (\lambda \mathbf v)}(\mathbf a) = \lambda\frac{\partial P}{\partial \mathbf v}(\mathbf a)\;\;\mbox{for all}\;\;\mathbf a, \mathbf v \in \F^n\;\;\mbox{and}\;\;\lambda \in \F.
\end{equation}
Let $\mathbf a, \mathbf v \in \F^n$ and $\lambda \in \F$. 
On the one hand, by the definition of $\frac{\partial P}{\partial \mathbf v}(\mathbf a)$,
\begin{equation}\label{lem:DirDerLin:eq1}
P(\mathbf a + t (\lambda \mathbf v)) = P(\mathbf a + (\lambda t)  \mathbf v)\\ 
= P(\mathbf a)  + \lambda \frac{\partial P}{\partial \mathbf v}(\mathbf a)t + \mbox{(terms of higher degree in $t$)}.
\end{equation}
On the other hand, by the definition of $\frac{\partial P}{\partial \lambda (\mathbf v)}(\mathbf a)$,
\begin{equation}\label{lem:DirDerLin:eq2}
P(\mathbf a + t (\lambda \mathbf v)) 
= P(\mathbf a)  + \frac{\partial P}{\partial (\lambda \mathbf v)}(\mathbf a)t + \mbox{(terms of higher degree in $t$)}.
\end{equation}
By comparing the coefficients of $t$ in~\eqref{lem:DirDerLin:eq1} and~\eqref{lem:DirDerLin:eq2} we obtain the equality~\eqref{lem:DirDerLin:eq101}. 

Now let us show that 
\begin{equation}\label{lem:DirDerLin:eq102}
\frac{\partial P}{\partial (\mathbf u + \mathbf v)}(\mathbf a) = \frac{\partial P}{\partial \mathbf u}(\mathbf a) + \frac{\partial P}{\partial \mathbf v}(\mathbf a)\;\;\mbox{for all}\;\;\mathbf a, \mathbf u, \mathbf v \in \F^n.
\end{equation}
Let $\mathbf a, \mathbf u, \mathbf v \in \F^n$. On the one hand, by the definition of $\frac{\partial P}{\partial \mathbf u}(\mathbf a)$ and $\frac{\partial P}{\partial \mathbf v}(\mathbf a)$,
\begin{multline}\label{lem:DirDerLin:eq3}
P(\mathbf a + t (\mathbf u + \mathbf v)) = P((\mathbf a + t\mathbf u) +   t\mathbf v)\\
= P(\mathbf a + t\mathbf u)  +  \frac{\partial P}{\partial \mathbf v}(\mathbf a + t\mathbf u)t\phantom{XXXXXXXXXXX}\\\phantom{XXXXXXXXXXX} + \mbox{(terms of higher degree in $t$)}\\
= P(\mathbf a + t\mathbf u)  +  \left(\frac{\partial P}{\partial \mathbf v}(\mathbf a)+\mbox{(terms of higher degree in $t$)}\right)t\phantom{XXXXXXX}\\\phantom{XXXXXXXXXXX} + \mbox{(terms of higher degree in $t$)}\\
=  P(\mathbf a)  + \frac{\partial P}{\partial \mathbf u}(\mathbf a) t + \mbox{(terms of higher degree in $t$)} + \frac{\partial P}{\partial \mathbf v}(\mathbf a) t\phantom{XXXXXXX}\\\phantom{XXXXXXXXXXX} + \mbox{(terms of higher degree in $t$)}\\
=  P(\mathbf a) + \left(\frac{\partial P}{\partial \mathbf u}(\mathbf a) + \frac{\partial P}{\partial \mathbf v}(\mathbf a) \right)t + \mbox{(terms of higher degree in $t$)},
\end{multline}

On the other hand, by the definition of $\frac{\partial P}{\partial (\mathbf u + \mathbf v)}(\mathbf a)$,
\begin{equation}\label{lem:DirDerLin:eq4}
P(\mathbf a +  t (\mathbf u + \mathbf v)) 
= P(\mathbf a)  + \frac{\partial P}{\partial (\mathbf u + \mathbf v)}(\mathbf a)t + \mbox{(terms of higher degree in $t$)}.
\end{equation}
%
By comparing the coefficients of $t$ in~\eqref{lem:DirDerLin:eq3} and~\eqref{lem:DirDerLin:eq4} we obtain the equality~\eqref{lem:DirDerLin:eq102}.
%
\end{proof}

\begin{definition}\label{def:PAV}
Let $P \in \F[x_1,\ldots, x_n]$ and $\mathbf a, \mathbf v \in \F^n$ are fixed. Then by $P_{\mathbf a, \mathbf v} \in \F[t]$ we denote a polynomial defined by
\begin{equation}\label{eq:def:PAV}
P_{\mathbf a, \mathbf v} = P(\mathbf a + t \mathbf v).
\end{equation}

\end{definition}

\begin{lemma}\label{def:partDer}Let $P \in \F[x_1,\ldots, x_n]$ and $\mathbf a, \mathbf v \in \F^n$. Then $\deg(P_{\mathbf a, \mathbf v}) \le \deg(P)$ and 
\begin{equation}\label{def:partDer:eqq1}
\frac{\partial P}{\partial \mathbf v}(\mathbf a) = P'_{\mathbf a, \mathbf v}(0)\;\;\mbox{for all}\;\; \mathbf v \in \F^n.
\end{equation}
\end{lemma}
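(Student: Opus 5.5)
Both claims follow by expanding $P$ monomial by monomial after the substitution $\mathbf x = \mathbf a + t\mathbf v$. Write $P = \sum_{\alpha} c_\alpha x_1^{\alpha_1}\cdots x_n^{\alpha_n}$ with $|\alpha| = \alpha_1 + \ldots + \alpha_n \le \deg(P)$ whenever $c_\alpha \neq 0$. With $\mathbf a, \mathbf v$ fixed,
\[
P_{\mathbf a, \mathbf v} = \sum_\alpha c_\alpha \prod_{i=1}^n (\mathbf a_i + t\mathbf v_i)^{\alpha_i} \in \F[t].
\]
Each factor $\mathbf a_i + t\mathbf v_i$ has degree at most $1$ in $t$, so each summand has degree at most $|\alpha| \le \deg(P)$. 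Hence $\deg(P_{\mathbf a,\mathbf v}) \le \deg(P)$. This is the first claim.

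For the identity $\frac{\partial P}{\partial \mathbf v}(\mathbf a) = P'_{\mathbf a,\mathbf v}(0)$, I would expand $P_{\mathbf a,\mathbf v}$ in powers of $t$:
\[
P_{\mathbf a,\mathbf v} = b_0 + b_1 t + b_2 t^2 + \ldots + b_d t^d,\qquad b_j \in \F .
\]
By Definition~\ref{def:onedimder}, $P'_{\mathbf a,\mathbf v} = b_1 + 2b_2 t + \ldots + d\,b_d t^{d-1}$. Evaluating at $t = 0$ kills every term except the constant one, so $P'_{\mathbf a,\mathbf v}(0) = b_1$. This uses only the formal definition of the derivative and holds in any characteristic, since the factors $j b_j$ with $j \ge 2$ are multiplied by a positive power of $t$.

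It remains to identify $b_1$ with $\frac{\partial P}{\partial \mathbf v}(\mathbf a)$. The directional derivative is defined as the coefficient of $t$ in $P(\mathbf a + t\mathbf v)$ computed in $\F[x_1,\ldots,x_n; y_1,\ldots,y_n; t]$, with $\mathbf a, \mathbf v$ as indeterminates. Specializing $x_i \mapsto \mathbf a_i$ and $y_i \mapsto \mathbf v_i$ is a ring homomorphism to $\F[t]$ that fixes $t$. It therefore sends the coefficient of $t$ in $P(\mathbf a + t\mathbf v)$ to the coefficient of $t$ in the specialized polynomial $P_{\mathbf a,\mathbf v}$, and that coefficient is $b_1$.

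The only point requiring care is this compatibility of coefficient extraction with specialization. It holds because the generic expansion is a polynomial in $t$ whose coefficients lie in $\F[\mathbf x,\mathbf y]$, and the homomorphism acts on those coefficients alone. I expect no real obstacle beyond this bookkeeping.
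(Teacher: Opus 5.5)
Your proposal is correct and follows essentially the same route as the paper: the degree bound comes from substituting $\mathbf a + t\mathbf v$ into the monomials of $P$, and the identity comes from reading off the coefficient of $t$ in $P(\mathbf a + t\mathbf v)$ and comparing it with the formal derivative at $0$. You are somewhat more careful than the paper in justifying that specializing the indeterminates commutes with extracting the $t$-coefficient, a step the paper treats as immediate.
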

\begin{proof}Indeed, the inequality $\deg(P_{\mathbf a, \mathbf v}) \le \deg(P)$ follows directly from the definition of degree of multivariate polynomial and the definition of $P_{\mathbf a, \mathbf v}$. 

In addition, we have an equality by the definition of $\frac{\partial P}{\partial \mathbf v}(\mathbf a)$ and $P_{\mathbf a, \mathbf v}$,
\begin{multline*}
P_{\mathbf a, \mathbf v}(t) \overset{\mbox{\scriptsize Definition of $P_{\mathbf a, \mathbf v}$}}{=\joinrel=\joinrel=\joinrel=\joinrel=\joinrel=\joinrel=\joinrel=\joinrel=} P(\mathbf a + t \mathbf v) 
\overset{\mbox{\scriptsize Definition of $\frac{\partial P}{\partial \mathbf v}(\mathbf a)$}}{=\joinrel=\joinrel=\joinrel=\joinrel=\joinrel=\joinrel=\joinrel=\joinrel=} P(\mathbf a)  + \frac{\partial P}{\partial \mathbf v}(\mathbf a)t\phantom{XXXX}\\\phantom{XXXXXXXXXXX}  + \mbox{(terms of higher degree in $t$)}.
\end{multline*}
Therefore, the equality~\eqref{def:partDer:eqq1} holds by Definition~\ref{def:onedimder}.
\end{proof}


\section{Definition and properties of $\mathcal L_P$}
\label{sec:linspace}

\begin{definition} Let $P \in \F[x_1,\ldots, x_n]$ and $\mathbf a, \mathbf v \in \F^n$. By $l_{P,\mathbf a} \colon \mathbb F^n \to \mathbb F$ we denote a linear function defined by
\[
l_{P, \mathbf a}(\mathbf v) = \frac{\partial P}{\partial \mathbf v}(\mathbf a).
\] 
\end{definition}

\begin{definition}\label{def:LP}
Let $P \in \F[x_1,\ldots, x_n]$. By $\mathcal L_P \subseteq (\F^n)^*$ we denote a vector space spanned by all the linear functions of the form $l_{P, \mathbf a}$ for some $\mathbf a \in \F^n.$ That is,
\[
\mathcal L_P = \Span (\{\mathbf v \mapsto \frac{\partial P}{\partial \mathbf v}(\mathbf a) \mid \mathbf a \in \mathbb F^n\}).
\]
Equivalently, $\mathcal L_{P} \subseteq {\F^n}^*$ spanned by the range of the gradient field of $P \in \F[x_1,\ldots, x_n]$.
\end{definition}

\begin{remark}Although in certain cases the set $S = \{\mathbf v \mapsto \frac{\partial P}{\partial \mathbf v}(\mathbf a) \mid \mathbf a \in \mathbb F^n\}$ is a vector space, in the general case it is not true. For example, if $\F = \mathbb R, P = x^3,$ then $S = \{\mathbf v \mapsto \alpha \mathbf v \mid \alpha \ge 0\},$ which is not a vector space.
\end{remark}

\begin{lemma}\label{lem:RadAnnLP}Let $P \in \F[x_1,\ldots, x_n]$. If $\mathbf v \in \mathcal \rad(P)$ and $\deg(P) < |\F|$, then $l(\mathbf v) = 0$ for all $l \in \mathcal L_P.$
\end{lemma}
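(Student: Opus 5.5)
By linearity it suffices to treat the spanning elements of $\mathcal L_P$. Since $\mathcal L_P$ is spanned by the functionals $l_{P,\mathbf a}$ with $\mathbf a \in \F^n$, it is enough to show that $l_{P,\mathbf a}(\mathbf v) = \frac{\partial P}{\partial \mathbf v}(\mathbf a) = 0$ for every $\mathbf a \in \F^n$ and every $\mathbf v \in \rad(P)$.

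Fix $\mathbf a \in \F^n$ and $\mathbf v \in \rad(P)$, and consider the one-variable polynomial $P_{\mathbf a,\mathbf v}(t) = P(\mathbf a + t\mathbf v) \in \F[t]$ from Definition~\ref{def:PAV}. By the definition of the radical, $P(\mathbf a + \lambda \mathbf v) = P(\mathbf a)$ for all $\lambda \in \F$. Hence the polynomial
\[
P_{\mathbf a,\mathbf v}(t) - P(\mathbf a) \in \F[t]
\]
defines the zero function on $\F$. By Lemma~\ref{def:partDer}, its degree is at most $\deg(P) < |\F|$.

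We now apply Corollary~\ref{DegFLessZero} with $n=1$. It applies because either $\F$ is infinite or the degree is less than $|\F|$, and it gives $P_{\mathbf a,\mathbf v}(t) = P(\mathbf a)$ as polynomials. So $P_{\mathbf a,\mathbf v}$ is constant, and its formal derivative (Definition~\ref{def:onedimder}) is the zero polynomial. In particular $P'_{\mathbf a,\mathbf v}(0) = 0$. By equality~\eqref{def:partDer:eqq1} of Lemma~\ref{def:partDer}, this is exactly $\frac{\partial P}{\partial \mathbf v}(\mathbf a) = 0$.

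The only delicate point is the passage from the functional identity to the polynomial identity. It is not true over small finite fields, and that is exactly where the hypothesis $\deg(P) < |\F|$ enters. No characteristic assumption is needed: we never deduce constancy from a vanishing derivative; we only differentiate a polynomial already known to be constant.
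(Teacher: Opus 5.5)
Your proof is correct and follows the same route as the paper. In both, the definition of the radical gives a functional identity in $t$, which Corollary~\ref{DegFLessZero} together with $\deg(P)<|\F|$ upgrades to a polynomial identity; you then differentiate, evaluate at $0$ via Lemma~\ref{def:partDer}, and extend to all of $\mathcal L_P$ by linearity. Subtracting the constant $P(\mathbf a)$ is the right way to state the key step: the paper writes $P_{\mathbf a,\mathbf v}(t)=0$, a slip for $P_{\mathbf a,\mathbf v}(t)=P(\mathbf a)$ that does not affect the conclusion.
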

\begin{proof}Let $\mathbf a \in \F^n$ and $\mathbf v \in \rad(P)$. Then 
\begin{equation}\label{lem:RadAnnLP:eq1}
P_{\mathbf a, \mathbf v}(t) = 0\;\;\mbox{for all}\;\;t \in \F
\end{equation}
by the definition of  $P_{\mathbf a, \mathbf v} \in \F[t]$ (Definition~\ref{def:PAV}). Since $\deg(P_{\mathbf a, \mathbf v}) \leq \deg(P)$ by Lemma~\ref{def:partDer} and $\deg(P) < |\F|$ by the statement of the lemma, then\linebreak $\deg(P_{\mathbf a, \mathbf v}) < |\F|$. Hence,
\begin{equation}\label{lem:RadAnnLP:eq2}
P_{\mathbf a, \mathbf v} = 0\;\;\mbox{as an element of}\;\;\F[t]
\end{equation}
by Lemma~\ref{DegFLessZero}. Using the definition of $P'_{\mathbf a, \mathbf v}$ (Definition~\ref{def:onedimder}), we conclude from~\eqref{lem:RadAnnLP:eq2} that 
\begin{equation}\label{lem:RadAnnLP:eq4}
P'_{\mathbf a, \mathbf v}(0) = 0.
\end{equation}
Therefore,
\begin{equation}\label{lem:RadAnnLP:eq3}
\frac{\partial P}{\partial \mathbf v}(\mathbf a) \overset{\mbox{\scriptsize Lemma~\ref{def:partDer}}}{=\joinrel=\joinrel=\joinrel=\joinrel=\joinrel=\joinrel=\joinrel=} P'_{\mathbf a, \mathbf v}(0)  \overset{\eqref{lem:RadAnnLP:eq4}}{=\joinrel=\joinrel=} 0.
\end{equation}

Thus, $l_{P, \mathbf a}(\mathbf v) = 0$ for all $\mathbf a$. Since every $l \in \mathcal L_P$ is a linear combination of $l_{P, \mathbf a_1}, \ldots, l_{P, \mathbf a_k}$ for some $k \ge 0$ and $\mathbf a_1,\ldots, \mathbf a_k \in \F^n$, then we conclude that $l(\mathbf v) = 0$ for all $l \in \mathcal L_P.$
\end{proof}

Now we discuss when the converse if also true.

\begin{lemma}\label{lem:charzerokp} Assume that $\Char (\F) = 0$. Let $P \in \F[x_1,\ldots, x_n]$ and $\mathbf v \in \F^n.$ If $l(\mathbf v) = 0$ for all $l \in \mathcal L_P$, then $\mathbf v \in \mathcal \rad(P).$
\end{lemma}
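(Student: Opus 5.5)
Since $\Char(\F) = 0$, the field $\F$ is infinite, so Corollary~\ref{DegFLessZero} and Corollary~\ref{cor:DerZeroPolConst} are available. Assume $l(\mathbf v) = 0$ for all $l \in \mathcal L_P$; in particular $l_{P,\mathbf b}(\mathbf v) = \frac{\partial P}{\partial \mathbf v}(\mathbf b) = 0$ for every $\mathbf b \in \F^n$. By the definition of $\rad(P)$ we must show that $P(\mathbf a + \lambda \mathbf v) = P(\mathbf a)$ for all $\mathbf a \in \F^n$ and $\lambda \in \F$. Equivalently, for each fixed $\mathbf a$, the one-variable polynomial $P_{\mathbf a, \mathbf v} \in \F[t]$ of Definition~\ref{def:PAV} should be constant, equal to $P_{\mathbf a,\mathbf v}(0) = P(\mathbf a)$.

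The key step is to show that $P'_{\mathbf a, \mathbf v} = 0$ in $\F[t]$. Fix $\mathbf a$ and $s \in \F$ and put $\mathbf b = \mathbf a + s\mathbf v$. We have $P_{\mathbf b, \mathbf v}(t) = P(\mathbf a + (s+t)\mathbf v) = P_{\mathbf a,\mathbf v}(s+t)$ as polynomials in $t$. Comparing the coefficients of $t$, which amounts to a Taylor shift, the linear coefficient of $P_{\mathbf a,\mathbf v}(s+t)$ in $t$ is $P'_{\mathbf a,\mathbf v}(s)$. Justifying this is a routine expansion of $(s+t)^m$ by the binomial theorem, which yields $m s^{m-1}$ as the coefficient of $t$ and matches Definition~\ref{def:onedimder}. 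Hence, by Lemma~\ref{def:partDer},
\[
P'_{\mathbf a,\mathbf v}(s) = P'_{\mathbf b,\mathbf v}(0) = \frac{\partial P}{\partial \mathbf v}(\mathbf a + s\mathbf v) = 0 .
\]
So $P'_{\mathbf a,\mathbf v}$ vanishes at every point of the infinite field $\F$, and Corollary~\ref{DegFLessZero} gives $P'_{\mathbf a,\mathbf v} = 0$ as a polynomial.

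Now Corollary~\ref{cor:DerZeroPolConst}, which is where $\Char(\F)=0$ is used, shows that $P_{\mathbf a,\mathbf v}$ is constant. Therefore $P(\mathbf a + \lambda\mathbf v) = P_{\mathbf a,\mathbf v}(\lambda) = P_{\mathbf a,\mathbf v}(0) = P(\mathbf a)$ for all $\lambda$. Since $\mathbf a$ was arbitrary, $\mathbf v \in \rad(P)$.

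The only step needing care is the identification of the $t$-coefficient of $P_{\mathbf a,\mathbf v}(s+t)$ with the formal derivative at $s$. This is the main obstacle only in a bookkeeping sense, and the binomial expansion handles it.
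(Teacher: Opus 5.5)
Your proof is correct and follows the paper's argument: you use the shift identity $P_{\mathbf a + s\mathbf v,\mathbf v}(t) = P_{\mathbf a,\mathbf v}(s+t)$ to get $P'_{\mathbf a,\mathbf v}(s) = l_{P,\mathbf a + s\mathbf v}(\mathbf v) = 0$ for every $s$, and then Corollary~\ref{cor:DerZeroPolConst} to conclude that $P_{\mathbf a,\mathbf v}$ is constant. You are slightly more explicit than the paper: you invoke Corollary~\ref{DegFLessZero} to pass from pointwise vanishing of $P'_{\mathbf a,\mathbf v}$ to $P'_{\mathbf a,\mathbf v}=0$ in $\F[t]$, and you justify the Taylor-shift coefficient via the binomial expansion.
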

\begin{proof}
Let $\mathbf a \in \F^n$, $\kappa \in \F$ and $\mathbf v \in \rad(P)$. Since $\Char (\F) = 0$, then $\F$ is an infinite field. Hence, we have the following equality in $\F[t]$
\begin{equation}\label{lem:charzerokp:eq1}
P_{\mathbf a + \kappa \mathbf v, \mathbf v} = P(\mathbf a + \kappa \mathbf v + t \mathbf v) = P_{\mathbf a, \mathbf v}(t + \kappa),
\end{equation}
where $P_{\mathbf a + \kappa \mathbf v, \mathbf v}$ and $P_{\mathbf a, \mathbf v}$ defined in Definition~\ref{def:PAV}. Since $l_{P, \mathbf a} \in \mathcal L_P$ for all $\mathbf a \in \F^n$,
\begin{multline}\label{lem:charzerokp:eq2}
P_{\mathbf a, \mathbf v}'(\kappa) = P_{\mathbf a, \mathbf v}(t + \kappa)'(0) \overset{\eqref{lem:charzerokp:eq1}}{=\joinrel=\joinrel=} P_{\mathbf a + \kappa \mathbf v, \mathbf v}'(0) \\ \overset{\mbox{\scriptsize Definition~\ref{def:PAV}}}{=\joinrel=\joinrel=\joinrel=\joinrel=\joinrel=\joinrel=\joinrel=\joinrel=}  l_{P, \mathbf a + \kappa \mathbf v}(\mathbf v) \overset{\mbox{\scriptsize the conditions of the lemma}}{=\joinrel=\joinrel=\joinrel=\joinrel=\joinrel=\joinrel=\joinrel=\joinrel=\joinrel=\joinrel=\joinrel=\joinrel=\joinrel=\joinrel=\joinrel=} 0\;\;\mbox{for all}\;\;\mathbf a \in \F^n.
\end{multline}
Since the equality~\eqref{lem:charzerokp:eq2} holds for all $\kappa \in \F$ and $\Char(\F) = 0$ by the statement of the lemma, then \[
P_{\mathbf a, \mathbf v} =  \mathrm{const}
\]
by Lemma~\ref{cor:DerZeroPolConst}. By the definition of $P_{\mathbf a, \mathbf v}$ this means that 
\begin{equation}\label{lem:charzerokp:eq3}
P_{\mathbf a, \mathbf v}(\mathbf a + \kappa \mathbf v) = P_{\mathbf a, \mathbf v}(\mathbf a)\;\;\mbox{for all}\;\;\kappa \in \F.
\end{equation}
The equality~\eqref{lem:charzerokp:eq3} holds for all $\mathbf a \in \F^n$. Therefore, $\mathbf v \in \mathcal \rad(P).$
\end{proof}

\begin{remark}In the case $\Char (\F) \neq 0$ we need additional assumption on $P$ for the statement of Lemma~\ref{lem:charzerokp} to hold.  For example, if $\Char (\F) = p,$ $P = x^p \in \F[x],$ then $\frac{\partial P}{\partial \mathbf{v}}(\mathbf a) = 0$ for all $\mathbf a\in \F,$ and also $(a + \lambda b)^p = a^p$ for nonzero $\lambda$ implies $b = 0$ and hence $\rad(P) = \{0\}.$ One of the possible necessary and sufficient condition is formulated in Lemma~\ref{lem:dzeroeqk}\ref{lem:dzeroeqk:part2} below.
\end{remark}

\begin{lemma}\label{lem:dzeroeqk}Let $P \in \F[x_1,\ldots, x_n]$ be such that $\deg(P) < |\F|$ . Then
\begin{enumerate}[label=(\alph*), ref=(\alph*)]
\item\label{lem:dzeroeqk:part1} $\dim (\mathcal L_P) + \dim (\rad(P)) \leqslant n$ for all $P \in \F[x_1,\ldots, x_n]$.
\item\label{lem:dzeroeqk:part2} $\{\mathbf v \in \F^n \mid l(\mathbf v) = 0\;\mbox{for all} \;l \in \mathcal L_P\} = \rad(P)$ if and only if $\dim (\rad(P)) + \dim (\mathcal L_P) = n.$
\end{enumerate}
\end{lemma}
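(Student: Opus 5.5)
Write $\mathcal L_P^{\perp} = \{\mathbf v \in \F^n \mid l(\mathbf v) = 0\;\mbox{for all}\; l \in \mathcal L_P\}$ for the annihilator of $\mathcal L_P$ in $\F^n$. The plan is to reduce both parts to standard duality combined with Lemma~\ref{lem:RadAnnLP}.

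\emph{Preliminary fact.} For every subspace $\mathcal L \subseteq (\F^n)^*$ the common kernel $\mathcal L^{\perp}$ satisfies
\[
\dim(\mathcal L^{\perp}) = n - \dim(\mathcal L).
\]
To see this, pick a basis $l_1,\ldots,l_m$ of $\mathcal L$. Then $\mathcal L^{\perp}$ is the kernel of the linear map $\F^n \to \F^m$, $\mathbf v \mapsto (l_1(\mathbf v),\ldots,l_m(\mathbf v))$. This map is surjective: if its image were proper, some nonzero functional on $\F^m$ would vanish on it, which would give a nontrivial linear relation among $l_1,\ldots,l_m$. Rank--nullity then yields the formula. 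We also note that $\rad(P)$ is a subspace. This follows from the definition, since $\mathbf w_1,\mathbf w_2 \in \rad(P)$ gives $P(\mathbf a + \lambda(\mathbf w_1+\mu\mathbf w_2)) = P(\mathbf a + \lambda\mu\mathbf w_2) = P(\mathbf a)$.

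\emph{Part~\ref{lem:dzeroeqk:part1}.} Since $\deg(P) < |\F|$, Lemma~\ref{lem:RadAnnLP} gives $\rad(P) \subseteq \mathcal L_P^{\perp}$. Hence
\[
\dim(\rad(P)) \le \dim(\mathcal L_P^{\perp}) = n - \dim(\mathcal L_P),
\]
which is exactly~\ref{lem:dzeroeqk:part1}.

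\emph{Part~\ref{lem:dzeroeqk:part2}.} Again $\rad(P) \subseteq \mathcal L_P^{\perp}$ by Lemma~\ref{lem:RadAnnLP}, and both are finite-dimensional subspaces of $\F^n$. Therefore equality holds if and only if their dimensions agree, that is, if and only if $\dim(\rad(P)) = n - \dim(\mathcal L_P)$. This is the stated condition. Both directions follow at once: equal subspaces have equal dimension, and a subspace contained in another of the same finite dimension coincides with it.

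The only step that needs any care is the preliminary dimension formula for the annihilator. Everything else is the inclusion supplied by Lemma~\ref{lem:RadAnnLP}, which is where the hypothesis $\deg(P) < |\F|$ is used.
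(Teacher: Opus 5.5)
Your proof is correct and follows the paper's argument. Part~(b) is identical: the inclusion $\rad(P)\subseteq \mathcal L_P^{\perp}$ from Lemma~\ref{lem:RadAnnLP} combined with $\dim(\mathcal L_P)+\dim(\mathcal L_P^{\perp})=n$. In part~(a) you use that same inclusion in $\F^n$, whereas the paper uses its dual form $\mathcal L_P\subseteq \Img(\pi_{\rad(P)}^*)$ in $(\F^n)^*$; this is the same dimension count, and your version lets one annihilator formula serve both parts.
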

\begin{proof}Let $\pi_{\rad(P)} \colon \F^n \to \F^n/\rad(P)$ be a canonic projection. Since $\pi_{\rad(P)}$ is surjection, then $\pi_{\rad(P)}^*$ is injection and consequently
\begin{equation}\label{lem:dzeroeqk:eq1}
\dim(\Img (\pi_{\rad(P)}^*)) = \dim((\F^n/\rad(P))^*)
\end{equation}

In addition, Lemma~\ref{lem:RadAnnLP} and the universal property of $\pi_{\rad(P)}$ imply that 
\begin{equation}\label{lem:dzeroeqk:eq6}
\mathcal L_P \subseteq \Img (\pi_{\rad(P)}^*)
\end{equation}

Now let us proof the part~\ref{lem:dzeroeqk:part1} of the lemma. We have
 \begin{equation}\label{lem:dzeroeqk:eq2}
\dim (\mathcal L_P) \overset{\eqref{lem:dzeroeqk:eq6}}{\leq} \Img (\pi_{\rad(P)}^*) \stackrel{\eqref{lem:dzeroeqk:eq1}}{=\joinrel=\joinrel=} \dim((\F^n/\rad(P))^*) = n - \dim(\rad(P)).
\end{equation}
Therefore,
\begin{equation*}
\dim (\mathcal L_P) +  \dim(\rad(P)) \le n.
\end{equation*}
This establishes the part~\ref{lem:dzeroeqk:part1} of the lemma.

Let us prove the part~\ref{lem:dzeroeqk:part2} of the lemma. Let $(\mathcal L_P)^0 \subseteq \F^n$ be defined by 
\begin{equation}\label{lem:dzeroeqk:eq11}
(\mathcal L_P)^0 = \{\mathbf v \in \F^n \mid l(\mathbf v) = 0\;\mbox{for all} \;l \in \mathcal L_P\}. 
\end{equation}
From the basic properties of the vector spaces and their annihilators we have
\begin{equation}\label{lem:dzeroeqk:eq4} 
\dim(\mathcal L_P) + \dim ((\mathcal L_P)^0) = n.
\end{equation}

Now let us prove sufficiency. Assume that 
\begin{equation}\label{lem:dzeroeqk:eq3}
\{\mathbf v \in \F^n \mid l(\mathbf v) = 0\;\mbox{for all} \;l \in \mathcal L_P\} = \rad(P). 
\end{equation}
By aligning~\eqref{lem:dzeroeqk:eq11} and~\eqref{lem:dzeroeqk:eq3} together and substituting the result into~\eqref{lem:dzeroeqk:eq4} we conclude that 
\begin{equation}\label{lem:dzeroeqk:eq5} 
\dim(\mathcal L_P) + \dim (\rad(P)) = n
\end{equation}
 which establishes the sufficiency.

Let us prove the necessity. Assume that the equality~\eqref{lem:dzeroeqk:eq5} holds. Hence, 
\begin{equation}\label{lem:dzeroeqk:eq12} 
\dim(\rad(P)) = \dim(\mathcal L_P^0)
\end{equation}
by~\eqref{lem:dzeroeqk:eq4} and~\eqref{lem:dzeroeqk:eq5}. From Lemma~\ref{lem:RadAnnLP} we have an inclusion
\begin{equation}\label{lem:dzeroeqk:eq13}
\rad(P) \subseteq (\mathcal L_P)^0.
\end{equation}
From~\eqref{lem:dzeroeqk:eq12} and~\eqref{lem:dzeroeqk:eq13} we conclude that 
$$\rad(P) = (\mathcal L_P)^0 \overset{\eqref{lem:dzeroeqk:eq11}}{=\joinrel=\joinrel=\joinrel} \{\mathbf v \in \F^n \mid l(\mathbf v) = 0\;\mbox{for all} \;l \in \mathcal L_P\}$$
which establishes the necessity.
\end{proof}

\begin{remark}\label{rem:LPUseful}As the reader can see in the proofs of Lemma~\ref{lem:NKEvenAdditionPreserverIsZero} and\linebreak Lemma~\ref{lem:NKOddRadDetNK}, Lemma~\ref{lem:dzeroeqk} is also used for finding the radical of the polynomial $P \in \F[x_1,\ldots, x_n]$. 
\end{remark}

\begin{corollary}\label{cor:charzerokp}If $\Char (\F) = 0$, then $\dim (\rad(P)) + \dim (\mathcal L_P) = n$ for all $P \in \F[x_1,\ldots, x_n]$.
\end{corollary}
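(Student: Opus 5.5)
The plan is to combine Lemma~\ref{lem:charzerokp} with the two parts of Lemma~\ref{lem:dzeroeqk}. We may assume $\Char(\F)=0$, and then $\F$ is infinite. So the hypothesis $\deg(P) < |\F|$ of Lemma~\ref{lem:dzeroeqk} holds automatically for every $P \in \F[x_1,\ldots,x_n]$.

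As before, write $(\mathcal L_P)^0 = \{\mathbf v \in \F^n \mid l(\mathbf v) = 0\;\mbox{for all}\; l \in \mathcal L_P\}$. The first step gives the inclusion $\rad(P) \subseteq (\mathcal L_P)^0$; this is exactly Lemma~\ref{lem:RadAnnLP}, whose hypothesis $\deg(P)<|\F|$ is satisfied. The second step gives the reverse inclusion $(\mathcal L_P)^0 \subseteq \rad(P)$, which is precisely the content of Lemma~\ref{lem:charzerokp} in characteristic zero. Together these give $(\mathcal L_P)^0 = \rad(P)$.

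The third step applies Lemma~\ref{lem:dzeroeqk}\ref{lem:dzeroeqk:part2}. Since $(\mathcal L_P)^0 = \rad(P)$, that lemma yields $\dim(\rad(P)) + \dim(\mathcal L_P) = n$. Alternatively, one can substitute directly into the annihilator identity $\dim(\mathcal L_P) + \dim((\mathcal L_P)^0) = n$ for a subspace of $(\F^n)^*$.

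The only step needing care is the appeal to Lemma~\ref{lem:charzerokp}. As written, its proof starts from ``$\mathbf v \in \rad(P)$'', but it should start from the assumption $\mathbf v \in (\mathcal L_P)^0$. So I would re-read that argument with this reading. Fix $\mathbf a$ and use the shift identity $P_{\mathbf a+\kappa\mathbf v,\mathbf v}(t) = P_{\mathbf a,\mathbf v}(t+\kappa)$. This gives $P'_{\mathbf a,\mathbf v}(\kappa) = l_{P,\mathbf a+\kappa\mathbf v}(\mathbf v) = 0$ for every $\kappa\in\F$. Since $\F$ is infinite, $P'_{\mathbf a,\mathbf v}=0$ as a polynomial. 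Corollary~\ref{cor:DerZeroPolConst} then makes $P_{\mathbf a,\mathbf v}$ constant, so $P(\mathbf a+\lambda\mathbf v)=P(\mathbf a)$ for all $\lambda$, that is, $\mathbf v\in\rad(P)$. With that confirmed, the corollary follows immediately.
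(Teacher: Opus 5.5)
Your proposal is correct and matches the paper's proof: the inclusion $\rad(P)\subseteq(\mathcal L_P)^0$ comes from Lemma~\ref{lem:RadAnnLP}, the reverse inclusion from Lemma~\ref{lem:charzerokp}, and Lemma~\ref{lem:dzeroeqk}\ref{lem:dzeroeqk:part2} turns the equality $(\mathcal L_P)^0=\rad(P)$ into the dimension count. You are also right that the proof of Lemma~\ref{lem:charzerokp} should start from $l(\mathbf v)=0$ for all $l\in\mathcal L_P$ rather than from $\mathbf v\in\rad(P)$, and your repaired argument (the shift identity, then $P'_{\mathbf a,\mathbf v}=0$ because $\F$ is infinite, then Corollary~\ref{cor:DerZeroPolConst}) is sound.
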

\begin{proof}
Let $P \in \F[x_1,\ldots, x_n]$. Lemma~\ref{lem:charzerokp} implies that 
$$
\{\mathbf v \in \F^n \mid l(\mathbf v) = 0\;\mbox{for all} \;l \in \mathcal L_P\} \subseteq \rad(P).
$$
In addition,
$$
\{\mathbf v \in \F^n \mid l(\mathbf v) = 0\;\mbox{for all} \;l \in \mathcal L_P\} \supseteq \rad(P)
$$
by Lemma~\ref{lem:RadAnnLP}. Therefore,
$$
\{\mathbf v \in \F^n \mid l(\mathbf v) = 0\;\mbox{for all} \;l \in \mathcal L_P\} = \rad(P).
$$
Hence, $\dim (\rad(P)) + \dim (\mathcal L_P) = n$ by Lemma~\ref{lem:dzeroeqk}\ref{lem:dzeroeqk:part2}.
\end{proof}

In the lemma below we show that the main condition appeared in this paper (the condition~\eqref{thm:homogenphipsiintro:eqq1}) implies that the values of elements of $\mathcal L_P$ at certain points are equal. This lemma is used in the proof of Lemma~\ref{lem:PPhiPsi} in the next section.

\begin{lemma}\label{lem:eqpd}Let $P \in \F[x_1,\ldots, x_n]$ and  $\mathbf x_1, \mathbf x_2, \mathbf y_1, \mathbf y_2 \in \F^n$. If $\deg(P) < |\F|$ and
\begin{equation}\label{lem:eqpd:eqlem1}
P(\mathbf x_1 + \lambda \mathbf y_1) = P(\mathbf x_2 + \lambda \mathbf y_2)\;\;\mbox{for all}\;\;\lambda \in \F, 
\end{equation}
then
\begin{equation*}\label{lem:eqpd:eqlem2}
l_{P,\mathbf x_1}(\mathbf y_1) = l_{P, \mathbf x_2}(\mathbf y_2).
\end{equation*}
\end{lemma}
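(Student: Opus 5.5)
The plan is to compare the two one-variable polynomials $P_{\mathbf x_1,\mathbf y_1}$ and $P_{\mathbf x_2,\mathbf y_2}$ from Definition~\ref{def:PAV}, and then read off the coefficient of $t$ via Lemma~\ref{def:partDer}.

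First, the hypothesis~\eqref{lem:eqpd:eqlem1} says exactly that $P_{\mathbf x_1,\mathbf y_1}(\lambda) = P_{\mathbf x_2,\mathbf y_2}(\lambda)$ for all $\lambda \in \F$. In other words, these two elements of $\F[t]$ define the same function on $\F$.

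Second, I will show that they coincide as polynomials. By Lemma~\ref{def:partDer}, both have degree at most $\deg(P)$. Since $\deg(P) < |\F|$, each has degree in $t$ less than $|\F|$, and this holds trivially if $\F$ is infinite. Corollary~\ref{DegFGLessEqual} with $n = 1$ then gives $P_{\mathbf x_1,\mathbf y_1} = P_{\mathbf x_2,\mathbf y_2}$ in $\F[t]$.

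Third, equal polynomials have equal formal derivatives (Definition~\ref{def:onedimder}), so $P'_{\mathbf x_1,\mathbf y_1}(0) = P'_{\mathbf x_2,\mathbf y_2}(0)$. Equivalently, their coefficients of $t$ agree. Applying Lemma~\ref{def:partDer} then yields
\[
l_{P,\mathbf x_1}(\mathbf y_1) = \frac{\partial P}{\partial \mathbf y_1}(\mathbf x_1) = P'_{\mathbf x_1,\mathbf y_1}(0) = P'_{\mathbf x_2,\mathbf y_2}(0) = \frac{\partial P}{\partial \mathbf y_2}(\mathbf x_2) = l_{P,\mathbf x_2}(\mathbf y_2).
\]

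The only real point of care is the second step, passing from equality as functions to equality as polynomials. This is precisely where the assumption $\deg(P) < |\F|$ is used, and it is handled by Corollary~\ref{DegFGLessEqual}; the remaining steps are purely formal.
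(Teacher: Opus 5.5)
Your proposal is correct and matches the paper's proof essentially step for step. You read the hypothesis as equality of $P_{\mathbf x_1,\mathbf y_1}$ and $P_{\mathbf x_2,\mathbf y_2}$ as functions, upgrade it to equality in $\F[t]$ using the degree bound of Lemma~\ref{def:partDer} together with Corollary~\ref{DegFGLessEqual}, and then compare the derivatives at $0$ via Lemma~\ref{def:partDer}, correctly noting that $\deg(P) < |\F|$ is needed only for the passage from functional to formal equality.
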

\begin{proof}Indeed, the equality~\eqref{lem:eqpd:eqlem1} implies that 
\begin{equation}\label{lem:eqpd:eq1}
P_{\mathbf x_1, \mathbf y_1}(\lambda) = P_{\mathbf x_2, \mathbf y_2}(\lambda)\;\;\mbox{for all}\;\;\lambda \in \F,
\end{equation}
where $P_{\mathbf x_1, \mathbf y_1}, P_{\mathbf x_2, \mathbf y_2} \in \F[t]$ are defined in Definition~\ref{def:PAV}. Since $\deg(P_{\mathbf x_1, \mathbf y_1}) \leq \deg(P)$ and $\deg(P_{\mathbf x_2, \mathbf y_2}) \leq \deg(P)$ by Lemma~\ref{def:partDer}, and $\deg(P) < |\F|$ by the statement of the lemma, then $\deg(P_{\mathbf x_1, \mathbf y_1}) < |\F|$ and $\deg(P_{\mathbf x_2, \mathbf y_2}) < |\F|$. Therefore, by~\eqref{lem:eqpd:eq1} and Lemma~\ref{DegFGLessEqual} we conclude that
\begin{equation}\label{lem:eqpd:eq2}
P_{\mathbf x_1, \mathbf y_1} = P_{\mathbf x_2, \mathbf y_2}\;\;\mbox{as elements of}\;\F[t].
\end{equation}
Hence, $P'_{\mathbf x_1, \mathbf y_1} = P'_{\mathbf x_2, \mathbf y_2}$ and, in particular,
$P'_{\mathbf x_1, \mathbf y_1}(0) = P'_{\mathbf x_2, \mathbf y_2}(0)$

Therefore,
\[
l_{P,\mathbf x_1}(\mathbf y_1) = P'_{\mathbf x_1, \mathbf y_1}(0) = P'_{\mathbf x_2, \mathbf y_2}(0) =  l_{P, \mathbf x_2}(\mathbf y_2)
\]
by Lemma~\ref{def:partDer} and the definitions of $l_{P,\mathbf x_1}$ and $l_{P,\mathbf x_2}$.
\end{proof}

\section{Pairs of maps $\phi, \psi$ satisfying the condition $P(\mathbf{x} + \lambda \mathbf{y}) = P(\phi(\mathbf{x}) + \lambda \psi(\mathbf{y}))$ for all $\mathbf{x},\mathbf{y} \in \mathbb F^n$ and $\lambda \in \F$}
\label{sec:factspres}

%
%
In this section we prove the main theorem of this article (Theorem~\ref{thm:homogenphipsi}). This theorem relies on Lemma~\ref{lem:PPhiPsi} which holds without an assumption that the polynomial $P \in \F[x_1,\ldots, x_n]$ is homogeneous. 

The following simple observation preceds the formulation of Lemma~\ref{lem:PPhiPsi} and is used in its  proof.

\begin{lemma}\label{lem:condimpliesphi}Let $P \in \F[x_1,\ldots, x_n]$, and let $\phi, \psi \colon \F^n \to \F^n$ be two maps such that
\begin{equation}\label{lem:condimpliesphi:eqq1}
P(\mathbf{x} + \lambda \mathbf{y}) = P(\phi(\mathbf{x}) + \lambda \psi(\mathbf{y}))\;\;\mbox{for all}\;\;\mathbf{x},\mathbf{y} \in \mathbb \F^n,\; \lambda \in \F.
\end{equation}
Then $P(\phi(\mathbf x)) = P (\mathbf x)$ for all $\mathbf x \in \F^n.$
\end{lemma}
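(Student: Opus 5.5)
The plan is to specialize the hypothesis~\eqref{lem:condimpliesphi:eqq1} at the single value $\lambda = 0$. For that choice the term $\lambda\mathbf y$ on the left-hand side disappears. Likewise, the term $\lambda \psi(\mathbf y)$ on the right-hand side disappears, whatever the value of $\psi(\mathbf y)$. This holds because $0 \cdot \mathbf w = \mathbf 0$ for every vector $\mathbf w \in \F^n$, and in particular for $\mathbf w = \psi(\mathbf y)$.

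Concretely, I would fix an arbitrary $\mathbf x \in \F^n$ and pick any auxiliary vector, say $\mathbf y = \mathbf 0$. The exact choice is irrelevant; we only need that the hypothesis quantifies over all $\mathbf y \in \F^n$, so some $\mathbf y$ exists. Substituting $\lambda = 0$ into~\eqref{lem:condimpliesphi:eqq1} gives
\[
P(\mathbf x) = P(\mathbf x + 0\cdot\mathbf y) = P(\phi(\mathbf x) + 0\cdot\psi(\mathbf y)) = P(\phi(\mathbf x)).
\]
Since $\mathbf x$ was arbitrary, this yields $P(\phi(\mathbf x)) = P(\mathbf x)$ for all $\mathbf x \in \F^n$, which is the claim.

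No step here is a real obstacle. The only point worth noting is that no assumption on $|\F|$ or on $\deg(P)$ is needed: the argument evaluates $P$ as a function at a single point and never identifies polynomials coefficientwise. For this reason neither Corollary~\ref{DegFLessZero} nor Corollary~\ref{DegFGLessEqual} is required.
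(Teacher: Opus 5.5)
Your proof is correct and is exactly the paper's argument: the paper's proof consists solely of specializing the hypothesis at $\lambda = 0$. Your remark that no assumption on $|\F|$ or $\deg(P)$ is needed is also accurate.
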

\begin{proof}Consider~\eqref{lem:condimpliesphi:eqq1} for $\lambda = 0.$
\end{proof}

\begin{lemma}\label{lem:PPhiPsi}Let $P \in \F[x_1,\ldots, x_n]$ be a polynomial such that $\deg (P) < |\F|$ and $\dim (\mathcal L_P) + \dim (\rad(P)) = n$.  If  $\phi, \psi \colon \F^n \to \F^n$ are two maps such that
\begin{equation}\label{lem:PPhiPsi:eqq1}
P(\mathbf{x} + \lambda \mathbf{y}) = P(\phi(\mathbf{x}) + \lambda \psi(\mathbf{y}))\;\;\mbox{for all}\;\;\mathbf{x},\mathbf{y} \in \mathbb \F^n,\; \lambda \in \F,
\end{equation}
then there exists a unique bijective linear map $\psi_{\rad(P)}$ on $\F^n / \rad(P)$ such that 
\begin{equation}\label{lem:PPhiPsi:eq}
\psi_{\rad(P)} \circ \pi_{\rad(P)} = \pi_{\rad(P)} \circ \psi,
\end{equation}
 where $\pi_{\rad(P)}$ is a canonical projection on $\F^n / \rad(P)$.
\end{lemma}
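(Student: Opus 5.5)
We plan to adapt the Tan--Wang argument, using $\mathcal L_P$ to read off linear information about $\psi$. The first step is to apply Lemma~\ref{lem:eqpd} to the condition~\eqref{lem:PPhiPsi:eqq1} with $\mathbf x_1=\mathbf x$, $\mathbf y_1=\mathbf y$, $\mathbf x_2=\phi(\mathbf x)$ and $\mathbf y_2=\psi(\mathbf y)$. This gives
\begin{equation*}
l_{P,\mathbf x}(\mathbf y)=l_{P,\phi(\mathbf x)}(\psi(\mathbf y))\quad\mbox{for all}\;\;\mathbf x,\mathbf y\in\F^n .
\end{equation*}
Next we choose $\mathbf a_1,\ldots,\mathbf a_m\in\F^n$ such that $l_{P,\mathbf a_1},\ldots,l_{P,\mathbf a_m}$ form a basis of $\mathcal L_P$, where $m=\dim(\mathcal L_P)$. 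Set $\mathbf b_i=\phi(\mathbf a_i)$. The displayed identity then says $l_{P,\mathbf a_i}(\mathbf y)=l_{P,\mathbf b_i}(\psi(\mathbf y))$ for every $i$ and every $\mathbf y$.

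Define linear maps $A,B\colon\F^n\to\F^m$ by $A(\mathbf y)=(l_{P,\mathbf a_i}(\mathbf y))_i$ and $B(\mathbf z)=(l_{P,\mathbf b_i}(\mathbf z))_i$. The identity reads $A=B\circ\psi$.

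By Lemma~\ref{lem:dzeroeqk}\ref{lem:dzeroeqk:part2} and the hypothesis $\dim(\mathcal L_P)+\dim(\rad(P))=n$, we have $\ker A=(\mathcal L_P)^0=\rad(P)$. Since the $l_{P,\mathbf a_i}$ are independent, $A$ is surjective, so $A$ induces an isomorphism $\bar A\colon\F^n/\rad(P)\to\F^m$. The functionals $l_{P,\mathbf b_i}$ also lie in $\mathcal L_P$, so $\ker B\supseteq\rad(P)$ and $B$ factors as $\bar B\circ\pi_{\rad(P)}$. From $A=B\circ\psi$ the map $B$ is surjective, hence $\bar B$ is surjective between spaces of the same dimension $m$, and therefore bijective. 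Consequently $\ker B=\rad(P)$, and $l_{P,\mathbf b_1},\ldots,l_{P,\mathbf b_m}$ also form a basis of $\mathcal L_P$.

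Now define $\psi_{\rad(P)}=\bar B^{-1}\circ\bar A$, which is a bijective linear map on $\F^n/\rad(P)$. To check~\eqref{lem:PPhiPsi:eq}, note that $\bar B\circ\pi_{\rad(P)}\circ\psi=B\circ\psi=A=\bar A\circ\pi_{\rad(P)}$. Applying $\bar B^{-1}$ gives $\pi_{\rad(P)}\circ\psi=\psi_{\rad(P)}\circ\pi_{\rad(P)}$. Uniqueness is immediate because $\pi_{\rad(P)}$ is surjective, so any map satisfying~\eqref{lem:PPhiPsi:eq} is determined on all of $\F^n/\rad(P)$.

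The main obstacle is the step showing $\ker B=\rad(P)$. It uses two facts: $l_{P,\mathbf b_i}\in\mathcal L_P$, so these functionals vanish on $\rad(P)$ by Lemma~\ref{lem:RadAnnLP}, and $B$ is surjective because $A$ is. A dimension count then forces $\bar B$ to be injective. No homogeneity of $P$ is needed.
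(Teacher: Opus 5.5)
Your proof is correct. It uses the same ingredients as the paper's proof: Lemma~\ref{lem:eqpd} applied at a basis $l_{P,\mathbf a_i}$ of $\mathcal L_P$, the fact that $l_{P,\phi(\mathbf a_i)}\in\mathcal L_P$, surjectivity of $A$ forcing the transition to be invertible, and Lemma~\ref{lem:dzeroeqk}\ref{lem:dzeroeqk:part2} to identify $(\mathcal L_P)^0$ with $\rad(P)$. The difference is in how the argument is organized.

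The paper writes $l_{P,\phi(\mathbf a_i)}=\sum_j c_{ij}l_{P,\mathbf a_j}$ (i.e.\ your $B=CA$) and proves $C$ invertible by plugging in vectors $\mathbf e_j$ dual to the $l_{P,\mathbf a_i}$. It then builds an explicit linear lift $\tilde\psi(\mathbf x)=\sum_i l_{P,\mathbf a_i}(\psi(\mathbf x))\,\mathbf e_i$ on $\F^n$, shows $\psi(\mathbf x)-\tilde\psi(\mathbf x)\in\rad(P)$, and only then descends to the quotient. Bijectivity is proved separately by returning to the polynomial identity: if $\psi(\mathbf x)\in\rad(P)$, then $P(\mathbf a+\lambda\mathbf x)=P(\phi(\mathbf a)+\lambda\psi(\mathbf x))=P(\phi(\mathbf a))=P(\mathbf a)$ by Lemma~\ref{lem:condimpliesphi}, so $\mathbf x\in\rad(P)$.

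You instead factor $A$ and $B$ through $\pi_{\rad(P)}$ and set $\psi_{\rad(P)}=\bar B^{-1}\circ\bar A$. This avoids the dual vectors and the auxiliary lift, and bijectivity follows from the dimension count with no second use of the identity or of Lemma~\ref{lem:condimpliesphi}. The paper's route has the mild advantage of producing a concrete linear map on $\F^n$ that agrees with $\psi$ modulo $\rad(P)$. Yours is shorter and gets linearity and bijectivity in one step from linear algebra once $A=B\circ\psi$ is established.
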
 

\begin{proof} 
Let $\phi, \psi \colon \F^n \to \F^n$ be two maps satisfying the condition~\eqref{lem:PPhiPsi:eqq1}. Let $d = \dim (\mathcal L_P)$ and $\mathbf a_1, \ldots \mathbf a_d \in \F$ be such that $\beta \colon l_{P, \mathbf a_1}, \ldots, l_{P, \mathbf a_d}$ forms a basis of $\L_P$. Let $\mathbf e_1, \ldots, \mathbf e_d \in \F^n$ be linearly independent vectors such that 
\begin{equation}\label{lem:PPhiPsi:eq6}
l_{P, \mathbf a_i}(\mathbf e_j) = \delta_{i\,j}\;\;\mbox{for all}\;\; 1 \le i, j \le d
\end{equation}
 (such vectors could be obtained by extending of $\beta$ to a basis of $\left(\F^n\right)^*$ and finding dual of this basis). Let $\tilde{\psi}\colon \F^n \to \F^n$ be defined by
\begin{equation}\label{lem:PPhiPsi:eq4}
\tilde{\psi}(\mathbf x) = l_{P, \mathbf a_1} \left(\psi(\mathbf x)\right)\mathbf e_1 +  \ldots + l_{P, \mathbf a_d} \left(\psi(\mathbf x)\right)\mathbf e_d\quad\mbox{for all}\;\; \mathbf x \in \F^n.
\end{equation}

The following four claims will be proven sequentially:
\begin{enumerate}[label=(\roman*), ref=(\roman*)]
\item\label{lem:PPhiPsi:part1} The map $\tilde{\psi}$ is linear and 
\begin{equation}\label{lem:PPhiPsi:eqq4}
\tilde{\psi}(\rad(P)) = \{\mathbf 0\}.
\end{equation}
\item\label{lem:PPhiPsi:part2} 
\begin{equation}\label{lem:PPhiPsi:eqq6}
\pi_{\rad} \circ \tilde{\psi} = \pi_{\rad} \circ \psi.
\end{equation}
\item\label{lem:PPhiPsi:part3} There exists a unique linear map $\psi_{\rad(P)} \colon \F^n / \rad(P) \to \F^n / \rad(P)$ such that 
\begin{equation}\label{lem:PPhiPsi:eqq5}
\psi_{\rad(P)} \circ \pi_{\rad(P)} = \pi_{\rad(P)} \circ \tilde{\psi}.
\end{equation}
\item\label{lem:PPhiPsi:part4} Every linear map $\psi_{\rad(P)} \colon \F^n / \rad(P) \to \F^n / \rad(P)$ satisfying the equation~\eqref{lem:PPhiPsi:eq} is bijective.
\end{enumerate}

The claims~\ref{lem:PPhiPsi:part2}, \ref{lem:PPhiPsi:part3} and~\ref{lem:PPhiPsi:part4} together clearly imply the statement of the lemma.

\paragraph{Claim~\ref{lem:PPhiPsi:part1}}
The proof of this claim employs a certain form of the beautiful argument of Tan and Wang (see Proposition 2.1 in~\cite{TAN2003311}).

Since $\phi$ and  $\psi$ satisfy condition~\eqref{lem:PPhiPsi:eqq1}, by Lemma~\ref{lem:eqpd} we have
\begin{equation}\label{lem:PPhiPsi:eq1}
l_{P,\mathbf a_i}(\mathbf x) = l_{P, \phi(\mathbf a_i)}(\psi(\mathbf x))\;\;\mbox{for all}\;\;1 \le i \le d\;\;\mbox{and}\;\;\mathbf x \in \F^n.
\end{equation}
 Since $\beta$ is a basis of $\L_P$, then there exists $C = (c_{i\,j}) \in \M_{d\,d}(\F)$ such that 
\begin{equation}\label{lem:PPhiPsi:eq2}
l_{P, \phi(\mathbf a_i)} = \sum_{j = 1}^{d} c_{i\,j} l_{P, \mathbf a_j}\;\;\mbox{for all}\;\; 1 \le i \le d.
\end{equation}
Hence, by substituting~\eqref{lem:PPhiPsi:eq2} into \eqref{lem:PPhiPsi:eq1} we obtain that
\begin{equation}\label{lem:PPhiPsi:eq3}
\begin{pmatrix}
l_{P,\mathbf a_1}(\mathbf x)\\
\vdots\\
l_{P,\mathbf a_d}(\mathbf x)
\end{pmatrix}
= C 
\begin{pmatrix}
l_{P,\mathbf a_1}(\psi(\mathbf x))\\
\vdots\\
l_{P,\mathbf a_d} (\psi(\mathbf x))
\end{pmatrix}
\;\;\mbox{for all}\;\;\mathbf x \in \F^n.
\end{equation}
By substituting $\mathbf e_{1}, \ldots, \mathbf e_{n}$ into~\eqref{lem:PPhiPsi:eq3} instead of $\mathbf x$ we conclude that 
\begin{equation*}
\begin{pmatrix}
1\\
\vdots\\
0
\end{pmatrix}, \ldots, 
\begin{pmatrix}
0\\
\vdots\\
1
\end{pmatrix} \in \colsp (C).
\end{equation*}
Hence, $\rk(C) = d$ and consequently $C$ is invertible matrix. By multiplying~\eqref{lem:PPhiPsi:eq3} by $C$ we have
\[
C^{-1}\begin{pmatrix}
l_{P,\mathbf a_1}(\mathbf x)\\
\vdots\\
l_{P,\mathbf a_d}(\mathbf x)
\end{pmatrix}
=  
\begin{pmatrix}
l_{P,\mathbf a_1}(\psi(\mathbf x))\\
\vdots\\
l_{P,\mathbf a_d} (\psi(\mathbf x))
\end{pmatrix}
\;\;\mbox{for all}\;\;\mathbf x \in \F^n.
\]
This implies that the function $l_{P,\mathbf a_i}\circ \psi$ is a linear combination of $l_{P,\mathbf a_1}, \ldots, l_{P,\mathbf a_d}$ for all $1 \le i \le d$. Since $l_{P,\mathbf a_i}(\rad(P)) = \{0\}$ for all $1 \le i \le d$ , then $\left(l_{P,\mathbf a_i}\circ \psi\right)(\rad(P)) = \{0\}.$ In addition, $l_{P,\mathbf a_i}\circ \psi$ is a linear function for all $1 \le i \le d$. Since  $\tilde{\psi}$ is a linear combination of $l_{P,\mathbf a_1}\circ \psi, \ldots, l_{P,\mathbf a_d}\circ \psi$ by its definition in~\eqref{lem:PPhiPsi:eq4}, then $\tilde{\psi}$ is a linear map such that $\tilde{\psi}(\rad(P)) = \{\mathbf 0\}$.

\paragraph{Claim~\ref{lem:PPhiPsi:part2}} Indeed,
\begin{multline}\label{lem:PPhiPsi:eq5}
l_{P,\mathbf a_i}(\psi (\mathbf x) - \tilde{\psi}(\mathbf x)) = l_{P,\mathbf a_i}(\psi (\mathbf x)) - l_{P,\mathbf a_i}\left( l_{P, \mathbf a_1} \left(\psi(\mathbf x)\right)\mathbf e_1 +  \ldots + l_{P, \mathbf a_d} \left(\psi(\mathbf x)\right)\mathbf e_d\right)\\
 \overset{\eqref{lem:PPhiPsi:eq6}}{=\joinrel=} l_{P,\mathbf a_i}(\psi (\mathbf x)) - l_{P,\mathbf a_i}(\psi (\mathbf x)) = 0\;\;\mbox{for all}\;\;\mathbf x \in \F^n\;\;\mbox{and}\;\; 1 \le i \le d.
\end{multline}
Since $\beta$ is a basis of $\L_P$, then~\eqref{lem:PPhiPsi:eq5} implies that  $l(\psi (\mathbf x) - \tilde{\psi}(\mathbf x)) = 0$ for all $l \in \L_P$. Hence, 
\begin{equation}\label{lem:PPhiPsi:eq8}
\left(\psi (\mathbf x) - \tilde{\psi}(\mathbf x)\right) \in \rad(P)
\end{equation}
by Lemma~\ref{lem:dzeroeqk}\ref{lem:dzeroeqk:part2} and consequently the equality~\eqref{lem:PPhiPsi:eqq6} holds. 

\paragraph{Claim~\ref{lem:PPhiPsi:part3}} Let $\psi_{\rad(P)} \colon \F^n / \rad(P) \to \F^n /\rad(P)$ be defined by
\begin{equation}\label{lem:PPhiPsi:eq9}
\psi_{\rad(P)} \left(\pi_{\rad(P)}(\mathbf x)\right) = \pi_{\rad(P)} \left( \tilde{\psi}(\mathbf x)\right)\quad\mbox{for all}\;\;\mathbf x \in \F^n.
\end{equation}
This definition is correct by~\eqref{lem:PPhiPsi:eqq4}. Since $\pi_{\rad(P)}$ and $\tilde{\psi}$ are linear maps, then $\psi_{\rad(P)}$ is linear as well. The equality~\eqref{lem:PPhiPsi:eq9} also implies the uniqueness of $\psi_{\rad(P)}$.

\paragraph{Claim~\ref{lem:PPhiPsi:part4}} Let $\psi_{\rad(P)} \colon \F^n / \rad(P) \to \F^n /\rad(P)$ be a linear map satisfying the equation~\eqref{lem:PPhiPsi:eq}. We show that $\psi_{\rad(P)}$ injective. Since $\F^n / \rad(P)$ is a finite dimensional vector space, then injectivity of linear map on $\F^n / \rad(P)$ implies its bijectivity. Accordingly, the injectivity of $\psi_{\rad(P)}$ establishes the statement of the claim. 

Let $\mathbf x \in \F^n$ be such that 
\begin{equation}\label{lem:PPhiPsi:eq10}
\psi_{\rad(P)}(\pi_{\rad(P)}(\mathbf x)) = \mathbf 0
\end{equation}
We show that $\mathbf x \in \rad(P)$. Indeed, let $\mathbf a \in \F^n$ and $\lambda \in \F$. Since $\phi$ and $\psi$ satisfy the condition~\eqref{lem:PPhiPsi:eqq1},
\begin{equation}\label{lem:PPhiPsi:eq12}
P(\mathbf a + \lambda \mathbf x) = P(\phi(\mathbf a) + \lambda \psi(\mathbf x)).
\end{equation}
The equality~\eqref{lem:PPhiPsi:eq} implies that
\begin{equation}\label{lem:PPhiPsi:eq11}
\pi_{\rad(P)} \left(\psi(\mathbf x)\right) = \psi_{\rad{P}} \left(\pi_{\rad(P)}(\mathbf x)\right).
\end{equation}
Hence, by substituting~\eqref{lem:PPhiPsi:eq10} into~\eqref{lem:PPhiPsi:eq11}, we obtain that
$\pi_{\rad(P)} \left(\psi(\mathbf x)\right) = \mathbf 0,$
or, equivalently,
$
\psi(\mathbf x)  \in \rad(P).
$
Hence
\begin{equation}\label{lem:PPhiPsi:eq13}
P(\phi(\mathbf a) + \lambda \psi(\mathbf x)) \overset{\mbox{\scriptsize The definition of}\;\rad(P)}{=\joinrel=\joinrel=\joinrel=\joinrel=\joinrel=\joinrel=\joinrel=\joinrel=\joinrel=\joinrel=\joinrel=\joinrel=\joinrel=\joinrel=} P(\phi(\mathbf a)) \overset{\mbox{\scriptsize Lemma~\ref{lem:condimpliesphi}}}{=\joinrel=\joinrel=\joinrel=\joinrel=\joinrel=\joinrel=} P(\mathbf a)
\end{equation}
By aligning~\eqref{lem:PPhiPsi:eq12} and~\eqref{lem:PPhiPsi:eq13} together, we obtain the equality
\[
P(\mathbf a + \lambda \mathbf x) = P(\mathbf a).
\]
This equality holds for all $\mathbf a \in \F^n$ and $\lambda \in \F$. Hence, $\mathbf x \in \rad(P)$ by the definition of $\rad(P)$. Hence, $\pi_{\rad(P)}(\mathbf x) = \mathbf 0$. 

Thus, the injectivity of $\psi_{\rad(P)}$ is established.
\end{proof}

If we assume that $\psi = \phi$, then we can make the following remarkable conclusion.

\begin{corollary}Let $P \in \F[x_1,\ldots, x_n]$ be a polynomial such that $\deg (P) < |\F|$ and $\dim (\mathcal L_P) + \dim (\rad(P)) = n$.  If  $\phi\colon \F^n \to \F^n$ is such that
\begin{equation*}
P(\mathbf{x} + \lambda \mathbf{y}) = P(\phi(\mathbf{x}) + \lambda \phi(\mathbf{y}))\;\;\mbox{for all}\;\;\mathbf{x},\mathbf{y} \in \mathbb \F^n,\; \lambda \in \F,
\end{equation*}
then there exists a unique linear map $T_{\rad} \colon \F^n/\rad(P) \to \F^n/\rad(P)$ preserving $P_{\rad(P)}$ such that
\begin{equation*}
\pi_{\rad(P)} \circ \phi = T_{\rad(P)} \circ \pi_{\rad(P)}. 
\end{equation*}
\end{corollary}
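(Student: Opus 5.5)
Since $\phi$ plays the role of both maps in the hypothesis, I will apply Lemma~\ref{lem:PPhiPsi} with $\psi = \phi$. It gives a unique bijective linear map $T_{\rad(P)} := \phi_{\rad(P)}$ on $\F^n/\rad(P)$ with $\phi_{\rad(P)} \circ \pi_{\rad(P)} = \pi_{\rad(P)} \circ \phi$, which is the required intertwining identity. Uniqueness of $T_{\rad(P)}$ is then automatic for any map satisfying this identity: $\pi_{\rad(P)}$ is surjective, so $T_{\rad(P)}$ is determined on every class by the identity itself.

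It remains to show that $T_{\rad(P)}$ preserves $P_{\rad(P)}$. Take $\mathbf y \in \F^n/\rad(P)$ and choose $\mathbf z \in \F^n$ with $\pi_{\rad(P)}(\mathbf z) = \mathbf y$. Then
\[
P_{\rad(P)}(T_{\rad(P)}(\mathbf y)) = P_{\rad(P)}(\pi_{\rad(P)}(\phi(\mathbf z))) = P(\phi(\mathbf z)) = P(\mathbf z) = P_{\rad(P)}(\mathbf y).
\]
The first equality is the intertwining identity, the second and fourth are the definition $P = P_{\rad(P)}\circ\pi_{\rad(P)}$, and the third is Lemma~\ref{lem:condimpliesphi}. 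One could instead invoke Lemma~\ref{lem:TwoCondLift} with $\mathbf x = \mathbf 0$, but that would require knowing $\pi_{\rad(P)}(\phi(\mathbf 0)) = \mathbf 0$, so the direct computation above is cleaner.

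The main difficulty is already handled inside Lemma~\ref{lem:PPhiPsi}, namely the linearity of $\phi$ modulo the radical. What is left here is routine.
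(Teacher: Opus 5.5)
Your proof is correct and is essentially the argument the paper intends. The paper states the corollary without proof, as the case $\psi=\phi$ of Lemma~\ref{lem:PPhiPsi}: that lemma supplies the linear intertwiner, and preservation of $P_{\rad(P)}$ follows from Lemma~\ref{lem:condimpliesphi} just as in Claim~(iii) of Theorem~\ref{thm:homogenphipsi}, with homogeneity unnecessary because $\phi$ itself preserves $P$. As a minor aside, $\pi_{\rad(P)}(\phi(\mathbf 0)) = T_{\rad(P)}(\mathbf 0) = \mathbf 0$ holds automatically by linearity of $T_{\rad(P)}$, so the route via Lemma~\ref{lem:TwoCondLift} would also work, though your direct computation is cleaner.
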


\begin{remark}
If we do not assume that $\psi = \phi$, then we cannot conclude from the statement of Lemma~\ref{lem:condimpliesphi} that $\psi_{\rad(P)}$ should preserve $P$ or $\pi_{\rad(P)} \circ \phi$ should be linear. Let $P = x_1(x_2 + 1) \in \F[x_1, x_2].$ Let us choose some $k \in \F$ such that $k \neq 0, 1$ and consider maps $\phi((x_1,x_2)) = (\frac{1}{k}x_1,k x_2 + k - 1), \psi ((x_1,x_2)) = (\frac{1}{k}x_1, kx_2).$

As it is stated in the theorem below, the discussed conclusions could be made if we assume that $P$ is homogeneous.
\end{remark}

\begin{theorem}\label{thm:homogenphipsi}Let $P \in \F[x_1,\ldots, x_n]$ be a homogeneous polynomial such that $\deg (P) < |\F|$ and $\dim (\mathcal L_P) + \dim (\rad(P)) = n$.  If  $\phi, \psi \colon \F^n \to \F^n$ are such that
\begin{equation}\label{thm:homogenphipsi:eqq1}
P(\mathbf{x} + \lambda \mathbf{y}) = P(\phi(\mathbf{x}) + \lambda \psi(\mathbf{y}))\;\;\mbox{for all}\;\;\mathbf{x},\mathbf{y} \in \mathbb \F^n,\; \lambda \in \F,
\end{equation}
then there exists a unique linear map $T_{\rad} \colon \F^n/\rad(P) \to \F^n/\rad(P)$ preserving $P_{\rad(P)}$ such that
\begin{equation*}\label{thm:homogenphipsi:eq}
\pi_{\rad(P)} \circ \phi = \pi_{\rad(P)} \circ \psi = T_{\rad(P)} \circ \pi_{\rad(P)}. 
\end{equation*}
\end{theorem}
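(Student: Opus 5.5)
By Lemma~\ref{lem:PPhiPsi}, we already have a bijective linear map $\psi_{\rad(P)}$ on $\F^n/\rad(P)$ with $\psi_{\rad(P)}\circ\pi_{\rad(P)} = \pi_{\rad(P)}\circ\psi$. I will take $T_{\rad(P)} = \psi_{\rad(P)}$ and prove three things: that it preserves $P_{\rad(P)}$, that $\pi_{\rad(P)}\circ\phi = T_{\rad(P)}\circ\pi_{\rad(P)}$, and that it is unique. Uniqueness is immediate, since $\pi_{\rad(P)}$ is surjective and so $T_{\rad(P)}\circ\pi_{\rad(P)}$ determines $T_{\rad(P)}$. Throughout I write $\pi=\pi_{\rad(P)}$, $f=P_{\rad(P)}$ and $m=\deg P$.

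\emph{Preservation.} Applying Lemma~\ref{lem:TwoCondLift} with $\mathbf x=\mathbf 0$ gives $f(\pi(\phi(\mathbf 0))+\lambda T\mathbf y)=f(\lambda\mathbf y)$ for all $\lambda$ and $\mathbf y$. Homogeneity gives $f(\lambda\mathbf y)=\lambda^m f(\mathbf y)$. Set $\mathbf c=\pi(\phi(\mathbf 0))$. For $\lambda\neq0$, the left side equals $\lambda^m f(\lambda^{-1}\mathbf c+T\mathbf y)$, so $f(\mu\mathbf c+T\mathbf y)=f(\mathbf y)$ for all $\mu\neq 0$. Since $T$ is bijective, this reads $f(\mu\mathbf c+\mathbf z)=f(T^{-1}\mathbf z)$, with a right side independent of $\mu\neq0$. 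Hence $f(\mathbf z+\mu\mathbf c)=f(\mathbf z+\mathbf c)$ for all $\mathbf z$ and all $\mu\ne0$.

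If $|\F|>2$, Lemma~\ref{lem:StrangeCondImplyRadP} gives $\mathbf c\in\rad(f)=\{0\}$. If $|\F|=2$, then $m\le1$ and the case is trivial to handle separately: $P$ is linear or constant, and in the linear case $\F^n/\rad(P)$ is at most one-dimensional. Once $\mathbf c=0$, the identity reads $f(T\mathbf y)=f(\mathbf y)$, so $T$ preserves $f$.

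\emph{Identifying $\phi$ modulo the radical.} Lemma~\ref{lem:TwoCondLift} gives $f(\pi\mathbf x+\lambda\mathbf y)=f(\pi\phi(\mathbf x)+\lambda T\mathbf y)$. Since $T$ preserves $f$, the left side equals $f(T\pi\mathbf x+\lambda T\mathbf y)$. Substituting $\mathbf w=T\mathbf y$, which ranges over everything because $T$ is bijective, gives
\[
f(T\pi\mathbf x+\lambda\mathbf w)=f(\pi\phi(\mathbf x)+\lambda\mathbf w)\quad\text{for all }\lambda,\mathbf w.
\]
Put $\mathbf d=\pi\phi(\mathbf x)-T\pi\mathbf x$. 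For $\lambda\ne0$, scaling by homogeneity gives $f(\lambda^{-1}T\pi\mathbf x+\mathbf w)=f(\lambda^{-1}T\pi\mathbf x+\lambda^{-1}\mathbf d+\mathbf w)$. Writing $\mathbf u=\lambda^{-1}T\pi\mathbf x+\mathbf w$, which is arbitrary, this becomes $f(\mathbf u)=f(\mathbf u+\mu\mathbf d)$ for every $\mu\ne0$, and trivially for $\mu=0$. Thus $\mathbf d\in\rad(f)=\{0\}$, i.e.\ $\pi\circ\phi=T\circ\pi=\pi\circ\psi$.

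\emph{Main obstacle.} The key step is the preservation step: showing that $\pi(\phi(\mathbf 0))$ vanishes. The plan is to use homogeneity to turn the fixed offset into a family of offsets over all nonzero multiples, and then invoke Lemma~\ref{lem:StrangeCondImplyRadP}. I also need to confirm that $\rad(f)$ is trivial on the quotient, which is Definition~\ref{def:radicalfunc}, and to handle the small case $|\F|=2$ separately.
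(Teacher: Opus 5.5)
Your proof is correct and follows essentially the same route as the paper. You take $T_{\rad(P)}=\psi_{\rad(P)}$ from Lemma~\ref{lem:PPhiPsi}, use homogeneity, bijectivity of $\psi_{\rad(P)}$ and Lemma~\ref{lem:StrangeCondImplyRadP} to show that $\phi(\mathbf 0)$ vanishes modulo $\rad(P)$ (hence preservation of $P_{\rad(P)}$), and then combine preservation, bijectivity and Lemma~\ref{lem:TwoCondLift} to get $\pi_{\rad(P)}\circ\phi=\pi_{\rad(P)}\circ\psi$; the differences are only bookkeeping (you work in the quotient with $P_{\rad(P)}$ and split on $|\F|=2$, where $P_{\rad(P)}$ is constant or an injective linear form so the case is immediate, rather than on $\deg P$), and your last step writes out the homogeneity rescaling that the paper's Claim (iv) uses tacitly when it applies Lemma~\ref{lem:TwoCondLift} to the term $\pi_{\rad(P)}(\lambda\phi(\mathbf x))$.
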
 
\begin{proof}Let $P \in \F[x_1,\ldots, x_n]$ be polynomial and $\phi, \psi\colon \F^n \to \F^n$ be maps satisfying all the conditions of the lemma.  Let $\psi_{\rad}\colon \F^n / \rad(P) \to \F^n / \rad(P)$ be a bijective linear map satisfying the equality~\eqref{lem:PPhiPsi:eq}. Such a linear map exists by Lemma~\eqref{lem:PPhiPsi}. The following five claims will be proven sequently:
\begin{enumerate}[label=(\roman*), ref=(\roman*)]
\item\label{thm:homogenphipsi:part1} If $\deg(P) > 1$, then $\phi(\mathbf 0) \in \rad(P)$.
\item\label{thm:homogenphipsi:part2} $\psi$ preserves $P$.
\item\label{thm:homogenphipsi:part3} $\psi_{\rad(P)}$ preserves $P_{\rad}$.
\item\label{thm:homogenphipsi:part4} $\pi_{\rad(P)} \circ \phi = \pi_{\rad(P)} \circ \psi$.
\item\label{thm:homogenphipsi:part5} $\psi_{\rad(P)}$ is a unique linear map satisfying the conditions of the lemma.
\end{enumerate}
The claim~\ref{thm:homogenphipsi:part5} clearly implies the statement of the lemma.
\paragraph{Claim~\ref{thm:homogenphipsi:part1}}
Assume in that  $d_P = \deg(P) > 1$. Let us ensure that $\phi(\mathbf 0) \in \rad(P)$ using Lemma~\ref{lem:StrangeCondImplyRadP}. For this, let us show that
\begin{equation}\label{thm:homogenphipsi:eq77}
P(\mathbf a + \lambda \phi (\mathbf 0)) = P(\mathbf a + \phi (\mathbf 0))\;\;\mbox{for all}\;\;\mathbf a \in \F^n\;\;\mbox{and}\;\;0\neq \lambda \in \F.
\end{equation}

Let $\mathbf a \in \F^n$ and $0 \neq \lambda \in \F$. Since we assume that $P$ is homogeneous,
\begin{equation}\label{thm:homogenphipsi:eq5}
P\left(\mathbf a + \lambda \phi (\mathbf 0)\right) = \lambda^{d_P} P\left(\frac{1}{\lambda}\mathbf a + \phi (\mathbf 0)\right).
\end{equation}
The map $\psi_{\rad(P)}$ is bijective. Consequently, there exists $\mathbf b \in \F^n$ such that 
\begin{equation}\label{thm:homogenphipsi:eq1}
\psi_{\rad(P)}\left(\pi_{\rad(P)}(\mathbf b)\right) = \pi_{\rad(P)}(\mathbf a).
\end{equation}
Hence,
\begin{equation*}
\pi_{\rad(P)} \left(\psi(\mathbf b)\right) \overset{\eqref{lem:PPhiPsi:eq}}{=\joinrel=} \psi_{\rad(P)}\left(\pi_{\rad(P)}(\mathbf b)\right)  \overset{\eqref{thm:homogenphipsi:eq1}}{=\joinrel=} \pi_{\rad(P)}(\mathbf a),
\end{equation*}
or, equivalently
\begin{equation}\label{thm:homogenphipsi:eq2}
\left(\psi (\mathbf b) - \mathbf a\right) \in \rad(P). 
\end{equation}
Therefore,
\begin{multline}\label{thm:homogenphipsi:eq6}
P(\frac{1}{\lambda}\mathbf a + \phi (\mathbf 0))\\ \overset{\mbox{\scriptsize The definition of}\;\rad(P)\;\mbox{\scriptsize and~\eqref{thm:homogenphipsi:eq2}}}{=\joinrel=\joinrel=\joinrel=\joinrel=\joinrel=\joinrel=\joinrel=\joinrel=\joinrel=\joinrel=\joinrel=\joinrel=\joinrel=\joinrel=\joinrel=\joinrel=\joinrel=\joinrel=} P\left(\frac{1}{\lambda} \left(\psi (\mathbf b) - \mathbf a\right) + \frac{1}{\lambda}\mathbf a + \phi (\mathbf 0)\right)\\ = P\left(\frac{1}{\lambda}\psi (\mathbf b) + \phi (\mathbf 0)\right)
\end{multline}
Since $\phi$ and $\psi$ satisfy the condition~\eqref{thm:homogenphipsi:eqq1}, then
\begin{equation}\label{thm:homogenphipsi:eq3}
P\left(\frac{1}{\lambda}\psi (\mathbf b) + \phi (\mathbf 0)\right) = P\left(\frac{1}{\lambda}\mathbf b + \mathbf 0\right) = \frac{1}{\lambda^{d_P}}P\left(\mathbf b\right)
\end{equation}
and
\begin{multline}\label{thm:homogenphipsi:eq4}
P\left(\mathbf b\right) = P\left(\mathbf 0 + \mathbf b\right) = P\left(\phi(\mathbf 0) + \psi(\mathbf b)\right)\\ \overset{\mbox{\scriptsize The definition of}\;\rad(P)\;\mbox{\scriptsize and~\eqref{thm:homogenphipsi:eq2}}}{=\joinrel=\joinrel=\joinrel=\joinrel=\joinrel=\joinrel=\joinrel=\joinrel=\joinrel=\joinrel=\joinrel=\joinrel=\joinrel=\joinrel=\joinrel=\joinrel=\joinrel=\joinrel=}
P\left(\phi(\mathbf 0) + \psi(\mathbf b) + \left(\mathbf a - \psi(\mathbf b)\right)\right)\\
=  P\left(\phi(\mathbf 0) + \mathbf a\right).
\end{multline}

Thus,
\begin{multline*}
P\left(\mathbf a + \lambda \phi (\mathbf 0)\right) \overset{\eqref{thm:homogenphipsi:eq5}}{=\joinrel=}  \lambda^{d_P} P\left(\frac{1}{\lambda}\mathbf a + \phi (\mathbf 0)\right)\overset{\eqref{thm:homogenphipsi:eq6}}{=\joinrel=}\\
\lambda^{d_P} P\left(\frac{1}{\lambda}\psi (\mathbf b) + \phi (\mathbf 0)\right) \overset{\eqref{thm:homogenphipsi:eq3}}{=\joinrel=} \lambda^{d_P} \cdot \frac{1}{\lambda^{d_P}}P\left(\mathbf b\right) \overset{\eqref{thm:homogenphipsi:eq4}}{=\joinrel=} P\left(\phi(\mathbf 0) + \mathbf a\right),
\end{multline*}
which holds for all $\mathbf a \in \F^n$ and $0 \neq \lambda \in \F$. From this we conclude that~\eqref{thm:homogenphipsi:eq77} holds for $\phi(\mathbf 0)$. Since $\deg(P) > 1$, then $|\F| > \deg(P)$ implies that $|\F| > 2$. Hence, the conditions Lemma~\ref{lem:StrangeCondImplyRadP} are satisfied for $\F$ and $\phi(\mathbf 0)$  and consequently $\phi(\mathbf 0) \in \rad(P).$

\paragraph{Claim~\ref{thm:homogenphipsi:part2}}
Let us consider three cases: (I) $\deg(P) < 1$; (II) $\deg(P) = 1$ and (III) $\deg(P) > 1$.

\paragraph{Case (I): $\deg(P) < 1$} Then $P$ is a constant function. Therefore, $\phi$ tautologically preserves $P$. 

\paragraph{Case (II): $\deg(P) = 1$} Then $P$ is a linear function. Therefore, the condition~\eqref{thm:homogenphipsi:eqq1} can be rewritten as follows
\begin{multline}\label{thm:homogenphipsi:eq7}
P(\mathbf x) + \lambda P(\mathbf y) = P(\mathbf x + \lambda \mathbf y) = P(\phi(\mathbf x) + \lambda \psi(\mathbf y))\\ = P(\phi(\mathbf x)) + \lambda P(\psi(\mathbf y))\;\;\mbox{for all}\;\;\mathbf x, \mathbf y \in \F^n, \lambda \in \F.
\end{multline}
Since Lemma~\ref{lem:condimpliesphi} implies that
\begin{equation}\label{thm:homogenphipsi:eq8}
P(\mathbf x) = P(\phi(\mathbf x))\;\;\mbox{for all}\;\;\mathbf x \in \F^n,
\end{equation}
by subtracting~\eqref{thm:homogenphipsi:eq8} from~\eqref{thm:homogenphipsi:eq7} we obtain that
\begin{equation*}
\lambda P(\mathbf y) = \lambda P(\psi(\mathbf y))\;\;\mbox{for all}\;\;\mathbf y \in \F^n, \lambda \in \F.
\end{equation*}
In particular, for $\lambda = 1$ we obtain that
\begin{equation*}
P(\mathbf y) = P(\psi(\mathbf y))\;\;\mbox{for all}\;\;\mathbf y \in \F^n.
\end{equation*}
Thus, $\psi$ preserves $P$.

\paragraph{Case (III): $\deg(P) > 1$} Then Claim~\ref{thm:homogenphipsi:part1} implies that 
\begin{equation}\label{thm:homogenphipsi:eq9}
\phi(\mathbf 0) \in \rad(P).
\end{equation}
Since $\phi$ and $\psi$ satisfy the condition~\eqref{thm:homogenphipsi:eqq1},
\begin{multline}\label{thm:homogenphipsi:eq10}
 P(\phi(\mathbf 0) + \psi(\mathbf x)) = P(\phi(\mathbf 0) + 1\cdot \psi(\mathbf x)) = P(\mathbf 0 + 1\cdot \mathbf x)\\ = P(\mathbf 0 + \mathbf x) = P(\mathbf x)\;\;\mbox{for all}\;\;\mathbf x \in \F^n.
\end{multline}
Therefore,
\[
P(\psi(\mathbf x)) \overset{\eqref{thm:homogenphipsi:eq9}}{=\joinrel=}  P(\phi(\mathbf 0) + \psi(\mathbf x))\overset{\eqref{thm:homogenphipsi:eq10}}{=\joinrel=} P(\mathbf x)\;\;\mbox{for all}\;\;\mathbf x \in \F^n,
\]
which means that $\psi$ preserves $P$.
\paragraph{Claim~\ref{thm:homogenphipsi:part3}}
The proposition that $\psi$ preserves $P$ could be rewritten as follows
\begin{equation}\label{thm:homogenphipsi:eq1001}
P \circ \psi = P.
\end{equation}
In addition,
\begin{equation}\label{thm:homogenphipsi:eq11}
P_{\rad(P)} \circ  \pi_{\rad(P)} = P
\end{equation}
by the definition of $P_{\rad(P)}$ and 
\begin{equation}\label{thm:homogenphipsi:eq12}
\psi_{\rad(P)} \circ \pi_{\rad(P)} = \pi_{\rad(P)} \circ \psi
\end{equation}
by the definition of $\psi_{\rad(P)}$. Hence,
\begin{multline}\label{thm:homogenphipsi:eq13}
\left(P_{\rad(P)} \circ  \psi_{\rad(P)}\right) \circ \pi_{\rad(P)} = P_{\rad(P)} \circ  \left(\psi_{\rad(P)}\circ \pi_{\rad(P)}\right)\\
\overset{\eqref{thm:homogenphipsi:eq12}}{=\joinrel=} P_{\rad(P)} \circ  \left(\pi_{\rad(P)} \circ \psi \right)
= \left(P_{\rad(P)} \circ  \pi_{\rad(P)} \right) \circ \psi\\ \overset{\eqref{thm:homogenphipsi:eq11}}{=\joinrel=} P \circ \psi \overset{\eqref{thm:homogenphipsi:eq1001}}{=\joinrel=} P \overset{\eqref{thm:homogenphipsi:eq11}}{=\joinrel=} P_{\rad(P)} \circ  \pi_{\rad(P)}.
\end{multline}
Since $\pi_{\rad(P)}$ is a surjective map, then~\eqref{thm:homogenphipsi:eq13} implies that
\[
P_{\rad(P)} \circ  \psi_{\rad(P)} = P_{\rad(P)}.
\]
Thus, $\psi_{\rad(P)}$ preserves $P_{\rad(P)}$.

\paragraph{Claim~\ref{thm:homogenphipsi:part4}}
This is equivalent to the following
\begin{equation*}\label{thm:homogenphipsi:eq14}
\left(\phi(\mathbf x) - \psi(\mathbf x)\right) \in \rad(P)\;\;\mbox{for all}\;\;\mathbf x \in \F^n,
\end{equation*}
which is equivalent to that
\begin{equation}\label{thm:homogenphipsi:eq15}
P(\mathbf a + \lambda \left(\phi(\mathbf x) - \psi(\mathbf x)\right)) = P(\mathbf a)\;\;\mbox{for all}\;\;\mathbf a, \mathbf x \in \F^n, \lambda \in \F.
\end{equation}
Let $\mathbf a, \mathbf x \in \F^n$ and $\lambda  \in \F.$ Then
{\setlength{\jot}{15pt}
\begin{multline*}\label{thm:homogenphipsi:eq16}
P\left(\mathbf a + \lambda\left(\phi(\mathbf x) - \psi(\mathbf x)\right)\right)\\
\overset{\scriptsize\mbox{Definition of $P_{\rad(P)}$}}{=\joinrel=\joinrel=\joinrel=\joinrel=\joinrel=\joinrel=\joinrel=\joinrel=\joinrel=\joinrel=\joinrel=}
P_{\rad(P)} \circ \pi_{\rad(P)}\left[\mathbf a + \lambda\left(\phi(\mathbf x) - \psi(\mathbf x)\right)\right]\\
\overset{\scriptsize\mbox{$\pi_{\rad(P)}$ is linear}}{=\joinrel=\joinrel=\joinrel=\joinrel=\joinrel=\joinrel=\joinrel=\joinrel=\joinrel=\joinrel=\joinrel=}
P_{\rad(P)} \left[\pi_{\rad(P)}(\mathbf a) + \pi_{\rad(P)}( \lambda \phi(\mathbf x)) - \lambda\pi_{\rad(P)}( \psi(\mathbf x))\right]\\
\overset{\scriptsize\mbox{$\psi_{\rad(P)}$ is invertible}}{=\joinrel=\joinrel=\joinrel=\joinrel=\joinrel=\joinrel=\joinrel=\joinrel=\joinrel=\joinrel=\joinrel=}
P_{\rad(P)} \left[\psi_{\rad(P)}\left(\psi_{\rad(P)}^{-1}\left(\pi_{\rad(P)}(\mathbf a)\right)\right)\right.\phantom{XXXXXXXXX}\\\phantom{XXXXXXXXXXXXXXXXXX}\left.
+ \pi_{\rad(P)}(\lambda  \phi(\mathbf x)) - \lambda \pi_{\rad(P)}( \psi(\mathbf x))\right]\\
\overset{\scriptsize\mbox{Definition of $\psi_{\rad(P)}$}}{=\joinrel=\joinrel=\joinrel=\joinrel=\joinrel=\joinrel=\joinrel=\joinrel=\joinrel=\joinrel=\joinrel=}
P_{\rad(P)} \left[\psi_{\rad(P)}\left(\psi_{\rad(P)}^{-1}\left(\pi_{\rad(P)}(\mathbf a)\right)\right)\right.\phantom{XXXXXXXXX}\\\phantom{XXXXXXXXXXXXXXX}\left. + \pi_{\rad(P)}( \lambda \phi(\mathbf x)) - \lambda \psi_{\rad(P)}\left(\pi_{\rad(P)}(\mathbf x)\right)\right]\\
\overset{\scriptsize\mbox{Rearrangement of summands}}{=\joinrel=\joinrel=\joinrel=\joinrel=\joinrel=\joinrel=\joinrel=\joinrel=\joinrel=\joinrel=\joinrel=\joinrel=\joinrel=\joinrel=\joinrel=\joinrel=}
P_{\rad(P)} \left[\psi_{\rad(P)}\left(\psi_{\rad(P)}^{-1}\left(\pi_{\rad(P)}(\mathbf a)\right)\right)\right.\phantom{XXXXXXX}\\\phantom{XXXXXXXXXXXXXXX}\left. - \lambda \psi_{\rad(P)}\left(\pi_{\rad(P)}(\mathbf x)\right) + \pi_{\rad(P)}( \lambda \phi(\mathbf x)) \right]\\
\overset{\scriptsize\mbox{$\psi_{\rad(P)}$ is linear}}{=\joinrel=\joinrel=\joinrel=\joinrel=\joinrel=\joinrel=\joinrel=\joinrel=\joinrel=\joinrel=\joinrel=}
P_{\rad(P)} \left[\psi_{\rad(P)}\left(\psi_{\rad(P)}^{-1}\left(\pi_{\rad(P)}(\mathbf a)\right) - \lambda \pi_{\rad(P)}(\mathbf x)\right)\right.\phantom{XXX}\\\phantom{XXXXXXXXXXXXXXXXXXXXXXXXXX}\left. + \pi_{\rad(P)}( \lambda \phi(\mathbf x)) \right]\\
\overset{\scriptsize\mbox{Lemma~\ref{lem:TwoCondLift}}}{=\joinrel=\joinrel=\joinrel=\joinrel=\joinrel=\joinrel=\joinrel=\joinrel=\joinrel=\joinrel=\joinrel=}
P_{\rad(P)} \left[\left(\psi_{\rad(P)}^{-1}\left(\pi_{\rad(P)}(\mathbf a)\right) - \lambda \pi_{\rad(P)}(\mathbf x)\right) + \pi_{\rad(P)}( \lambda \mathbf x) \right]\\
\overset{\scriptsize\mbox{$\pi_{\rad(P)}$ is linear}}{=\joinrel=\joinrel=\joinrel=\joinrel=\joinrel=\joinrel=\joinrel=\joinrel=\joinrel=\joinrel=\joinrel=}
P_{\rad(P)} \left[\psi_{\rad(P)}^{-1}\left(\pi_{\rad(P)}(\mathbf a)\right) - \lambda \pi_{\rad(P)}(\mathbf x) + \lambda \pi_{\rad(P)}( \mathbf x) \right]\\ 
=P_{\rad(P)} \left[\psi_{\rad(P)}^{-1}\left(\pi_{\rad(P)}(\mathbf a)\right) \right]\phantom{XXXXXX}\\
\overset{\scriptsize\mbox{$\psi_{\rad(P)}$ preserves $P_{\rad(P)}$}}{=\joinrel=\joinrel=\joinrel=\joinrel=\joinrel=\joinrel=\joinrel=\joinrel=\joinrel=\joinrel=\joinrel=\joinrel=\joinrel=\joinrel=}
P_{\rad(P)} \circ \psi_{\rad(P)} \left[\psi_{\rad(P)}^{-1}\left(\pi_{\rad(P)}(\mathbf a)\right) \right]\\
= P_{\rad(P)} \circ \pi_{\rad(P)}(\mathbf a)
\overset{\scriptsize\mbox{Definition of $P_{\rad(P)}$}}{=\joinrel=\joinrel=\joinrel=\joinrel=\joinrel=\joinrel=\joinrel=\joinrel=\joinrel=\joinrel=\joinrel=}
P(\mathbf a).
\end{multline*}}
Thus, \eqref{thm:homogenphipsi:eq15} holds for all $\mathbf a, \mathbf x \in \F^n$, $\lambda  \in \F$ and consequently $\pi_{\rad(P)} \circ \phi = \pi_{\rad(P)} \circ \psi$.
\paragraph{Claim~\ref{thm:homogenphipsi:part5}}
Indeed, the map $\psi_{\rad(P)}$ is linear by Lemma~\ref{lem:PPhiPsi}, preserves $P_{\rad(P)}$ by the claim~\ref{thm:homogenphipsi:part3} and the following equalities hold
\[
\pi_{\rad(P)} \circ \phi 
\overset{\scriptsize\mbox{Claim~\ref{thm:homogenphipsi:part4}}}{=\joinrel=\joinrel=\joinrel=\joinrel=\joinrel=}
\pi_{\rad(P)} \circ \psi
\overset{\scriptsize\mbox{Lemma~\ref{lem:PPhiPsi}}}{=\joinrel=\joinrel=\joinrel=\joinrel=\joinrel=\joinrel=}
\psi_{\rad(P)} \circ \pi_{\rad(P)}. 
\]
The last equality implies that $\psi_{\rad(P)}$ is unique by Lemma~\ref{lem:PPhiPsi}.
\end{proof}

\begin{remark}\label{rem:replacingCond}It is not known for the author if is it possible to omit the condition  $\dim (\mathcal L_P) + \dim (\rad(P)) = n$ from the statement of Lemma~\ref{lem:PPhiPsi} and Theorem~\ref{thm:homogenphipsi}. Notably, this condition holds automatically in the case if $\Char (\F) = 0$ as it is formulated in Theorem~\ref{thm:homogenphipsichar0} below. 
\end{remark}

\begin{theorem}\label{thm:homogenphipsichar0}Assume that $\Char(\F) = 0$. Let $P \in \F[x_1,\ldots, x_n]$ be a homogeneous polynomial. If  $\phi, \psi \colon \F^n \to \F^n$ are such that
\begin{equation*}
P(\mathbf{x} + \lambda \mathbf{y}) = P(\phi(\mathbf{x}) + \lambda \psi(\mathbf{y}))\;\;\mbox{for all}\;\;\mathbf{x},\mathbf{y} \in \mathbb \F^n,\; \lambda \in \F,
\end{equation*}
then there exists a unique linear map  $T_{\rad} \colon \F^n/\rad(P) \to \F^n/\rad(P)$ preserving $P_{\rad(P)}$ such that
\begin{equation}\label{thm:homogenphipsichar0:eq}
\pi_{\rad(P)} \circ \phi = \pi_{\rad(P)} \circ \psi = T_{\rad(P)} \circ \pi_{\rad(P)}. 
\end{equation}
\end{theorem}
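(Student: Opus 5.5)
The plan is to deduce the statement directly from Theorem~\ref{thm:homogenphipsi} by checking its two hypotheses in characteristic zero.

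\emph{Cardinality hypothesis.} Since $\Char(\F) = 0$, the prime field $\mathbb Q$ embeds in $\F$. Hence $\F$ is infinite, and the inequality $\deg(P) < |\F|$ holds automatically for every polynomial $P$.

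\emph{Dimension hypothesis.} Corollary~\ref{cor:charzerokp} gives $\dim(\rad(P)) + \dim(\mathcal L_P) = n$ for every $P \in \F[x_1,\ldots,x_n]$ when $\Char(\F)=0$. That corollary rests on two facts:
\begin{itemize}
\item Lemma~\ref{lem:RadAnnLP}, which shows $\rad(P)$ lies in the common kernel of $\mathcal L_P$;
\item Lemma~\ref{lem:charzerokp}, which gives the reverse inclusion.
\end{itemize}
These are combined through Lemma~\ref{lem:dzeroeqk}\ref{lem:dzeroeqk:part2}.

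\emph{Conclusion.} With both hypotheses verified, Theorem~\ref{thm:homogenphipsi} applies to the homogeneous polynomial $P$ and the maps $\phi,\psi$. It yields a unique linear $T_{\rad(P)}$ on $\F^n/\rad(P)$ that preserves $P_{\rad(P)}$ and satisfies \eqref{thm:homogenphipsichar0:eq}. This is exactly the claim.

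\emph{Main obstacle.} The only real content lies in the characteristic-zero reverse inclusion, Lemma~\ref{lem:charzerokp}. Its written proof should be read carefully, since it mistakenly begins by assuming $\mathbf v \in \rad(P)$; the argument itself works as follows. Fix $\mathbf a$ and let $\mathbf v$ be annihilated by $\mathcal L_P$. Then $P'_{\mathbf a,\mathbf v}(\kappa) = l_{P,\mathbf a+\kappa\mathbf v}(\mathbf v) = 0$ for every $\kappa \in \F$. Because $\F$ is infinite, $P'_{\mathbf a,\mathbf v}$ is the zero polynomial. In characteristic zero this forces $P_{\mathbf a,\mathbf v}$ to be constant (Corollary~\ref{cor:DerZeroPolConst}), so $P(\mathbf a + t\mathbf v) = P(\mathbf a)$ for all $t$, i.e.\ $\mathbf v \in \rad(P)$.

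If I had to reprove this step from scratch, I would argue exactly this way, since it is the one place where $\Char(\F)=0$ is essential. Everything else is a direct citation of earlier results.
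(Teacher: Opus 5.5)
Your proposal is correct and is exactly the paper's proof: $\F$ is infinite, so $\deg(P) < |\F|$; Corollary~\ref{cor:charzerokp} gives the dimension condition; and Theorem~\ref{thm:homogenphipsi} then yields the claim. Your reading of Lemma~\ref{lem:charzerokp} is also right. Its proof should begin from a $\mathbf v$ annihilated by $\mathcal L_P$, not from $\mathbf v \in \rad(P)$. The infinitude of $\F$ is what upgrades the pointwise vanishing of $P'_{\mathbf a,\mathbf v}$ to vanishing as a polynomial.
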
 
\begin{proof}Since $\Char(\F) = 0$, then $\F$ is infinite. Hence, the condition $|\F| > \deg(P)$ is satisfied. The condition $\dim (\mathcal L_P) + \dim (\rad(P)) = n$ holds by Corollary~\ref{cor:charzerokp}. Therefore, $P$, $\phi$ and $\psi$ satisfy all the necessary conditions of Theorem~\ref{thm:homogenphipsi} and consequently the equality~\eqref{thm:homogenphipsichar0:eq} holds.
\end{proof}

The example below shows that in the case if $\Char (\F) > 0$ the condition $\dim (\mathcal L_P) + \dim (\rad(P)) = n$ is not necessary at least for certain polynomials.

\begin{example}\label{ex:conditionsretained}Assume that $\Char (\F) = p \neq 0$. Let $P = x^p \in \F[x].$ Then $\dim (\mathcal L_P) + \dim (\rad(P)) = 0,$ but every pair $(\phi, \psi)$ satisfying condition~\ref{thm:homogenphipsi:eqq1} also satisfy this condition for $Q = x \in F[x].$ Since $\dim (\mathcal L_Q) + \dim (\rad(Q)) = 1,$ Lemma~\ref{lem:PPhiPsi} and Theorem~\ref{thm:homogenphipsi} hold for $Q, \phi, \psi$ and therefore they also hold for $P, \phi, \psi.$
\end{example}

The following theorem is useful to generate new elements of $\mathcal L_P$ from the given one. In particular, it is employed in Section~\ref{sec:AppToCullisDet}.

\begin{lemma}\label{thm:surjpres}Let $P \in \F[x_1,\ldots, x_n]$ be a polynomial such that $\deg (P) < |\F|$. If  $\phi, \psi \colon \F^n \to \F^n$ are such that
\begin{equation*}
P(\mathbf{x} + \lambda \mathbf{y}) = P(\phi(\mathbf{x}) + \lambda \psi(\mathbf{y}))\;\;\mbox{for all}\;\;\mathbf{x},\mathbf{y} \in \mathbb \F^n,\; \lambda \in \F,
\end{equation*}
and $\phi$ is surjective, then $l \circ \psi \in \mathcal L_P$ for all $l \in \mathcal L_P.$
\end{lemma}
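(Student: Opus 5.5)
By linearity of composition with $\psi$ on the left factor, it suffices to treat the spanning elements. Although $\psi$ itself need not be linear, the map $l \mapsto l \circ \psi$ is linear in $l$: $(\alpha l + \beta m)\circ\psi = \alpha (l\circ\psi) + \beta (m\circ \psi)$ as functions on $\F^n$. Since $\mathcal L_P$ is a subspace of the space of functions, it is enough to show $l_{P,\mathbf a}\circ\psi \in \mathcal L_P$ for every $\mathbf a \in \F^n$. In particular, this will also show that $l_{P,\mathbf a}\circ\psi$ is a linear function, which is not obvious a priori.

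Fix $\mathbf a \in \F^n$. Since $\phi$ is surjective, choose $\mathbf b \in \F^n$ with $\phi(\mathbf b) = \mathbf a$. The hypothesis with $\mathbf x = \mathbf b$ and arbitrary $\mathbf y$ gives
$$P(\mathbf b + \lambda \mathbf y) = P(\phi(\mathbf b) + \lambda\psi(\mathbf y)) = P(\mathbf a + \lambda \psi(\mathbf y))\quad\mbox{for all}\;\lambda\in\F.$$
Because $\deg(P) < |\F|$, Lemma~\ref{lem:eqpd}, applied with $\mathbf x_1 = \mathbf b$, $\mathbf y_1 = \mathbf y$, $\mathbf x_2 = \mathbf a$, $\mathbf y_2 = \psi(\mathbf y)$, yields
$$l_{P,\mathbf b}(\mathbf y) = l_{P,\mathbf a}(\psi(\mathbf y))\quad\mbox{for all}\;\mathbf y\in\F^n.$$
Hence $l_{P,\mathbf a}\circ\psi = l_{P,\mathbf b}\in\mathcal L_P$.

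Combining this with the first step, any $l = \sum_i \alpha_i l_{P,\mathbf a_i}$ satisfies $l\circ\psi = \sum_i \alpha_i l_{P,\mathbf b_i} \in \mathcal L_P$, where $\phi(\mathbf b_i) = \mathbf a_i$. There is no real obstacle here. The only point requiring care is that surjectivity of $\phi$ is what allows an arbitrary base point $\mathbf a$ to be realized as $\phi(\mathbf b)$; this is exactly the reverse of how~\eqref{lem:PPhiPsi:eq1} was used in Lemma~\ref{lem:PPhiPsi}.
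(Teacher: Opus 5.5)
Your proposal is correct and follows essentially the same argument as the paper: write $l$ as a linear combination of the $l_{P,\mathbf a_i}$, use surjectivity of $\phi$ to choose $\mathbf b_i$ with $\phi(\mathbf b_i)=\mathbf a_i$, and apply Lemma~\ref{lem:eqpd} to get $l_{P,\mathbf a_i}\circ\psi = l_{P,\mathbf b_i}$. Your displayed identity $P(\mathbf b+\lambda\mathbf y)=P(\phi(\mathbf b)+\lambda\psi(\mathbf y))=P(\mathbf a+\lambda\psi(\mathbf y))$ is correct as written; the paper's corresponding line has $\phi(\mathbf a_i)$ where $\phi(\mathbf b_i)$ is meant.
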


\begin{proof}Let $m = \dim(\mathcal L_P)$ and $l = \sum_{i=1}^m c_i l_{P, \mathbf a_i}(\mathbf{v}) \in \mathcal L_P.$ Since $\phi$ is surjective, then there exist $\mathbf{b}_1, \ldots, \mathbf{b}_m$ such that $\phi (\mathbf{b}_i) = \mathbf{a}_i$ for all $1 \le i \le m.$

Since
\begin{multline*}
P(\mathbf a_i + \lambda \psi(\mathbf v)) = P(\phi \mathbf (\mathbf a_i) + \lambda \psi (\mathbf v))\\
 = P(\mathbf b_i + \lambda \mathbf v)\;\;\mbox{for all}\;\;\mathbf v \in F^n, \lambda \in \F,\;1 \le i \le m,
\end{multline*}
 by Lemma~\ref{lem:eqpd} for $l \circ \psi$ we have
\[
l\circ \psi(\mathbf v) = \sum_{i=1}^m c_i l_{P,\mathbf a_i}(\psi (\mathbf v)) = \sum_{i=1}^m c_i l_{P,\mathbf b_i}(\mathbf v)\;\;\mbox{for all}\;\;\mathbf v \in \F^n. 
\]
Therefore, $l\circ \psi \in \mathcal L_P.$
\end{proof}

\section{Nonlinear maps preserving the Cullis' determinant}
\label{sec:AppToCullisDet}

In this section we demonstrate how the theory developed above is applied to the particular polynomial matrix invariants. Namely, in Theorem~\ref{thm:homogenphipsiDetNKEven} and Theorem~\ref{thm:homogenphipsiDetNKOdd} we provide the explicit description for maps $\phi$ and $\psi$ on the space $\M_{n\,k}(\F)$ of $n\times k$-rectangular matrices satisfying the condition
\begin{equation}\label{cond:DETNK}
\det_{n\,k}(X + \lambda Y) = \det_{n\,k}(\phi(X) + \lambda \psi(Y))\;\;\mbox{for all}\;\;X, Y \in \M_{n\,k}(\F)\;\;\mbox{and}\;\;\lambda \in \F,
\end{equation}
where $\det_{n\,k}$ denotes the Cullis' determinant (Definition~\ref{def:DETNK}). \bigskip

Let us introduce the notation used in this section. 

For  $1 \le i_0 \le n$, $1 \le j_0 \le k$  we denote by $\mathsf x_{i_0\,j_0} \in \M_{n\,k}^*(\F)$  a linear function defined by $\mathsf x_{i_0\,j_0}(X) = x_{i_0\,j_0}$ for all $X = (x_{i\,j}) \in \M_{n\,k}(\F)$.

For $A \in \M_{n\,k_1}(\F)$ and $B \in \M_{n\,k_2}(\F)$ by $A|B \in \M_{n\,(k_1+k_2)}(\F)$ we denote a block matrix defined by $A|B = \begin{pmatrix} A & B\end{pmatrix}$.

By $A[J_1|J_2]$ we denote the $|J_1| \times |J_2|$ submatrix of $A$ lying on the intersection of rows with the indices from $J_1$ and the columns with the indices from $J_2$. By $A(J_1|J_2)$ we denote a submatrix of $A$ derived from it by striking out from it the rows with indices belonging to $J_1$ and the columns with the indices belonging to $J_2$. If one of the two index sets is absent, then it means an empty set, i.e. $A(J_1|)$ denotes a matrix derived from $A$ by striking out from it the rows with indices belonging to $J_1$. We may skip curly brackets, i.e. $A[1,2|3,4] = A[\{1,2\}|\{3,4\}]$. The notation with mixed brackets is also used, i.e. $A(|1]$ denotes the first column of the matrix $A$. The above notations are also used for vectors as well. In this case vectors are considered as $n\times 1$ or $1 \times n$ matrices.

\begin{definition}\label{def:DETNK}
The Cullis' determinant $\det_{n\,k}$ is a polynomial function on $\M_{n\,k}(\F)$ defined as an alternating sum of basic minors of $X$. That is,
\begin{equation*}
\det_{n\,k}(X) = \sum_{1 < i_1 \ldots < i_k \le n} (-1)^{i_1 + \ldots + i_k - 1 - \ldots - k}\det (X[i_1,\ldots, i_k|))
\end{equation*}
for all $X \in \M_{n\,k}(\F).$ We also denote $\det_{n\, k} (X)$ as follows
\[
\det_{n\,k}(X)=\begin{vmatrix}x_{1\,1} & \cdots & x_{1\,k}\\
\vdots & \cdots & \vdots\\
x_{n\,1} & \cdots & x_{n\,k}
\end{vmatrix}_{n\,k}\;\;\mbox{for all}\;\;X = (x_{i\,j}) \in \M_{n\,k}(\F).
\]
In the case if $n = k$, then $\det_{n\,k}$ is also denoted as $\det_{k}$ and is clearly equal to a classical determinant of a square matrix.
\end{definition}

Let us list the properties of $\det_{n\,k}$ which are similar to corresponding properties of the ordinary determinant (see \cite[\textsection 5, \textsection 27, \textsection 32]{cullis1913} or~\cite{NAKAGAMI2007422} for detailed proofs).

\begin{theorem}[{\cite[Theorem 13, Theorem 16]{NAKAGAMI2007422}}]
\begin{enumerate}
\item[]
\item For $X \in \M_{n}(\mathbb F),$ $\det_{n\,n}(X) = \det (X).$
\item For $X \in \M_{n\,k}(\mathbb F),$ $\det_{n\,k}(X)$ is a linear function of columns of $X$.
\item If a matrix $X \in \M_{n\,k}(\mathbb F)$ has two identical columns or one of its columns is a linear combination of other columns, then $\det_{n\,k}(X)$ is equal to zero.
\item For $X \in \M_{n\,k}(\mathbb F),$ interchanging any two columns of $X$ changes the sign of $\det_{n\,k}(X)$.
\item Adding a linear combination of columns of $X$ to another column of $X$ does not change $\det_{n\,k}(X)$.
\item For $X \in \M_{n\,k}(\mathbb F),$ $\det_{n\,k}(X)$ can be calculated using the Laplace expansion along a column of $X$. 
\end{enumerate}
\end{theorem}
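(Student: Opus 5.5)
The plan is to derive all six properties directly from Definition~\ref{def:DETNK}. The idea is that $\det_{n\,k}$ is a fixed signed linear combination of the maximal minors $\det(X[i_1,\ldots,i_k|))$, where the summation should read $1 \le i_1 < \ldots < i_k \le n$. Every property except the Laplace expansion is inherited termwise from the corresponding property of the ordinary determinant. The signs $(-1)^{i_1+\ldots+i_k-1-\ldots-k}$ depend only on the row set $I=\{i_1,\ldots,i_k\}$, never on the columns. So any column operation acts on each summand exactly as it acts on an ordinary $k\times k$ determinant.

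I will take the properties in order.

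\begin{enumerate}
\item If $n=k$, the only admissible row set is $I=\{1,\ldots,n\}$. Its sign exponent is $0$, so $\det_{n\,n}(X)=\det(X)$.
\item For a fixed $I$, the map $X\mapsto \det(X[I|))$ is linear in each column of $X$, since the $j$-th column of $X[I|)$ is a linear function of the $j$-th column of $X$. A signed sum of such maps is again linear in each column.
\item If two columns of $X$ coincide, then every $X[I|)$ has two equal columns. Each minor then vanishes, and this holds in every characteristic.
\item Interchanging two columns of $X$ interchanges the same two columns of every $X[I|)$, which flips the sign of each minor.
\item Adding a linear combination of the other columns to a column leaves the value unchanged, by property 2 together with property 3.
\item The remaining part of property 3 is the case where one column is a linear combination of the others. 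By property 2, the value expands into a combination of $\det_{n\,k}$ of matrices with two equal columns, each of which is zero.
\end{enumerate}

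The main step is the Laplace expansion along column $j$, namely
\[
\det_{n\,k}(X)=\sum_{i=1}^{n}(-1)^{i+j}x_{i\,j}\det_{n-1\,k-1}(X(i|j)).
\]
I will prove it by expanding each minor $\det(X[I|))$ along its $j$-th column and then regrouping the terms by the row index $i\in I$. Let $i=i_s$ be the $s$-th element of $I$. The term $x_{i\,j}$ then appears with the sign
\[
(-1)^{\sum I - k(k+1)/2 + s + j},
\]
multiplied by $\det$ of the submatrix of $X(i|j)$ on the rows coming from $I\setminus\{i\}$. The obstacle is the sign bookkeeping, because deleting row $i$ renumbers the later rows. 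In $X(i|j)$ the $s-1$ indices of $I$ smaller than $i$ keep their numbers, while the $k-s$ indices larger than $i$ decrease by one. Hence the renumbered set $I'$ has
\[
\sum I' = \sum I - i - (k-s),
\]
and the Cullis sign of $I'$ for $\det_{n-1\,k-1}$ has exponent $\sum I - i - k + s - k(k-1)/2$. Adding $i+j$ to this exponent gives $\sum I + s + j - k(k+1)/2$, which is exactly the sign found above. The correspondence $I\leftrightarrow (i, I')$ is a bijection between pairs with $i\in I$ and pairs with $I'$ a $(k-1)$-subset of $\{1,\ldots,n-1\}$. So summing over all $I$ and regrouping by $i$ yields the stated expansion.
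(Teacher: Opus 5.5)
Your proof is correct. It differs from the paper only in that the paper gives no proof at all: this theorem is quoted from \cite[Theorems 13, 16]{NAKAGAMI2007422} and Cullis' monograph \cite{cullis1913} and used as a black box.

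Your route is a self-contained derivation from Definition~\ref{def:DETNK}. Because the sign $(-1)^{i_1+\ldots+i_k-1-\ldots-k}$ depends only on the row set $I$, properties 1--5 are inherited summand by summand from the ordinary determinant. The column expansion then reduces to your bookkeeping identity $\bigl(\sum I' - \tfrac{k(k-1)}{2}\bigr) + i + j = \sum I + s + j - \tfrac{k(k+1)}{2}$, together with the bijection $(I,i)\leftrightarrow(i,I')$. This checks out, and it agrees with a direct computation for $n=3$, $k=2$.

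Your version has three advantages over the citation.
\begin{itemize}
\item It pins down the exact form of the expansion, $\det_{n\,k}(X)=\sum_{i=1}^{n}(-1)^{i+j}x_{i\,j}\det_{n-1\,k-1}(X(i|j))$, which the paper never writes down.
\item It makes evident that nothing depends on the characteristic or the size of $\F$. This is what the paper actually needs, since it works over arbitrary fields.
\item You rightly read the summation as $1\le i_1<\ldots<i_k\le n$. With the literal $1<i_1$ printed in the definition, property~1 would fail.
\end{itemize}

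Two minor points are worth stating explicitly.
\begin{itemize}
\item For $k=1$ the expansion uses $\det_{n-1\,0}=1$. Your formula yields this automatically, since the empty row set has exponent $0$.
\item Your numbered list does not follow the statement's numbering. There is no circularity, because your item~5 uses only the equal-columns half of property~3.
\end{itemize}
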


\begin{corollary}\label{cor:CullisBinomialExpansion}Let $n \ge k$, $A, B \in \M_{n\,k} (\F).$ 
Then
\begin{multline}\label{eq:CullisBinomialExpansion}
\det_{n\,k} (A + t B)\\
= \sum_{d = 0}^{k}t^d \left( \sum_{1 \le i_1 < \ldots < i_d \le k} \det_{n\,k}\Bigl(A(|1]\Big|\ldots \Big| B(|i_1] \Big| \ldots \Big| B(|i_d] \Big| \ldots \Big| A(|k] \Bigr)\right),
\end{multline}
where both sides of the equality are considered as formal polynomials in $t$, i.e. as elements of $\F[t]$.
\end{corollary}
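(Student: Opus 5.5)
The plan is to expand $\det_{n\,k}(A+tB)$ column by column using the multilinearity of $\det_{n\,k}$ in the columns of its argument. This is item 2 of the preceding theorem.

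Write the $j$-th column of $A+tB$ as $A(|j] + t\,B(|j]$. First we treat $t$ as a scalar in $\F$. Applying linearity in the first column splits $\det_{n\,k}(A+tB)$ into two terms, one with first column $A(|1]$ and one with first column $t\,B(|1]$; the factor $t$ comes out by homogeneity. Repeating this for columns $2,\ldots,k$, by induction on the number of columns already expanded, gives a sum over all $2^k$ choices. Each choice is a subset $\{i_1<\ldots<i_d\}\subseteq\{1,\ldots,k\}$ of columns taken from $B$, with all other columns taken from $A$. That term equals $t^d\det_{n\,k}$ of the matrix whose $i_s$-th columns are $B(|i_s]$ and whose other columns are the corresponding columns of $A$. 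Grouping the terms by $d=|\{i_1,\ldots,i_d\}|$ gives exactly the right-hand side of~\eqref{eq:CullisBinomialExpansion}, as an identity of functions of $t\in\F$.

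The main point is that the statement is claimed in $\F[t]$, formally, and $\F$ may be finite with $|\F|\le k$, so we cannot pass from functions to polynomials via Corollary~\ref{DegFGLessEqual}. I would avoid this issue entirely. The multilinearity of $\det_{n\,k}$ in the columns is a polynomial identity: by Definition~\ref{def:DETNK}, $\det_{n\,k}$ is an alternating sum of ordinary determinants of $k\times k$ submatrices. Each of these is given by the Leibniz formula and is formally multilinear in the columns over any commutative ring, in particular over $\F[t]$.

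So I would run the expansion above directly with entries in the ring $\F[t]$. Each maximal minor $\det(X[i_1,\ldots,i_k|))$ of $A+tB$ expands by column-multilinearity of the ordinary determinant over $\F[t]$. The signed sum over row subsets then commutes with the sum over column subsets $\{i_1,\ldots,i_d\}$, and reassembling gives $\det_{n\,k}$ of the mixed matrices. This yields~\eqref{eq:CullisBinomialExpansion} in $\F[t]$ with no cardinality assumption.
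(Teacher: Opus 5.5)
Your proposal is correct and follows the paper's approach: the paper's proof is the single remark that the identity follows from the column multilinearity of $\det_{n\,k}$. Your additional step of running the expansion over $\F[t]$, using the Leibniz formula for each maximal minor, properly justifies the identity as one between formal polynomials even when $|\F| \le k$, a point the paper leaves implicit.
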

\begin{proof}This is a direct consequence from the multilinearity of $\det_{n\,k}$ with respect to the columns of a matrix.
\end{proof}

\begin{corollary}\label{cor:DegDetABLEQK}If $A, B \in \M_{n\,k}(\F)$, then $\deg_t\left(\det_{n\,k} (A + t B)\right) \le k$.
\end{corollary}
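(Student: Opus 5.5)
The claim is that $\deg_t\left(\det_{n\,k}(A+tB)\right)\le k$. I would derive it directly from Corollary~\ref{cor:CullisBinomialExpansion}.

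That corollary expresses $\det_{n\,k}(A+tB)$, as a formal element of $\F[t]$, as the sum
\[
\sum_{d=0}^{k} t^d c_d .
\]
Here each coefficient $c_d$ is a finite sum of values of $\det_{n\,k}$ at matrices formed by mixing columns of $A$ and $B$. Every $c_d$ is therefore an element of $\F$, independent of $t$. The index $d$ counts how many columns are taken from $B$, and at most $k$ columns are available, so $d$ runs only from $0$ to $k$. Hence the polynomial is a combination of $1,t,\ldots,t^k$, and its degree is at most $k$. This covers the zero-polynomial case too, under either convention for $\deg 0$.

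The expansion requires $n\ge k$, which is the standing assumption for $\det_{n\,k}$. If one wanted to avoid relying on the corollary, the same bound follows from Definition~\ref{def:DETNK}. Each term there is $\pm\det\bigl((A+tB)[i_1,\ldots,i_k|)\bigr)$, a determinant of a $k\times k$ matrix whose entries are polynomials of degree at most $1$ in $t$. By the Leibniz formula, each product in that determinant has exactly $k$ factors, so each term has degree at most $k$, and so does their sum.

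There is no real obstacle here. The only point needing care is to treat the identity as one in $\F[t]$ rather than between functions on $\F$, which is exactly how Corollary~\ref{cor:CullisBinomialExpansion} is stated. That way no restriction on $|\F|$ is needed.
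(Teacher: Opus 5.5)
Your proposal is correct and matches the paper, which states this corollary without proof as an immediate consequence of the expansion~\eqref{eq:CullisBinomialExpansion}: the sum there runs only over $0 \le d \le k$ and has coefficients in $\F$, exactly as you argue. Your alternative via Definition~\ref{def:DETNK} and the Leibniz formula is also valid, and it additionally covers the degenerate case $n<k$, where the defining sum is empty.
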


%
Lemma~\ref{lem:THMNPLUSKODDKALL} and Lemma~\ref{lem:THMNPLUSKODDKALL} taken together provide the description of linear maps preserving $\det_{n\,k}$ for $k \ge 3$ and $n \ge k + 2$ and are used in the sequel.

\begin{lemma}[{\cite[Theorem~5.14]{Guterman2025} and~\cite[Theorem~5.5]{Guterman2025c}}]\label{thm:THMNPLUSKEVEN}\label{lem:THMNPLUSKEVENKALL}
Assume that $|\F| > k \ge 3, n \ge k + 2$ and  $n + k$ is even. Let $T\colon \M_{n\, k} (\F) \to \M_{n\, k} (\F)$ be a linear map. Then $\det_{n\, k} (T(X)) = \det_{n\,k}(X)$ for all $X \in \M_{n\, k} (\F)$ if and only if there exist $A \in \M_{n\, n}(\F)$ and $B \in \M_{k\, k}(\F)$ such that
\begin{equation}\label{thm:MainTheoremEvenKGe4:eq}
\det_{n\, k} \Bigl(A(|i_1,\ldots, i_k]\Bigr) \cdot \det_k \Bigl(B\Bigr) = (-1)^{i_1 + \ldots + i_k - 1 - \ldots - k}
\end{equation}
for all increasing sequences $1 \le i_1 < \ldots < i_k \le n $ and
\[
T(X) = AXB\;\;\mbox{for all}\;\; X \in \M_{n\, k} (\F).
\]
\end{lemma}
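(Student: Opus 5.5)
The plan has two directions. The ``if'' direction is a Cauchy--Binet computation. The ``only if'' direction reduces to the classical description of rank-one preservers and then reads off the constraint on $A,B$.

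\textbf{Sufficiency.} Let $T(X)=AXB$ with $A,B$ satisfying \eqref{thm:MainTheoremEvenKGe4:eq}.
\begin{enumerate}[label=(\arabic*), ref=(\arabic*)]
\item Since $\det_{n\,k}$ is multilinear and alternating in the columns, right multiplication by $B$ multiplies it by $\det_k(B)$. So $\det_{n\,k}(AXB)=\det_{n\,k}(AX)\det_k(B)$.
\item Each column of $AX$ is $\sum_i x_{i\,j}A(|i]$. Expanding by multilinearity and alternation gives the Cauchy--Binet type identity
\[
\det_{n\,k}(AX)=\sum_{1\le i_1<\ldots<i_k\le n}\det_{n\,k}\bigl(A(|i_1,\ldots,i_k]\bigr)\det\bigl(X[i_1,\ldots,i_k|)\bigr).
\]
\item Multiplying by $\det_k(B)$ and substituting \eqref{thm:MainTheoremEvenKGe4:eq}, every coefficient becomes the Cullis sign $(-1)^{i_1+\ldots+i_k-1-\ldots-k}$. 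This reproduces $\det_{n\,k}(X)$ exactly.
\end{enumerate}

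\textbf{Necessity, first steps.} Let $T$ be linear with $\det_{n\,k}\circ T=\det_{n\,k}$.
\begin{enumerate}[label=(\arabic*), ref=(\arabic*)]
\item Show $\rad(\det_{n\,k})=\{0\}$ when $n+k$ is even. I would compute $\mathcal L_{\det_{n\,k}}$ directly: the functionals $l_{P,\mathbf a}$ are linear combinations of signed $(k-1)$-minors of $\mathbf a$ paired with entries. Choosing suitable sparse $\mathbf a$ should show that every coordinate functional $\mathsf x_{i\,j}$ lies in $\mathcal L_P$, so $\dim\mathcal L_P=nk$. Then Lemma~\ref{lem:dzeroeqk} and Lemma~\ref{lem:RadAnnLP} give a trivial radical.
\item By Waterhouse's Proposition~1, $T$ is then invertible, and $T^{-1}$ also preserves $\det_{n\,k}$.
\item Apply Lemma~\ref{thm:surjpres} with $\phi=\psi=T$: composition with $T$ maps the gradient functionals back into $\mathcal L_P$. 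More usefully, $T$ maps the set of pairs $(\mathbf a,\mathbf v)$ with a prescribed polynomial $\det_{n\,k}(\mathbf a+t\mathbf v)\in\F[t]$ to itself. Because $|\F|>k$, Corollary~\ref{cor:DegDetABLEQK} and Corollary~\ref{DegFGLessEqual} make all these identities polynomial identities in $t$.
\end{enumerate}

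\textbf{Necessity, main obstacle: $T$ preserves rank one.} The hardest step is to show $T$ maps rank-one matrices to rank-one matrices. I would first characterize rank $\le 1$ intrinsically through $\det_{n\,k}$. By \eqref{eq:CullisBinomialExpansion}, if $Y$ has rank one then $\det_{n\,k}(X+tY)$ is affine in $t$ for every $X$. Conversely, if $\operatorname{rk}Y\ge 2$, I expect some $X$ for which the $t^2$ coefficient is nonzero. Constructing that witness $X$ is where $n\ge k+2$, $k\ge 3$ and the parity of $n+k$ enter: parity is needed because for odd $n+k$ the radical is nonzero and the characterization fails. Since the defining property is expressed purely through $\det_{n\,k}(X+tY)$ for all $X$, and $T$ is bijective, $T$ preserves it.

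\textbf{Necessity, conclusion.} Given the rank-one step:
\begin{enumerate}[label=(\arabic*), ref=(\arabic*)]
\item The classical theorem on bijective linear rank-one preservers of $\M_{n\,k}(\F)$ gives $T(X)=AXB$ with $A,B$ invertible. The transpose alternative cannot occur since $n\ne k$.
\item Apply the sufficiency computation in reverse. Preservation gives
\[
\sum_{i_1<\ldots<i_k}\Bigl(\det_{n\,k}\bigl(A(|i_1,\ldots,i_k]\bigr)\det_k(B)-(-1)^{i_1+\ldots+i_k-1-\ldots-k}\Bigr)\det\bigl(X[i_1,\ldots,i_k|)\bigr)=0
\]
for all $X$.
\item The maximal minors are linearly independent polynomial functions: evaluate at matrices supported on a single row set $\{i_1,\ldots,i_k\}$. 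Hence each bracket vanishes, which is exactly \eqref{thm:MainTheoremEvenKGe4:eq}.
\end{enumerate}
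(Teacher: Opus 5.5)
The paper does not prove this lemma; it imports it from the Guterman--Yurkov papers, so your argument has to stand on its own. Three parts of it are fine: the sufficiency direction via Cauchy--Binet, the invertibility of $T$ via $\rad(\det_{n\,k})=\{0\}$ and Waterhouse, and the final step reading off the constraint on $A,B$ from the independence of maximal minors.

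\textbf{The gap is the rank-one step.} Your characterization ``$\rk Y\le 1$ iff $\det_{n\,k}(X+tY)$ is affine in $t$ for every $X$'' is false already for $k=3$, which lies inside the lemma's range. Take any odd $n\ge 5$, $\mathbf u=e_1+e_2$, $\mathbf v=e_3+e_4$. For distinct $a,b,c$ we have $\det_{n\,3}(e_a|e_b|e_c)=(-1)^{a+b+c}\sigma(a,b,c)$, where $\sigma(a,b,c)=\pm1$ depends only on the relative order of $a,b,c$. Fix $b\in\{3,4\}$ and a coordinate $c$:
\begin{enumerate}[label=(\roman*)]
\item If $c\ge 3$, the two choices $a=1,2$ give the same relative order but opposite signs $(-1)^a$, so they cancel.
\item If $c\le 2$, then for the one $a\in\{1,2\}$ with $a\ne c$ (the other term vanishes), the two choices $b=3,4$ give the same relative order but opposite signs $(-1)^b$, so they cancel.
\end{enumerate}
Hence $\det_{n\,3}(\mathbf u|\mathbf v|\mathbf w)=0$ for every $\mathbf w\in\F^n$. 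For the rank-two matrix $Y=(\mathbf u|\mathbf v|\mathbf 0)$ this gives
\[
\det_{n\,3}(X+tY)=\det_{n\,3}(X)+t\bigl(\det_{n\,3}(\mathbf u|\mathbf x_2|\mathbf x_3)+\det_{n\,3}(\mathbf x_1|\mathbf v|\mathbf x_3)\bigr)
\]
for all $X=(\mathbf x_1|\mathbf x_2|\mathbf x_3)$. So the witness $X$ you expect to construct does not exist.

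More structurally, your set $\{Y:\det_{n\,k}(X+tY)\ \text{is affine in } t\ \text{for all } X\}$ is the set of $Y$ whose every pair of columns $\mathbf y_a,\mathbf y_b$ makes $\det_{n\,k}$ vanish identically once they are inserted. For $k=3$ this set contains the whole $6$-dimensional space $\{Y:\colsp(Y)\subseteq\Span(\mathbf u,\mathbf v)\}$. Knowing that $T$ and $T^{-1}$ preserve this set therefore does not force $T$ to preserve rank one, and the appeal to the classical rank-one preserver theorem is unsupported. For $k\ge 4$ you give no argument either, so the step is open there too. You need a genuinely different mechanism to reach the form $X\mapsto AXB$: either a finer description of rank-one matrices, intrinsic to $\det_{n\,k}$, that excludes such planes, or a different reduction altogether.
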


\begin{lemma}[{Cf.~\cite[Theorem~4.12]{Guterman2025b} and~\cite[Theorem~5.16]{Guterman2025c}}]\label{thm:THMNPLUSKODD}\label{lem:THMNPLUSKODDKALL}Assume that $|\F| > k \ge 3$, $n \ge k + 2$ and $n + k$ is odd. Let $T\colon \M_{n\,k} (\F) \to \M_{n\,k} (\F)$ be a linear map. Then $\det_{n\, k} (T(X)) = \det_{n\,k}(X)$ for all $X \in \M_{n\,k} (\F)$ if and only if  there exist $A \in \M_{n\, n}(\F)$ and $B \in \M_{k\, k}(\F)$ such that
\begin{equation}\label{thm:MainTheoremCullisNKOdd:eqq}
\det_{n\,k} \Bigl(A(|i_1,\ldots, i_k]\Bigr) \det_k \Bigl(B\Bigr) = (-1)^{i_1 + \ldots + i_k - 1 - \ldots - k}
\end{equation}
for all increasing sequences $1 \le i_1 < \ldots < i_k \le n $ and a linear map\linebreak $\omega\colon \M_{n\,k}(\F) \to W_{n\,k}$ such that
\begin{equation}\label{thm:MainTheoremCullisNKOdd:eq}
T(X) = AXB + \omega(X)\;\;\mbox{for all}\;\;X \in \M_{n\,k} (\F).
\end{equation}
\end{lemma}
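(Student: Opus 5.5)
The plan splits into the two directions. The sufficiency ("if") direction is a direct computation from the multilinearity properties of $\det_{n\,k}$ listed in the theorem of Nakagami above. The necessity ("only if") direction goes through the radical and the space $\mathcal L_{\det_{n\,k}}$, followed by a Frobenius-type classification. Throughout I read $W_{n\,k}$ as $\rad(\det_{n\,k})$, which is nonzero exactly when $n+k$ is odd (cf.\ Lemma~\ref{lem:NKOddRadDetNK}).

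For sufficiency, I first treat $X \mapsto AX$. The $j$-th column of $AX$ is $\sum_i x_{i\,j} A(|i]$. Expanding $\det_{n\,k}(AX)$ by multilinearity in the columns kills every term with a repeated column of $A$, since such a matrix has two identical columns. Sorting the remaining column indices with the alternating property gives
\[
\det_{n\,k}(AX)=\sum_{1\le i_1<\ldots<i_k\le n}\det\bigl(X[i_1,\ldots,i_k|)\bigr)\,\det_{n\,k}\bigl(A(|i_1,\ldots,i_k]\bigr).
\]
Next, the columns of $YB$ are linear combinations of the columns of $Y$, so the same multilinearity and alternation argument gives $\det_{n\,k}(YB)=\det_k(B)\det_{n\,k}(Y)$. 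Combining the two with condition~\eqref{thm:MainTheoremCullisNKOdd:eqq} turns the sum into exactly the alternating sum of maximal minors in Definition~\ref{def:DETNK}, so $\det_{n\,k}(AXB)=\det_{n\,k}(X)$. Finally, $\omega(X)\in W_{n\,k}=\rad(\det_{n\,k})$, so adding it does not change the value, by the definition of the radical.

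For necessity, let $T$ be linear with $\det_{n\,k}\circ T=\det_{n\,k}$. Then $\phi=\psi=T$ satisfies~\eqref{cond:DETNK}, because $T(X+\lambda Y)=T(X)+\lambda T(Y)$. The steps are as follows.
\begin{enumerate}[label=(\arabic*)]
\item I would show that $\dim\mathcal L_{\det_{n\,k}}+\dim\rad(\det_{n\,k})=nk$. Here $\mathcal L_{\det_{n\,k}}$ is spanned by the cofactor-type functionals $Y\mapsto\sum_j\det_{n\,k}(A(|1]|\ldots|Y(|j]|\ldots|A(|k])$, and I would verify the dimension count via Lemma~\ref{lem:dzeroeqk}.
\item Lemma~\ref{lem:PPhiPsi} then gives that $T$ induces a bijective linear map $T_{\rad}$ on $\M_{n\,k}(\F)/W_{n\,k}$ preserving the reduced polynomial.
\item Since $T$ is then surjective modulo the radical, Lemma~\ref{thm:surjpres} (applied after adjusting by radical elements, which does not change any $l\in\mathcal L_{\det_{n\,k}}$) shows that $l\mapsto l\circ T$ maps $\mathcal L_{\det_{n\,k}}$ into itself.
\item Using $T_{\rad}$ and this invariance, I would show that $T$ maps the matrices whose columns all lie in a fixed line (rank one), modulo $W_{n\,k}$, to matrices of the same kind. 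I would do this by characterizing such matrices through the vanishing loci of $\det_{n\,k}$ on the affine lines $X+tY$, using the degree bound of Corollary~\ref{cor:DegDetABLEQK} and $|\F|>k$.
\item A standard structure theorem for rank-one preservers then yields $T(X)\equiv AXB \pmod{W_{n\,k}}$. I set $\omega(X)=T(X)-AXB$, which lies in $W_{n\,k}$ and is linear.
\item Comparing $\det_{n\,k}(AXB)$ with $\det_{n\,k}(X)$ on the matrices $X=E_{i_1 1}+\ldots+E_{i_k k}$ via the sufficiency formula recovers~\eqref{thm:MainTheoremCullisNKOdd:eqq}.
\end{enumerate}

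The main obstacle is step (4): proving that $T$ preserves the rank-one structure modulo the radical. The radical has to be factored out carefully there, and the hypotheses $n\ge k+2$ and $k\ge3$ are needed to exclude exotic preservers; this is the content of the cited works~\cite{Guterman2025b,Guterman2025c}. My first attempt would describe maximal linear subspaces on which $\det_{n\,k}$ vanishes identically, and show that $T_{\rad}$ permutes them.
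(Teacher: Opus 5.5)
The paper gives no proof of this lemma. It is quoted from \cite[Theorem~4.12]{Guterman2025b} and \cite[Theorem~5.16]{Guterman2025c} and used as a black box in Theorem~\ref{thm:homogenphipsiDetNKOdd}.

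Your ``if'' direction is correct and self-contained. It combines the Cauchy--Binet type expansion of $\det_{n\,k}(AX)$, the identity $\det_{n\,k}(YB)=\det_k(B)\det_{n\,k}(Y)$, condition~\eqref{thm:MainTheoremCullisNKOdd:eqq}, and $W_{n\,k}\subseteq\rad(\det_{n\,k})$ from Corollary~\ref{cor:WNKIsInRadDetNK}.

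In the ``only if'' direction, steps (1)--(2) are what Section~\ref{sec:AppToCullisDet} and Lemma~\ref{lem:PPhiPsi} supply. Steps (5)--(6) also work once you note that $\M_{n\,k}(\F)/W_{n\,k}\cong(\F^n/\langle\mathbf 1\rangle)\otimes\F^k$, with $\mathbf 1=(1,\ldots,1)^t$. Then ``rank one modulo $W_{n\,k}$'' means ``pure tensor'', and the classical theorem on bijective rank-one preservers applies, with no transpose since $n-1\neq k$.

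The genuine gap is step~(4). It carries the entire content of the cited theorems, and you only announce it. Moreover, the mechanism you propose fails. The line criterion, in the Frobenius--Dieudonn\'e style, says: $\deg_t\det_{n\,k}(Y+tX)\le 1$ for all $Y$, or equivalently $t^{k-1}$ divides $\det_{n\,k}(X+tY)$ for all $Y$. Rank-one classes modulo $W_{n\,k}$ satisfy this, but so do other matrices.

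Take $n=6$, $k=3$, which meets all the hypotheses. The alternating form $\Delta(\mathbf x_1,\mathbf x_2,\mathbf x_3)=\det_{6\,3}(\mathbf x_1|\mathbf x_2|\mathbf x_3)$ satisfies $\Delta(\mathbf 1,\cdot,\cdot)\equiv 0$. Hence it descends to a $3$-form $\bar\Delta$ on the $5$-dimensional space $V=\F^6/\langle\mathbf 1\rangle$. Dualize via $\Lambda^3V^*\cong\Lambda^2V$ and use the normal form of bivectors, which is valid over any field. In a suitable basis this gives $\bar\Delta=e^1\wedge(e^2\wedge e^3+\varepsilon\, e^4\wedge e^5)$ with $\varepsilon\in\{0,1\}$, so $\bar\Delta(e_2,e_4,\cdot)\equiv 0$.

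Let $\mathbf a,\mathbf b\in\F^6$ lift $e_2,e_4$. Then $X=(\mathbf a|\mathbf b|\mathbf 0)$ has rank two modulo $W_{6\,3}$, because $\mathbf a,\mathbf b,\mathbf 1$ are independent. Yet every term of $\det_{6\,3}(Y+tX)$ using two or more columns of $X$ vanishes. Hence $\det_{6\,3}(Y+tX)=\det_{6\,3}(Y)+c(Y)\,t$ for every $Y$. The behaviour of $\det_{n\,k}$ along lines, together with the degree bound and $|\F|>k$, therefore cannot single out the rank-one classes.

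Some genuinely different invariant is needed. Your fallback via maximal subspaces on which $\det_{n\,k}$ vanishes is not developed, and controlling those subspaces is exactly the hard part. As written, the proposal proves only the ``if'' direction.
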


As the reader can observe, the description of linear maps preserving $\det_{n\,k}$ depends on parity of $n + k$. Thus, we consider this cases separately.

\subsection{$n + k$ is even}

\begin{lemma}[{\cite[Lemma~4.1]{Guterman2025}}]\label{lem:DetNKXiEqX1}Assume that $n \ge k + 2$ and $n+k$ is even. Let $x_1, \ldots, x_n \in \F$ and
\[
X = \begin{pmatrix}
x_1 & 0 & 0 & \cdots & 0\\
x_2 & 1 & 0 &\cdots & 1\\
x_3 & 0 & 1 &\cdots & 1\\
\vdots & \vdots & \vdots & \ddots & \vdots\\
x_k & 0 & 0 & \cdots & 1\\
\vdots & \vdots & \vdots & \ddots & \vdots\\
x_{n} & 0 & 0 & \cdots & 1
\end{pmatrix} \in \M_{n\,k}(\F).
\]
Then
\[
\det_{n\,k}(X) = x_1.
\]
\end{lemma}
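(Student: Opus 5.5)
The plan is to expand $\det_{n\,k}(X)$ by linearity in the first column and evaluate each resulting Cullis' determinant directly from Definition~\ref{def:DETNK}. I read the displayed matrix as having columns $\mathbf e_2,\ldots,\mathbf e_{k-1}$ in positions $2,\ldots,k-1$, and last column $\mathbf u=\mathbf e_2+\ldots+\mathbf e_n$, where $\mathbf e_r$ are the standard basis vectors of $\F^n$.

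By linearity of $\det_{n\,k}$ in the columns,
$$\det_{n\,k}(X)=\sum_{i=1}^n x_i D_i,\qquad D_i=\det_{n\,k}\bigl(\mathbf e_i|\mathbf e_2|\ldots|\mathbf e_{k-1}|\mathbf u\bigr).$$
It suffices to show that $D_1=1$ and $D_i=0$ for all $i\ge 2$. There are three cases.

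\emph{Case $2\le i\le k-1$.} The matrix has two equal columns, so $D_i=0$.

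\emph{Case $i=1$.} Subtract columns $2,\ldots,k-1$ from the last column; this does not change $\det_{n\,k}$. The last column becomes $\mathbf u'=\sum_{r=k}^n\mathbf e_r$. Expanding by linearity in the last column gives
$$D_1=\sum_{r=k}^n\det_{n\,k}(\mathbf e_1|\ldots|\mathbf e_{k-1}|\mathbf e_r).$$
Only the basic minor on rows $\{1,\ldots,k-1,r\}$ is nonzero, and it equals $1$. Its sign in Definition~\ref{def:DETNK} is $(-1)^{r-k}$. So $D_1=\sum_{r=k}^n(-1)^{r-k}$, a sum of $n-k+1$ alternating terms starting with $+1$. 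Since $n+k$ is even, $n-k$ is even, the number of terms is odd, and $D_1=1$.

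\emph{Case $i\ge k$.} Again reduce the last column to $\mathbf u'$. Discard the summand $\mathbf e_i$ of $\mathbf u'$, since it repeats the first column. This gives
$$D_i=\sum_{r\ge k,\ r\ne i}\det_{n\,k}(\mathbf e_i|\mathbf e_2|\ldots|\mathbf e_{k-1}|\mathbf e_r).$$
Each term has a single nonzero basic minor, on rows $\{2,\ldots,k-1,i,r\}$. That minor is the sign of the permutation sorting the columns: moving $\mathbf e_i$ past $\mathbf e_2,\ldots,\mathbf e_{k-1}$ contributes $(-1)^{k-2}$, and an extra factor $\varepsilon=-1$ appears when $r<i$. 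The Cullis sign is $(-1)^{i+r-1-k}$. Hence each term equals $(-1)^{i+r+1}\varepsilon$.

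The terms with $r>i$ form an alternating sequence starting at $+1$ (for $r=i+1$) of length $n-i$. The terms with $r<i$ form an alternating sequence starting at $-1$ (for $r=i-1$) of length $i-k$. The first sum is $1$ or $0$ according as $n-i$ is odd or even. The second is $-1$ or $0$ according as $i-k$ is odd or even. Because $n-k$ is even, $n-i$ and $i-k$ have the same parity, so the two contributions cancel and $D_i=0$.

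The main obstacle is this last sign bookkeeping: combining the permutation sign of the square minor with the Cullis sign $(-1)^{i_1+\ldots+i_k-1-\ldots-k}$ correctly. The parity hypothesis on $n+k$ enters exactly there and in the count for $D_1$. I would double-check the signs on a small instance, for example $n=5$ and $k=3$, before writing the general argument. The hypothesis $n\ge k+2$ only guarantees that the displayed shape makes sense.
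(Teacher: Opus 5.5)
Your proof is correct. The three cases exhaust $i$. Reducing the last column to $\mathbf e_k+\ldots+\mathbf e_n$ is legitimate, by the column-operation property or by multilinearity plus vanishing on equal columns. In each term you identify the unique nonzero basic minor correctly. The sign bookkeeping also checks out: $(-1)^{k-2}\varepsilon\cdot(-1)^{i+r-1-k}=\varepsilon(-1)^{i+r+1}$, and the cancellation uses exactly that $n-i$ and $i-k$ have the same parity. For $n=5$, $k=3$ your formulas give $D_3=1-1$, $D_4=1-1$, $D_5=-1+1$, which agrees with a direct expansion over the six row triples containing row $2$.

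The paper gives no proof of this lemma; it imports it from \cite{Guterman2025}, so there is no in-paper argument to compare against. Your computation is in effect a Laplace expansion along the first column carried out from the definition: your $D_i$ equals the signed cofactor $(-1)^{i+1}\det_{n-1\,k-1}\bigl(X(i|1)\bigr)$.

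Two small remarks. First, you rightly read the index range in the definition of $\det_{n\,k}$ as $1\le i_1<\ldots<i_k\le n$; the paper's $1<i_1$ is a typo. Second, your argument never uses $n\ge k+2$. It works for all $n\ge k$ with $k\ge 2$ and $n-k$ even; for $n=k$ your sums give $D_1=1$ and $D_i=0$ for $i\ge 2$. This is worth saying, because Lemma~\ref{lem:NKEven11InL} invokes this lemma under the weaker hypothesis $n\ge k$, and your proof covers that. So your closing sentence slightly misstates the role of $n\ge k+2$. The displayed shape already makes sense for $n=k$; what it genuinely needs is $k\ge 2$, so that the last column is distinct from the first.
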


\begin{definition}[{Cf.~\cite[Definition~4.3]{Guterman2025}}]\label{def:SCSDefEven}Let $n \ge k$ and  $1 \le j \le k$. By $\SCS^{\mathrm{even}}_{i\,j} \colon \M_{n\,k}(\F) \to \M_{n\,k}(\F)$ we denote a linear map defined by
\begin{equation*}
\SCS_{i\,j}\begin{psmallmatrix}
x_{1\,1} & \cdots  & x_{1\,k}\\
  \vdots & \ddots & \vdots\\
x_{n\,1} & \cdots & x_{n\,k}\\
\end{psmallmatrix} =(-1)^{n-i}\cdot  
\begin{psmallmatrix}
(-1)^{1 - \delta_{1\,j}}x_{i\,j} & \cdots & x_{i\,1} & \cdots & x_{i\,k}\\
\vdots & \ddots & \vdots & \ddots & \vdots\\
(-1)^{1 - \delta_{1\,j}}x_{n\,j} & \cdots & x_{n\,1} & \cdots & x_{n\,k}\\
-(-1)^{1 - \delta_{1\,j}}x_{1\,j} & \cdots & -x_{1\,1} & \cdots & -x_{1\,k}\\
  \vdots & \ddots & \vdots & \ddots & \vdots\\
-(-1)^{1 - \delta_{1\,j}}x_{(i-1)\,j} & \cdots & -x_{(i-1)\,1} & \cdots &  -x_{(i-1)\,k}\\
\end{psmallmatrix}
\end{equation*}
for all $X = (x_{i\,j}) \in \M_{n\,k}(\F)$. That is, $\SCS_{i\,j}(X)$ is obtained from $X$ by performing the following sequence of operations:
\begin{enumerate}
\item the row cyclical shift sending $i$-th row of $X$ to the first row of the result;
\item multiplying the bottom $i-1$ rows by $-1$;
\item exchanging the first and the $j$-th column;
\item multiplying the first column  by $(-1)^{1 - \delta_{1\,j}}$;
\item multiplying all the entries by $(-1)^{n-i}$.
\end{enumerate}
\end{definition}

\begin{lemma}[{Cf.~\cite[Lemma~4.5]{Guterman2025}}]\label{lem:ReduceTo11ByShift}Assume that $n \ge k$ and $n + k$ be even. Then $\SCS^{\mathrm{even}}_{i\,j}$ is an invertible linear map preserving $\det_{n\,k}$ for all $1 \le i \le n$, $1 \le j \le n$.
\end{lemma}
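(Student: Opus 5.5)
Linearity and invertibility of $\SCS^{\mathrm{even}}_{i\,j}$ are immediate from Definition~\ref{def:SCSDefEven}: it is a composition of a row permutation, sign changes of rows and columns, a column transposition and a global scalar $\pm1$. Each of these is an invertible linear map of $\M_{n\,k}(\F)$. The real work is the preservation of $\det_{n\,k}$. I will check the effect of each of the five elementary steps on $\det_{n\,k}$ separately and then multiply the resulting signs.

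\emph{Column steps.} By the listed properties of $\det_{n\,k}$ (the theorem from~\cite{NAKAGAMI2007422}), exchanging the first and the $j$-th column multiplies $\det_{n\,k}$ by $-1$ when $j\neq 1$ and does nothing when $j=1$. Multiplying the first column by $(-1)^{1-\delta_{1\,j}}$ multiplies $\det_{n\,k}$ by $(-1)^{1-\delta_{1\,j}}$, by multilinearity. These two factors cancel.

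\emph{Global sign.} Multiplying all entries by $(-1)^{n-i}$ multiplies $\det_{n\,k}$ by $(-1)^{k(n-i)}$, since $\det_{n\,k}$ is homogeneous of degree $k$.

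\emph{Row steps (the main obstacle).} The remaining work is the effect of the cyclic row shift combined with negating the last $i-1$ rows. It suffices to treat $i=2$ and iterate, since the composite for general $i$ is the $(i-1)$-fold iterate of the one-step operation \[(r_1,\dots,r_n)\mapsto(r_2,\dots,r_n,-r_1).\] I will show that this one-step map multiplies $\det_{n\,k}$ by $(-1)^k$.

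To do so I take a maximal minor on rows $i_1<\dots<i_k$ of the new matrix $Y$. There are two cases.
\begin{itemize}
\item If $i_k<n$, it is the minor of $X$ on rows $i_1+1,\dots,i_k+1$. The sign $(-1)^{\sum i_s-\sum s}$ then changes by $(-1)^k$.
\item If $i_k=n$, the rows of $X$ used are $1,i_1+1,\dots,i_{k-1}+1$, with row $1$ placed last and negated. Moving it to the top costs $(-1)^{k-1}$ and the negation costs $-1$, a total factor $(-1)^k$. The index sum in the sign exponent changes from $i_1+\dots+i_{k-1}+n$ to $1+(i_1+1)+\dots+(i_{k-1}+1)$, that is, by $k-n$.
\end{itemize}
So in the second case the overall factor is $(-1)^k(-1)^{n-k}$. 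Since $n+k$ is even, $(-1)^{n-k}=1$, and both cases give the factor $(-1)^k$. The correspondence between row sets is a bijection, so $\det_{n\,k}(Y)=(-1)^k\det_{n\,k}(X)$. Iterating $i-1$ times, the row steps contribute $(-1)^{k(i-1)}$.

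\emph{Conclusion.} Combining all steps, the total factor is
\[(-1)^{k(i-1)}\cdot(-1)^{k(n-i)}=(-1)^{k(n-1)}.\]
Since $n\equiv k \pmod 2$, we have $k(n-1)\equiv k(k-1)\equiv 0 \pmod 2$, so the factor is $1$. The parity hypothesis is therefore used twice, and the step I expect to need the most care is the sign bookkeeping for minors that involve the wrapped-around row.
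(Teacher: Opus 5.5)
Your proof is correct. The paper gives no argument of its own for this lemma; it only imports it from \cite[Lemma~4.5]{Guterman2025}, so there is nothing in the text to compare against. Your computation is a complete, self-contained verification that uses only Definition~\ref{def:DETNK} and the column properties of $\det_{n\,k}$ already listed in the paper.

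The individual steps check out:
\begin{itemize}
\item The column swap and the sign on the first column contribute cancelling factors.
\item The global scalar contributes $(-1)^{k(n-i)}$.
\item The row part is indeed the $(i-1)$-fold iterate of $(r_1,\dots,r_n)\mapsto(r_2,\dots,r_n,-r_1)$. On row sets this acts as the cyclic shift $s\mapsto s+1 \pmod n$ on $k$-subsets, so your term-by-term comparison of maximal minors is legitimate. Non-wrapping minors pick up the factor $(-1)^k$, and wrapping ones pick up $(-1)^k(-1)^{n-k}$.
\end{itemize}

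Your parity bookkeeping is also right, and the hypothesis is genuinely needed at the wrap-around step. For $n+k$ odd the two classes of minors would acquire opposite factors, so outside characteristic $2$ the one-step map would no longer rescale $\det_{n\,k}$ by a constant. This is why the odd case uses $\SCS^{\mathrm{odd}}_{i\,j}$, which has no row negations.

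Finally, the range ``$1\le j\le n$'' in the statement is a typo for $1\le j\le k$, and you read it that way.
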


\begin{lemma}\label{lem:NKEven11InL}Assume that  $n \ge k$ and  $n + k$ is even. Then $\mathsf x_{1\,1} \in \mathcal L_{\det_{n\,k}}$.
\end{lemma}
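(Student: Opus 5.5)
The plan is to exhibit a single point $\mathbf a = A \in \M_{n\,k}(\F)$ such that $l_{\det_{n\,k}, A} = \mathsf x_{1\,1}$. Then $\mathsf x_{1\,1}$ is one of the spanning functions in Definition~\ref{def:LP}, so it lies in $\mathcal L_{\det_{n\,k}}$.

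Lemma~\ref{lem:DetNKXiEqX1} suggests the candidate. Let $A$ be the matrix of that lemma with $x_1 = \ldots = x_n = 0$: its first column is zero and columns $2,\ldots,k$ are as displayed. (This is where $n\ge k+2$ is used; for $n = k$ the analogous choice is $A = \Span$-free, namely $A=E_{22}+\ldots+E_{kk}$, and the classical cofactor argument applies directly. I would treat that case separately, or assume $n\ge k+2$ if that is the intended range.)

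Fix an arbitrary $V\in \M_{n\,k}(\F)$. By Corollary~\ref{cor:CullisBinomialExpansion}, the coefficient of $t$ in $\det_{n\,k}(A+tV)$ is
$$\sum_{j=1}^k \det_{n\,k}\bigl(A(|1]\,|\ldots|\,V(|j]\,|\ldots|\,A(|k]\bigr).$$
For $j\ge 2$ the matrix in the $j$-th summand still contains the zero first column $A(|1]$. Multilinearity in columns then makes that summand vanish. The $j=1$ summand is $\det_{n\,k}$ of the matrix obtained from $A$ by putting $V(|1]$ in the first column. By Lemma~\ref{lem:DetNKXiEqX1}, this equals $v_{1\,1}$. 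Hence $l_{\det_{n\,k},A}(V) = \frac{\partial \det_{n\,k}}{\partial V}(A) = v_{1\,1}$ for all $V$, that is, $l_{\det_{n\,k},A} = \mathsf x_{1\,1}$.

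The main (minor) obstacle is making sure Lemma~\ref{lem:DetNKXiEqX1} applies with an arbitrary first column, so that the $j=1$ term really is $v_{1\,1}$. It does, since that lemma holds for all $x_1,\ldots,x_n$. The remaining care is identifying the directional derivative with the $t$-coefficient of the formal polynomial, which is exactly the definition. If the case $n=k$ must be covered, I would use $A = \sum_{i=2}^k E_{i\,i}$ and Laplace expansion along the first column, which again yields the linear form $V\mapsto v_{1\,1}$.
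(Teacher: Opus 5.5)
Your proposal is correct and is essentially the paper's proof. It uses the same point $A$ (the matrix of Lemma~\ref{lem:DetNKXiEqX1} with zero first column), the same use of Corollary~\ref{cor:CullisBinomialExpansion} to discard the summands with $j\ge 2$ because the zero column $A(|1]$ survives in them, and Lemma~\ref{lem:DetNKXiEqX1} to evaluate the remaining summand as $v_{1\,1}$. Your separate treatment of $n=k$ via $A=\sum_{i=2}^{k}E_{i\,i}$ is a little more careful than the paper, which applies Lemma~\ref{lem:DetNKXiEqX1} for all $n\ge k$ although that lemma is stated only for $n\ge k+2$ (its matrix in fact also works for $n=k$); note, however, that both arguments tacitly need $k\ge 2$, since for $k=1$ and odd $n\ge 3$ the form $\det_{n\,1}=\sum_{i}(-1)^{i-1}\mathsf x_{i\,1}$ is linear, so $\mathcal L_{\det_{n\,1}}$ is spanned by it alone and does not contain $\mathsf x_{1\,1}$.
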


\begin{proof}The proof of this lemma is implicitly contained in the proof of Lemma~4.2 in~\cite{Guterman2025}. We provide an explicit proof for the clarity and convenience. 

Let us show that
\begin{equation}\label{lem:NKEven11InL:eq5}
l_{\det_{n\,k}, A} = \frac{\partial \det_{n\,k}}{\partial X}(A) = x_{1\,1}\;\;\mbox{for all}\;\; X = (x_{i\,j}) \in \M_{n\,k}(\F),
\end{equation}
where $A \in \M_{n\,k}(\F)$ is defined by
\[
A = E_{2\,2} + E_{3\,3} + \ldots + E_{(k-1)\,(k-1)} + E_{2\,k} + \ldots + E_{n\,k}  = \begin{pmatrix}
0 & 0 & 0 & \cdots & 0\\
0 & 1 & 0 &\cdots & 1\\
0 & 0 & 1 &\cdots & 1\\
\vdots & \vdots & \vdots & \ddots & \vdots\\
0 & 0 & 0 & \cdots & 1\\
\vdots & \vdots & \vdots & \ddots & \vdots\\
0 & 0 & 0 & \cdots & 1
\end{pmatrix}.
\]
This fact clearly implies the statement of the lemma.

Relying on Lemma~\ref{def:partDer}, we have the equality 
\begin{equation}\label{lem:NKEven11InL:eq2}
\frac{\partial \det_{n\,k}}{\partial X}(A) = g_{A, X}'(0),
\end{equation}
where $g_{A, X} = \det_{n\,k}(A + tX) \in \F[t]$. By Corollary~\ref{cor:DegDetABLEQK} we conclude that there exist $a_0, \ldots, a_k \in \F$ such that
 \[ g_{A, X} = \det_{n\, k} (A + t Y) = a_0 + a_1t + \ldots + a_k t^k.
  \]
The Definition~\ref{def:onedimder} implies that 
\begin{equation}\label{lem:NKEven11InL:eq1}
g_{A, X}'(0) = a_1.
\end{equation}
Using the expansion~\eqref{eq:CullisBinomialExpansion}, we obtain that 
\[
a_1 = \sum_{1 \le i_1 \le k} \det_{n\,k}\Bigl(A(|1]\Big|\ldots \Big| X(|i_1] \Big| \ldots \Big| A(|k] \Bigr).\]
Since $A(|1]$ is a zero column, we conclude by splitting off the first term that
\begin{multline}\label{lem:NKEven11InL:eq3}
a_1 = \sum_{1 \le i_1 \le k} \det_{n\,k}\Bigl(A(|1]\Big|\ldots \Big| X(|i_1] \Big| \ldots \Big| A(|k] \Bigr)\\
 = \det_{n\,k} \Bigl(X(|1] \Big| A(|2] \Big| \ldots \Big| A(|k]\Bigr)  + \sum_{2 \le i_1 \le k} \det_{n\,k}\Bigl(A(|1]\Big|\ldots \Big| X(|i_1] \Big| \ldots \Big| A(|k] \Bigr)\\
 = \det_{n\,k} \Bigl(X(|1] \Big| A(|2] \Big| \ldots \Big| A(|k]\Bigr)  + \sum_{2 \le i_1 \le k} \det_{n\,k}\Bigl(0\Big|\ldots \Big| X(|i_1] \Big| \ldots \Big| A(|k] \Bigr)\\
 =  \det_{n\,k} \Bigl(X(|1] \Big| A(|2] \Big| \ldots \Big| A(|k]\Bigr) + \sum_{2 \le i_1 \le k} 0
 = \det_{n\,k} \Bigl(X(|1] \Big| A(|2] \Big| \ldots \Big| A(|k]\Bigr).
\end{multline}

Now note that $X(|1] \Big| A(|2] \Big| \ldots \Big| A(|k]$ has the form described in the statement of Lemma~\ref{lem:DetNKXiEqX1} for $x_1 = x_{1\,1}, \ldots, x_n = x_{n\,1}$. This implies that\begin{equation}\label{lem:NKEven11InL:eq4}
\det_{n\,k} \Bigl(X(|1] \Big| A(|2] \Big| \ldots \Big| A(|k]\Bigr) = x_{1\,1}.
\end{equation}

By aligning the equalities~\eqref{lem:NKEven11InL:eq2}, \eqref{lem:NKEven11InL:eq1}, \eqref{lem:NKEven11InL:eq3} and~\eqref{lem:NKEven11InL:eq4}  together, we obtain the equality~\eqref{lem:NKEven11InL:eq5}, which concludes the proof.
\end{proof}

\begin{lemma}\label{lem:NKEvenEveryIJInL}Assume that  $n \ge k,$ $|\F| > k$ and  $n + k$ is even.  Then $\mathsf x_{i\,j} \in \mathcal L_{\det_{n\,k}}$ for all $1 \le i \le n$, $1 \le j \le k$.
\end{lemma}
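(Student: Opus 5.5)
The plan is to move the already-known element $\mathsf x_{1\,1}\in\mathcal L_{\det_{n\,k}}$ (Lemma~\ref{lem:NKEven11InL}) to every other coordinate function using the invertible linear preservers $\SCS^{\mathrm{even}}_{i\,j}$ of Lemma~\ref{lem:ReduceTo11ByShift}, together with Lemma~\ref{thm:surjpres}.

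\textbf{Step 1: apply Lemma~\ref{thm:surjpres}.} Fix $1\le i\le n$, $1\le j\le k$ and put $S=\SCS^{\mathrm{even}}_{i\,j}$. Since $S$ is linear and preserves $\det_{n\,k}$, the pair $\phi=\psi=S$ satisfies
\[
\det_{n\,k}(X+\lambda Y)=\det_{n\,k}(S(X+\lambda Y))=\det_{n\,k}(S(X)+\lambda S(Y)).
\]
Moreover $S$ is invertible, hence surjective. The hypothesis $\deg(\det_{n\,k})=k<|\F|$ holds by assumption. Lemma~\ref{thm:surjpres} therefore gives $l\circ S\in\mathcal L_{\det_{n\,k}}$ for every $l\in\mathcal L_{\det_{n\,k}}$. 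Applying this to $l=\mathsf x_{1\,1}$ yields $\mathsf x_{1\,1}\circ \SCS^{\mathrm{even}}_{i\,j}\in\mathcal L_{\det_{n\,k}}$.

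\textbf{Step 2: compute the composite.} By Definition~\ref{def:SCSDefEven}, the $(1,1)$ entry of $\SCS^{\mathrm{even}}_{i\,j}(X)$ is $(-1)^{n-i}(-1)^{1-\delta_{1\,j}}x_{i\,j}$. The reasoning is as follows:
\begin{enumerate}[label=(\alph*)]
\item the cyclic row shift places row $i$ first, and this row is not negated;
\item the column swap brings column $j$ into the first position;
\item only the signs $(-1)^{1-\delta_{1\,j}}$ and $(-1)^{n-i}$ remain.
\end{enumerate}
Hence $\mathsf x_{1\,1}\circ\SCS^{\mathrm{even}}_{i\,j}=\pm\mathsf x_{i\,j}$. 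Since $\mathcal L_{\det_{n\,k}}$ is a vector space, $\mathsf x_{i\,j}\in\mathcal L_{\det_{n\,k}}$.

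\textbf{Expected obstacle.} There is no real difficulty. The only point needing care is reading the $(1,1)$ entry correctly off the displayed matrix in Definition~\ref{def:SCSDefEven}, and checking that the index ranges allowed in Lemma~\ref{lem:ReduceTo11ByShift} (stated with $1\le j\le n$, a typo for $k$) cover all $1\le j\le k$. The sign itself is irrelevant.
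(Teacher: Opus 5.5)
Your proposal is correct and is essentially the paper's proof. Like the paper, you transport $\mathsf x_{1\,1}\in\mathcal L_{\det_{n\,k}}$ (Lemma~\ref{lem:NKEven11InL}) through Lemma~\ref{thm:surjpres} with $\phi=\psi=\SCS^{\mathrm{even}}_{i\,j}$, read off the $(1,1)$ entry, and note that your sign $(-1)^{n-i}(-1)^{1-\delta_{1\,j}}$ is in fact the accurate one (the paper's displayed computation drops the factor $(-1)^{n-i}$), though the sign is immaterial.
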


\begin{proof}Lemma~\ref{lem:NKEven11InL} implies that $\mathsf x_{1\,1} \in \mathcal L_{\det_{n\,k}}$. Since $\SCS^{\mathrm{even}}_{i\,j}$ is a linear map preserving $\det_{n\,k}$, then
\begin{multline*}
\det_{n\,k}(X + \lambda Y) = \det_{n\,k}(\SCS^{\mathrm{even}}_{i\,j}(X + \lambda Y))\\
 = \det_{n\,k}(\SCS^{\mathrm{even}}_{i\,j}(X) + \lambda \SCS^{\mathrm{even}}_{i\,j}(Y)))\;\;\mbox{for all}\;\;X, Y \in \M_{n\,k}(\F), \lambda \in \F.
\end{multline*}
Therefore, 
\begin{equation}\label{lem:NKEvenEveryIJInL:eq1}
l =  \mathsf x_{1\,1} \circ \SCS^{\mathrm{even}}_{i\,j} \in \mathcal L_P
\end{equation}
  by Lemma~\ref{thm:surjpres}. Note that
\begin{equation*}
(\mathsf x_{1\,1} \circ \SCS^{\mathrm{even}}_{i\,j})(X) = (\SCS^{\mathrm{even}}_{i\,j})(X)_{1\,1} = (-1)^{1-\delta_{1\, j}}x_{i\,j}\;\;\mbox{for all}\;\;X \in \M_{n\,k}(\F)
\end{equation*}
by the definition of $\SCS^{\mathrm{even}}_{i\,j}$. Therefore,
\begin{equation}\label{lem:NKEvenEveryIJInL:eq2}
l = (-1)^{1-\delta_{1\, j}}\mathsf x_{i\,j}.
\end{equation}
Since $\L_{\det_{n\,k}}$ is a vector space, then~\eqref{lem:NKEvenEveryIJInL:eq1} and~\eqref{lem:NKEvenEveryIJInL:eq2} together imply that $\mathsf x_{i\,j} \in \L_{\det_{n\,k}}$. 
\end{proof}

\begin{corollary}\label{cor:DimNKEvenEqNK}Assume that  $n \ge k$, $|\F| > k$  and  $n + k$ is even. Then
\begin{enumerate}[label=(\alph*), ref=(\alph*)]
\item\label{cor:DimNKEvenEqNK:part1} $\L_{\det_{n\,k}} = \M_{n\,k}^*(\F)$;
\item\label{cor:DimNKEvenEqNK:part2} $\dim(\L_{\det_{n\,k}}) = nk$.
\end{enumerate}
\end{corollary}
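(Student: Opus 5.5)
Both parts follow almost directly from Lemma~\ref{lem:NKEvenEveryIJInL}, so the plan is short.

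For part~\ref{cor:DimNKEvenEqNK:part1}, I first note that $\L_{\det_{n\,k}} \subseteq \M_{n\,k}^*(\F)$ by Definition~\ref{def:LP}, since each $l_{\det_{n\,k}, A}$ is a linear function on $\M_{n\,k}(\F)$ by Lemma~\ref{lem:DirDerLin}. For the reverse inclusion, the coordinate functionals $\mathsf x_{i\,j}$, $1 \le i \le n$, $1 \le j \le k$, form the basis of $\M_{n\,k}^*(\F)$ dual to the standard basis $\{E_{ij}\}$ of $\M_{n\,k}(\F)$. Under the hypotheses $n \ge k$, $|\F| > k$ and $n+k$ even, Lemma~\ref{lem:NKEvenEveryIJInL} gives $\mathsf x_{i\,j} \in \L_{\det_{n\,k}}$ for every such pair. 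Since $\L_{\det_{n\,k}}$ is a vector space, it contains the span of this basis, which is all of $\M_{n\,k}^*(\F)$. This proves equality.

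For part~\ref{cor:DimNKEvenEqNK:part2}, I read the dimension off part~\ref{cor:DimNKEvenEqNK:part1}: $\dim \M_{n\,k}^*(\F) = \dim \M_{n\,k}(\F) = nk$.

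There is no real obstacle, since the substantive work (producing $\mathsf x_{1\,1}$ via Lemma~\ref{lem:NKEven11InL} and transporting it to every $\mathsf x_{i\,j}$ via the maps $\SCS^{\mathrm{even}}_{i\,j}$ and Lemma~\ref{thm:surjpres}) was already done in Lemma~\ref{lem:NKEvenEveryIJInL}. The only thing to verify is that the corollary's hypotheses match those of Lemma~\ref{lem:NKEvenEveryIJInL} exactly, and they do.
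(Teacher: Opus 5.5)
Your proposal is correct and is exactly the paper's argument (the paper gives no separate proof, treating the corollary as immediate): Lemma~\ref{lem:NKEvenEveryIJInL} places every coordinate functional $\mathsf x_{i\,j}$ in the subspace $\L_{\det_{n\,k}} \subseteq \M_{n\,k}^*(\F)$, so this subspace is all of $\M_{n\,k}^*(\F)$ and has dimension $nk$. One caveat, which you inherit from that lemma rather than introduce yourself: the statement fails for $k = 1$ and odd $n \ge 3$, since there $\det_{n\,1}(X) = x_{1\,1} - x_{2\,1} + \cdots + x_{n\,1}$ is a single linear form and $\L_{\det_{n\,1}}$ is one-dimensional, which does not affect the paper's main application, where $k \ge 3$.
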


Here we provide another proof of~\cite[Lemma~4.6]{Guterman2025} which employs the theory developed above.

\begin{lemma}\label{lem:NKEvenAdditionPreserverIsZero}Assume that $n \ge k$, $|\F| > k$ and $n + k$ is even. Then\linebreak $\rad(\det_{n\,k}) = \{0\}$.
\end{lemma}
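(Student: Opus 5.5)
The plan is to obtain $\rad(\det_{n\,k})=\{0\}$ directly from the fact that $\L_{\det_{n\,k}}$ is the whole dual space, using Lemma~\ref{lem:RadAnnLP} together with Corollary~\ref{cor:DimNKEvenEqNK}.

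First we check the hypothesis of Lemma~\ref{lem:RadAnnLP}. The polynomial $\det_{n\,k}$ is homogeneous of degree $k$ in the entries of the matrix, since every maximal minor has degree $k$. The assumption $|\F| > k$ therefore gives $\deg(\det_{n\,k}) < |\F|$. Lemma~\ref{lem:RadAnnLP} then says that every $V \in \rad(\det_{n\,k})$ satisfies $l(V) = 0$ for all $l \in \L_{\det_{n\,k}}$.

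Second, by Corollary~\ref{cor:DimNKEvenEqNK}\ref{cor:DimNKEvenEqNK:part1} we have $\L_{\det_{n\,k}} = \M_{n\,k}^*(\F)$. In particular each coordinate functional $\mathsf x_{i\,j}$ lies in $\L_{\det_{n\,k}}$, which is Lemma~\ref{lem:NKEvenEveryIJInL}. Hence every $V \in \rad(\det_{n\,k})$ has $v_{i\,j} = \mathsf x_{i\,j}(V) = 0$ for all $1\le i\le n$, $1\le j\le k$, so $V = 0$.

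An equivalent route is by dimension count. Lemma~\ref{lem:dzeroeqk}\ref{lem:dzeroeqk:part1} gives $\dim(\rad(\det_{n\,k})) \le nk - \dim(\L_{\det_{n\,k}})$, and Corollary~\ref{cor:DimNKEvenEqNK}\ref{cor:DimNKEvenEqNK:part2} gives $\dim(\L_{\det_{n\,k}}) = nk$. So $\dim(\rad(\det_{n\,k})) = 0$. This is presumably the use of Lemma~\ref{lem:dzeroeqk} that Remark~\ref{rem:LPUseful} refers to.

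Neither route has a real obstacle. The only points to confirm are the degree bound, which is needed for Lemma~\ref{lem:RadAnnLP} and Lemma~\ref{lem:dzeroeqk}, and that Corollary~\ref{cor:DimNKEvenEqNK} applies under the present hypotheses $n\ge k$, $|\F|>k$, $n+k$ even. Both are immediate. We would write the dimension-count version as the main proof, since it matches the remark.
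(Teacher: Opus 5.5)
Your proposal is correct, and the dimension-count version you chose as the main proof is exactly the paper's argument: Lemma~\ref{lem:dzeroeqk}\ref{lem:dzeroeqk:part1} combined with $\dim(\L_{\det_{n\,k}}) = nk$ from Corollary~\ref{cor:DimNKEvenEqNK}\ref{cor:DimNKEvenEqNK:part2}. Your direct route via Lemma~\ref{lem:RadAnnLP} and the coordinate functionals is equally valid. It is the same ingredient unpacked, since part (a) of Lemma~\ref{lem:dzeroeqk} is itself derived from Lemma~\ref{lem:RadAnnLP}.
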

\begin{proof}Indeed, $\dim(\L_{\det_{n\,k}}) + \dim(\rad(\det_{n\,k})) \le nk$ by Lemma~\ref{lem:dzeroeqk}\ref{lem:dzeroeqk:part1}. Since $\dim(\L_{\det_{n\,k}}) = nk$ by Corollary~\ref{cor:DimNKEvenEqNK}\ref{cor:DimNKEvenEqNK:part2}, $\dim(\rad(\det_{n\,k})) = 0$ and consequently $\rad(\det_{n\,k}) = \{0\}$.
\end{proof}

\begin{corollary}\label{cor:NKEvenSatDimCond}Assume that  $n \ge k$   and  $n + k$ is even. Then $\dim(\L_{\det_{n\,k}}) + \dim(\rad(\det_{n\,k})) = nk$.
\end{corollary}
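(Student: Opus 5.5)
The statement is an immediate consequence of the two facts just established, Corollary~\ref{cor:DimNKEvenEqNK}\ref{cor:DimNKEvenEqNK:part2} and Lemma~\ref{lem:NKEvenAdditionPreserverIsZero}. Adding them gives $\dim(\L_{\det_{n\,k}}) + \dim(\rad(\det_{n\,k})) = nk + 0 = nk$. The ambient space here is $\M_{n\,k}(\F)\cong\F^{nk}$, so the number $n$ appearing in Lemma~\ref{lem:dzeroeqk} is played by $nk$.

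The only point needing care is the hypothesis $|\F| > k$. Both Corollary~\ref{cor:DimNKEvenEqNK} and Lemma~\ref{lem:NKEvenAdditionPreserverIsZero} assume it, but it is not written in the present statement. I would read it as implicit, since it is the standing assumption of this subsection and equals $|\F| > \deg(\det_{n\,k})$. Under that reading the proof is simply the two-line addition above.

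If instead the statement is meant for arbitrary fields, I would fall back on Corollary~\ref{cor:charzerokp} when $\Char(\F)=0$; there the field is infinite and the identity holds automatically. A small finite field $|\F|\le k$ would need a separate argument, since Lemma~\ref{lem:RadAnnLP} and the polynomial-identity Corollary~\ref{DegFLessZero} then break down. I would not pursue that case. I expect the intended statement to carry $|\F|>k$, and the main obstacle is only this bookkeeping of hypotheses, not any mathematics.
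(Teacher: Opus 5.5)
Your proof is correct and is the paper's own argument: the corollary is stated without proof right after Lemma~\ref{lem:NKEvenAdditionPreserverIsZero}, as the sum $nk + 0$ of Corollary~\ref{cor:DimNKEvenEqNK}\ref{cor:DimNKEvenEqNK:part2} and that lemma. You are also right that both results assume $|\F| > k$, so that hypothesis is silently inherited here (it is also assumed in Theorem~\ref{thm:homogenphipsiDetNKEven}, where the corollary is used), and reading it as implicit is the correct interpretation.
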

%

\begin{theorem}\label{thm:homogenphipsiDetNKEven}Assume that  $n \ge k + 2,$ $|\F| > k$, $k \ge 3$ and  $n + k$ is even.  If  $\phi, \psi \colon \F^n \to \F^n$ are two maps satisfying the condition~(\ref{cond:DETNK}), then there exist  $A \in \M_{n\, n}(\F)$ and $B \in \M_{k\, k}(\F)$ such that
\begin{equation}\label{thm:homogenphipsiDetNKEven:eq}
\det_{n\, k} \Bigl(A(|i_1,\ldots, i_k]\Bigr) \cdot \det_k \Bigl(B\Bigr) = (-1)^{i_1 + \ldots + i_k - 1 - \ldots - k}
\end{equation}
for all increasing sequences $1 \le i_1 < \ldots < i_k \le n $ and
\begin{equation}\label{thm:homogenphipsiDetNKEven:eqq}
\phi(X) = \psi(X) = AXB\;\;\mbox{for all}\;\;X \in \M_{n\, k} (\F).
\end{equation}
\end{theorem}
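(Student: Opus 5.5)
The plan is to combine Theorem~\ref{thm:homogenphipsi} with the linear characterization in Lemma~\ref{thm:THMNPLUSKEVEN}. First I identify $\M_{n\,k}(\F)$ with $\F^{nk}$ via the coordinates $\mathsf x_{i\,j}$. Then $\det_{n\,k}$ becomes a homogeneous polynomial of degree $k$ in $nk$ variables. Note that the statement writes $\phi,\psi\colon\F^n\to\F^n$, but the intended domain is clearly $\M_{n\,k}(\F)$, and I read it that way. The hypothesis $|\F|>k=\deg(\det_{n\,k})$ is given. The dimension condition $\dim(\L_{\det_{n\,k}})+\dim(\rad(\det_{n\,k}))=nk$ is Corollary~\ref{cor:NKEvenSatDimCond}, which rests on Corollary~\ref{cor:DimNKEvenEqNK} and Lemma~\ref{lem:NKEvenAdditionPreserverIsZero}; those results use $|\F|>k$, which we have. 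So every hypothesis of Theorem~\ref{thm:homogenphipsi} holds for $P=\det_{n\,k}$ and the given pair $(\phi,\psi)$ satisfying~\eqref{cond:DETNK}.

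Theorem~\ref{thm:homogenphipsi} then gives a unique linear map $T_{\rad}$ on $\M_{n\,k}(\F)/\rad(\det_{n\,k})$ preserving $(\det_{n\,k})_{\rad}$ with $\pi_{\rad}\circ\phi=\pi_{\rad}\circ\psi=T_{\rad}\circ\pi_{\rad}$. By Lemma~\ref{lem:NKEvenAdditionPreserverIsZero}, $\rad(\det_{n\,k})=\{0\}$. Hence $\pi_{\rad}$ is the identity, $(\det_{n\,k})_{\rad}=\det_{n\,k}$, and the conclusion reads $\phi=\psi=T$ for a linear map $T$ with $\det_{n\,k}(T(X))=\det_{n\,k}(X)$ for all $X$.

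Finally, since $k\ge 3$, $n\ge k+2$, $|\F|>k$ and $n+k$ is even, Lemma~\ref{thm:THMNPLUSKEVEN} applies to $T$. It yields $A\in\M_{n\,n}(\F)$ and $B\in\M_{k\,k}(\F)$ satisfying~\eqref{thm:homogenphipsiDetNKEven:eq} with $T(X)=AXB$, which is exactly~\eqref{thm:homogenphipsiDetNKEven:eqq}.

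The main obstacle is already absorbed by earlier results, namely the computation $\L_{\det_{n\,k}}=\M^*_{n\,k}(\F)$, which forces a trivial radical and the dimension condition. What remains is bookkeeping: checking that the quotient objects collapse correctly when the radical is zero, and that the cited linear preserver theorem's hypotheses match those of the statement.
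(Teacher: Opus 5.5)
Your proposal is correct and follows the paper's proof essentially step for step. You check the hypotheses of Theorem~\ref{thm:homogenphipsi} via homogeneity of degree $k$ and Corollary~\ref{cor:NKEvenSatDimCond}, collapse the quotient using $\rad(\det_{n\,k})=\{0\}$ from Lemma~\ref{lem:NKEvenAdditionPreserverIsZero}, and conclude with Lemma~\ref{thm:THMNPLUSKEVEN}; your explicit observation that $\phi=\psi=T$ is \emph{linear}, not merely $\det_{n\,k}$-preserving, is what justifies invoking the linear characterization, a point the paper leaves implicit.
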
 
\begin{proof}The definition of $\det_{n\,k}$ implies that $\det_{n\,k}$ is a homogeneous polynomial of degree $k$. By Corollary~\ref{cor:NKEvenSatDimCond} we conclude that $\det_{n\,k}$, $\phi$ and $\psi$ satisfy the conditions of Theorem~\ref{thm:homogenphipsi}. Therefore, $\phi$ and $\psi$ preserve $\det_{n\,k}$ and $\phi = \psi$ because $\rad(\det_{n\,k}) = \{0\}$ by Lemma~\ref{lem:NKEvenAdditionPreserverIsZero}.
Thus, by Lemma~\ref{lem:THMNPLUSKEVENKALL}, there exist  $A \in \M_{n\, n}(\F)$ and $B \in \M_{k\, k}(\F)$ satisfying the condition\eqref{thm:homogenphipsiDetNKEven:eq} such that the equality\eqref{thm:homogenphipsiDetNKEven:eqq} holds.
\end{proof}

\subsection{$n + k$ is odd}

\begin{definition}[{\cite[Definition~3.6]{Guterman2025b}}]\label{def:WNK}By $W_{n\,k}\subseteq \M_{n\,k}(\F)$ we denote a $k$-dimensional vector space consisting of matrices, all rows of which are equal. That is,
\[
W_{n\,k} = \{\begin{pmatrix}y_1 & \cdots & y_k\\ \vdots & \ddots & \vdots\\ y_1 & \cdots & y_k\end{pmatrix} \mid y_1, \ldots, y_k \in \F\}.
 \]
\end{definition}

\begin{lemma}[{Cf.~\cite[Theorem~3.3]{amiri2010}}]\label{lem:AdditionNPLusKOdd}
Assume that $1 \le k \le n,$ and $k + n$ is an odd integer, $A, X \in \M_{n\, k}(\F)$ and $X = \begin{psmallmatrix}
x_1 & \ldots & x_k\\
\vdots & \ddots & \vdots\\
x_1 & \ldots & x_k
\end{psmallmatrix}$ for some $x_1,\ldots x_k \in \F$. Then
\[
\det_{n\,k} (A + X) = \det_{n\,k} (A).
\]
\end{lemma}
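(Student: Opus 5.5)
The plan is to use multilinearity of $\det_{n\,k}$ in the columns to reduce the claim to the case where a single column of $A$ is replaced by the constant column $\mathbf 1=(1,\ldots,1)^t$. Write $X=\sum_{j=1}^k x_j\,\mathbf 1\mathbf e_j^t$, where $\mathbf e_j^t$ is the $j$-th standard row vector of length $k$. By Corollary~\ref{cor:CullisBinomialExpansion} (with $t=1$, or directly by multilinearity), $\det_{n\,k}(A+X)$ is the sum over subsets $J\subseteq\{1,\ldots,k\}$ of the Cullis determinants of the matrices obtained from $A$ by replacing the columns indexed by $J$ with $x_j\mathbf 1$. If $|J|\ge 2$, two of the replaced columns are proportional, so the term vanishes by property~3. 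It therefore suffices to show that for each $j$,
\[
\det_{n\,k}\bigl(A(|1]\,\big|\,\ldots\,\big|\,\mathbf 1\,\big|\,\ldots\,\big|\,A(|k]\bigr)=0,
\]
with $\mathbf 1$ in position $j$. Using the column interchange property, we may assume $j=1$.

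Next I would Laplace-expand along the first column, which is allowed by property~6 of the cited theorem. Concretely, from Definition~\ref{def:DETNK}, expanding each maximal minor $X[i_1,\ldots,i_k|)$ along its first column and regrouping by the deleted row $r$ gives
\[
\det_{n\,k}(\mathbf 1|B)=\sum_{r=1}^n (-1)^{r+1}\det_{(n-1)\,(k-1)}\bigl(B(r|)\bigr),
\]
where $B$ is the $n\times(k-1)$ block of remaining columns. The sign bookkeeping has to be checked, and it is routine.

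So the claim becomes the identity $\sum_r(-1)^{r+1}\det_{(n-1)(k-1)}(B(r|))=0$ whenever $n+k$ is odd. Expanding each $\det_{(n-1)(k-1)}(B(r|))$ into maximal minors, fix a $(k-1)$-subset $S=\{s_1<\cdots<s_{k-1}\}$ of rows. The minor $\det B[S|)$ appears once for every $r\notin S$, with sign $(-1)^{r+1}$ times the Cullis sign of $S$ viewed inside $\{1,\ldots,n\}\setminus\{r\}$. After deleting $r$, the rows of $S$ above $r$ keep their index and those below shift down by one. Hence the total sign is $(-1)^{r+1+\sigma(S)-\#\{s\in S: s>r\}}$, where $\sigma(S)$ is the Cullis exponent of $S$. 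The coefficient of $\det B[S|)$ is thus $\pm\sum_{r\notin S}(-1)^{r+\#\{s>r\}}$.

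The main obstacle is showing that this alternating sum vanishes exactly when $n+k$ is odd. I would argue as follows. As $r$ runs through the gaps of $S$, the exponent $r+\#\{s\in S:s>r\}$ changes by $+1$ between consecutive non-elements of $S$ within a gap, and is preserved mod 2 across an element of $S$, since $r$ jumps by 2 while the count drops by 1. So the signs alternate along the $n-k+1$ elements of the complement of $S$. The sum is therefore zero precisely when $n-k+1$ is even, i.e.\ when $n+k$ is odd. This gives the vanishing and hence the lemma. For $k=1$ the statement is simply $\sum_r(-1)^{r+1}=0$ for $n$ even, which agrees with this argument.
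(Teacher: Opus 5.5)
The paper gives no proof of this lemma. It quotes it from \cite{amiri2010} and uses it only to get Corollary~\ref{cor:WNKIsInRadDetNK}. Your argument is therefore an independent, self-contained route, and it is correct in substance.

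It needs only the column properties of $\det_{n\,k}$ listed in the paper (multilinearity, vanishing on proportional columns, sign change under column swaps) plus an explicit sign count. The steps check out as follows:
\begin{itemize}
\item The reduction to $\det_{n\,k}(\mathbf 1|B)=0$ is right.
\item Your Laplace formula $\det_{n\,k}(\mathbf 1|B)=\sum_{r}(-1)^{r+1}\det_{(n-1)\,(k-1)}(B(r|))$ is correct. The sign obtained by expanding a maximal minor along its first column equals $(-1)^{r+1}$ times the Cullis sign of $S$ inside $\{1,\ldots,n\}\setminus\{r\}$.
\item The resulting coefficient $\pm\sum_{r\notin S}(-1)^{r+\#\{s\in S:\,s>r\}}$ of $\det B[S|)$ is correct.
\end{itemize}
Since you re-expand into minors of $B$ anyway, you could skip the intermediate Laplace formula and expand each $k\times k$ minor of $(\mathbf 1|B)$ directly. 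Your route also gives something the citation does not. When $n+k$ is even, the same count gives coefficient $\pm1$. Taking $B$ with a single nonzero maximal minor then shows the parity hypothesis cannot be dropped, consistent with $\rad(\det_{n\,k})=\{0\}$ in the even case.

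One sentence in your final step is wrong as written, although the conclusion is right.
\begin{itemize}
\item Across an element of $S$, the exponent $f(r)=r+\#\{s\in S:\,s>r\}$ is \emph{not} preserved mod $2$. By your own count it changes by $2-1=1$, and that parity flip is exactly what makes the signs alternate.
\item Two consecutive elements of the complement may be separated by several elements of $S$, so ``$r$ jumps by $2$'' is not the general case.
\end{itemize}
State it this way instead. If $r<r'$ are consecutive elements of $\{1,\ldots,n\}\setminus S$, every integer strictly between them lies in $S$. Hence $f(r')-f(r)=(r'-r)-(r'-r-1)=1$. So $(-1)^{f(r)}$ alternates along the $n-k+1$ elements of the complement, and the sum is $0$ exactly when $n-k+1$ is even, i.e.\ when $n+k$ is odd. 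Because this is an identity of integers, it holds over every field, with no cardinality assumption.
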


\begin{corollary}\label{cor:WNKIsInRadDetNK}Assume that $1 \le k \le n$ and $k + n$ is an odd integer. Then $W_{n\,k} \subseteq \rad_{n\,k}(\det_{n\,k}).$
\end{corollary}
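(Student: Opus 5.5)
This is almost immediate from Lemma~\ref{lem:AdditionNPLusKOdd} together with Definition~\ref{def:radical}. We must show that every $W \in W_{n\,k}$ lies in $\rad(\det_{n\,k})$. By the definition of the radical, this means
\[
\det_{n\,k}(A + \lambda W) = \det_{n\,k}(A)\;\;\mbox{for all}\;\;A \in \M_{n\,k}(\F),\ \lambda \in \F.
\]

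Fix $A \in \M_{n\,k}(\F)$, $\lambda \in \F$, and $W \in W_{n\,k}$ with all rows equal to $(y_1, \ldots, y_k)$. First we observe that $W_{n\,k}$ is a linear subspace, so $\lambda W \in W_{n\,k}$: all of its rows equal $(\lambda y_1, \ldots, \lambda y_k)$. Hence $X = \lambda W$ has exactly the form required in Lemma~\ref{lem:AdditionNPLusKOdd}, with $x_j = \lambda y_j$.

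The hypotheses $1 \le k \le n$ and $n + k$ odd are the same in both statements, so Lemma~\ref{lem:AdditionNPLusKOdd} applies and gives
\[
\det_{n\,k}(A + \lambda W) = \det_{n\,k}(A).
\]
Since $A$ and $\lambda$ were arbitrary, $W \in \rad(\det_{n\,k})$, which proves the inclusion.

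There is no real obstacle. The only point requiring care is that the radical quantifies over all scalar multiples $\lambda W$, not just over $W$ itself. This is handled by the closure of $W_{n\,k}$ under scalar multiplication, which also covers $\lambda = 0$ trivially. No restriction on $|\F|$ is needed here.
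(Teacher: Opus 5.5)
Your proof is correct and follows the paper's intended argument. The paper states this corollary without proof as an immediate consequence of Lemma~\ref{lem:AdditionNPLusKOdd} and the definition of the radical, and your observation that $\lambda W \in W_{n\,k}$ for every $\lambda \in \F$ is the only step that needs spelling out.
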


\begin{definition}[{Cf.~\cite[Definition~3.4]{Guterman2025b}}]\label{def:SCSDefOdd}Let $n \ge k$ and $1 \le i \le n$, $1 \le j \le k$. By $\SCS^{\mathrm{odd}}_{i\,j}$ we denote a linear map on $\M_{n\,k}(\F)$ defined by 
\begin{equation*}
\SCS^{\mathrm{odd}}_{i\,j}\begin{psmallmatrix}
x_{1\,1} & \cdots  & x_{1\,k}\\
  \vdots & \ddots & \vdots\\
x_{n\,1} & \cdots & x_{n\,k}\\
\end{psmallmatrix}= (-1)^{i+1}\begin{psmallmatrix}
(-1)^{1 - \delta_{1\,j}}x_{i\, j} & \ldots & x_{i\,1} & \ldots & x_{i\, k}\\
\vdots & \ddots & \vdots & \ddots  & \vdots\\
(-1)^{1 - \delta_{1\,j}}x_{n\, j} & \ldots & x_{n\,1} & \ldots & x_{n\, k}\\
(-1)^{1 - \delta_{1\,j}}x_{1\, j} & \ldots &  x_{1\,1} & \ldots & x_{1\, k}\\
\vdots & \ddots & \vdots & \ddots & \vdots\\
(-1)^{1 - \delta_{1\,j}}x_{i-1\, j} & \ldots & x_{i-1\,1} & \ldots &  x_{i-1\, k}
\end{psmallmatrix}.
\end{equation*}
That is, $\SCS^{\mathrm{odd}}_{i\,j}(X)$ is obtained from $X$ by performing the following sequence of operations:
\begin{enumerate}
\item the row cyclical shift sending $i$-th row of $X$ to the first row of the result;
\item exchanging the first and the $j$-th column;
\item multiplying the first column  by $(-1)^{1 - \delta_{1\,j}}$;
\item multiplying all the entries by $(-1)^{i+1}$.
\end{enumerate}
\end{definition}

\begin{lemma}[{Cf.~\cite[Lemma~3.5]{Guterman2025b}}]\label{lem:ReduceTo11ByShiftOdd}Assume that $n \ge k$ and $n + k$ is odd. Then $\SCS^{\mathrm{odd}}_{i\,j}$ is an invertible linear map preserving $\det_{n\,k}$ for all $1 \le i \le n$, $1 \le j \le k$.
\end{lemma}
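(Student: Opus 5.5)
Linearity and invertibility of $\SCS^{\mathrm{odd}}_{i\,j}$ are immediate, since it is a composition of a row permutation, a column transposition and sign changes of rows, columns and the whole matrix, each of which is an invertible linear map on $\M_{n\,k}(\F)$. The real content is that $\det_{n\,k}$ is preserved. I will factor $\SCS^{\mathrm{odd}}_{i\,j}$ into a column part and a row part and treat each one separately.

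For the column part, I use the properties of the Cullis' determinant listed above (Theorem from~\cite{NAKAGAMI2007422}). Exchanging the first and $j$-th columns with $j\neq 1$ changes the sign of $\det_{n\,k}$. Multiplying the first column by $(-1)^{1-\delta_{1\,j}} = -1$ changes it back, by linearity in columns. For $j=1$ both operations are trivial. So the column part preserves $\det_{n\,k}$.

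For the row part, I need the cyclic row shift $R_i$ that sends row $i$ to the top, followed by the scalar $(-1)^{i+1}$ on all entries. Since $\det_{n\,k}$ is homogeneous of degree $k$, this scalar multiplies the value by $(-1)^{(i+1)k}$. It therefore suffices to show $\det_{n\,k}(R_i X) = (-1)^{(i+1)k}\det_{n\,k}(X)$. Because $R_i$ is the $(i-1)$-fold power of the elementary shift $R$ (last row moved to the top, i.e. row $r\mapsto r+1$ for $r<n$, row $n\mapsto 1$), it is enough to prove
\[
\det_{n\,k}(RX) = (-1)^k \det_{n\,k}(X).
\]
The exponent then accumulates to $(-1)^{(i-1)k} = (-1)^{(i+1)k}$.

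To prove the one-step identity I compare the expansions in Definition~\ref{def:DETNK} term by term, and this sign bookkeeping is the main obstacle. Take a row set $I=\{i_1<\dots<i_k\}$ of $RX$.

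If $1\notin I$, the corresponding minor of $RX$ is the minor of $X$ on rows $I-1$, with the same row order. The sign exponent drops by $k$, so the term picks up a factor $(-1)^k$, as desired.

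If $1\in I$, row $1$ of $RX$ is row $n$ of $X$. Moving it from the top to the bottom of the minor is a cyclic permutation of $k$ rows, so the minor equals $(-1)^{k-1}$ times the minor of $X$ on rows $\{i_2-1,\dots,i_k-1,n\}$. The sign exponents differ as follows:
\[
(1+i_2+\dots+i_k) - \bigl((i_2-1)+\dots+(i_k-1)+n\bigr) = 1+(k-1)-n = k-n .
\]
The total factor is therefore $(-1)^{k-1}(-1)^{k-n} = (-1)^{n+1} = (-1)^{k}$, because $n+k$ is odd. This parity step is exactly where the hypothesis on $n+k$ enters.

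The map $I\mapsto$ (corresponding row set of $X$) is a bijection on $k$-subsets, so summing over all $I$ gives $\det_{n\,k}(RX)=(-1)^k\det_{n\,k}(X)$. Combining the row and column parts shows that $\SCS^{\mathrm{odd}}_{i\,j}$ preserves $\det_{n\,k}$, completing the proof.
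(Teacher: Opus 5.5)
Your proof is correct. The paper itself gives no proof of this lemma; it simply imports it from \cite[Lemma~3.5]{Guterman2025b}. So your direct verification from Definition~\ref{def:DETNK} is a self-contained alternative rather than a reproduction.

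You split $\SCS^{\mathrm{odd}}_{i\,j}$ into a column part, handled by the alternating and multilinear properties of $\det_{n\,k}$ in columns, and a row part. You then reduce the row part to the one-step identity $\det_{n\,k}(RX)=(-1)^k\det_{n\,k}(X)$, and your term-by-term comparison of that identity through the bijection $I\mapsto I-1$ (indices mod $n$) on $k$-subsets is exactly right. The two classes of row sets pick up the factors $(-1)^k$ and $(-1)^{n+1}$, and these coincide precisely when $n+k$ is odd. This makes transparent where the parity hypothesis is used. It also explains why $\SCS^{\mathrm{even}}_{i\,j}$ must negate the wrapped-around rows when $n+k$ is even: there the second class would otherwise carry the wrong sign. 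The citation buys brevity; your computation buys self-containedness and this structural insight.

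One small slip: with your convention ($R$ moves the last row to the top), the shift $R_i$ that brings row $i$ to the top is $R^{-(i-1)}=R^{n-i+1}$, not $R^{i-1}$. This does not affect the conclusion. Applying your identity to $R^{-1}X$ gives $\det_{n\,k}(X)=(-1)^k\det_{n\,k}(R^{-1}X)$, that is, $\det_{n\,k}(R^{-1}X)=(-1)^k\det_{n\,k}(X)$. Hence $\det_{n\,k}(R_iX)=(-1)^{(i-1)k}\det_{n\,k}(X)$ still holds, and the rest of the argument goes through unchanged.
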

\begin{lemma}[{\cite[Lemma~3.2]{Guterman2025b}}]\label{lem:DetNKXiEqX1Odd}Suppose that $n \ge k \ge 1, n+k$ is odd,\linebreak $x_1, \ldots, x_n \in \F$ and
\[
X = \begin{pmatrix}
x_1 & 0 & 0 & \cdots & 0\\
x_2 & 0 & 0 & \cdots & 0\\
x_3 & 1 & 0 &\cdots & 1\\
x_4 & 0 & 1 &\cdots & 1\\
\vdots & \vdots & \vdots & \ddots & \vdots\\
x_k & 0 & 0 & \cdots & 1\\
\vdots & \vdots & \vdots & \ddots & \vdots\\
x_{n} & 0 & 0 & \cdots & 1
\end{pmatrix} \in \M_{n\,k}(\F).
\]
Then
\[
\det_{n\,k}(X) = (-1)^{k-1}(x_1 - x_2).
\]
\end{lemma}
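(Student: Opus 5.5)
The statement to prove is Lemma~\ref{lem:DetNKXiEqX1Odd}: for the displayed matrix $X$ with first column $(x_1,\ldots,x_n)^t$, we must show $\det_{n\,k}(X)=(-1)^{k-1}(x_1-x_2)$. The plan is to use the linearity of $\det_{n\,k}$ in the first column (Laplace expansion along that column). This reduces the claim to computing, for each $i$, the value $c_i=\det_{n\,k}(\mathbf e_i|A(|2]|\ldots|A(|k])$, where $A$ is $X$ with its first column set to zero. We then need to show
\[
c_1=(-1)^{k-1},\qquad c_2=-(-1)^{k-1},\qquad c_i=0 \ \text{for } i\ge 3.
\]

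For $i\ge 3$ the vanishing is immediate from column operations, which leave $\det_{n\,k}$ unchanged. The last column $A(|k]$ is the indicator of rows $3,\ldots,n$. Subtract the middle columns $A(|2],\ldots,A(|k-1]$, which are $\mathbf e_3,\ldots,\mathbf e_k$, from the last column; it becomes $\mathbf e_{k+1}+\ldots+\mathbf e_n$. Now the first column $\mathbf e_i$ either equals one of the middle columns or is supported in rows $k+1,\ldots,n$. In the latter case, note that there are $n-k\ge 1$ rows below row $k$. One could hope that $\mathbf e_i$ combined with the ones-vector on rows $k+1..n$ makes the matrix degenerate, but that is not automatic, so a direct computation is needed.

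For that computation I would apply Definition~\ref{def:DETNK} directly. With columns $\mathbf e_i,\mathbf e_3,\ldots,\mathbf e_k,\mathbf u$, where $\mathbf u=\sum_{r>k}\mathbf e_r$, a nonzero $k\times k$ minor must use rows $i,3,\ldots,k$ together with one row $r>k$ from $\mathbf u$. For $i\le 2$ this gives $n-k$ minors, each equal to $\pm1$. For $i>k$, the row $i$ is also a row of $\mathbf u$ and the minor would have two equal rows, so only $r\ne i$ contributes. The work is then to sum the signs $(-1)^{\sum\text{rows}-k(k+1)/2}$ multiplied by the permutation sign.

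The main obstacle is this sign bookkeeping. The sum over $r=k+1,\ldots,n$ of $(-1)^r$ is an alternating sum, and $n-k$ is odd, so it collapses to a single term; this is exactly where the parity of $n+k$ is used. For $i\le 2$ the permutation sign is fixed, while the row-sum sign differs by $(-1)^{1}$ between $i=1$ and $i=2$, giving $c_2=-c_1$. Since $n-k$ is odd, the alternating sum over $r$ yields $\pm1$, and tracking the sign gives $(-1)^{k-1}$. For $i>k$, the permutation sign changes depending on whether $r<i$ or $r>i$. This sign flip combined with the alternation pairs the terms off, and I expect the total to be $0$, which gives $c_i=0$. For $3\le i\le k$ the matrix has two equal columns, so $c_i=0$ directly.

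Combining the three cases gives $\det_{n\,k}(X)=c_1x_1+c_2x_2=(-1)^{k-1}(x_1-x_2)$. The approach mirrors the even case in Lemma~\ref{lem:DetNKXiEqX1}.
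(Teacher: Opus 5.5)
The paper does not prove this lemma itself; it is quoted from the earlier paper on Cullis' determinant. Your argument therefore has to stand on its own.

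Its architecture is right. The following steps are all correct:
\begin{itemize}
\item expanding linearly in the first column;
\item clearing the last column to $\mathbf u=\sum_{r>k}\mathbf e_r$ by subtracting $\mathbf e_3,\dots,\mathbf e_k$;
\item observing that every surviving minor has row set $\{i,3,\dots,k,r\}$ with $r>k$;
\item the values $c_1=\sum_{r=k+1}^{n}(-1)^{r}=(-1)^{k-1}$ (row sign $(-1)^{r-2}$, minor equal to $1$), $c_2=-c_1$, and $c_i=0$ for $3\le i\le k$.
\end{itemize}
Only the range $3\le i\le k$ is ``immediate'', so your opening claim for all $i\ge 3$ should be deleted. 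The step you have not actually proved is $c_i=0$ for $i>k$, which you only say you expect. Together with the value of $c_1$, this is exactly where the parity of $n+k$ is used, so it has to be written out.

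It does go through, as follows.
\begin{itemize}
\item First subtract the first column $\mathbf e_i$ from $\mathbf u$. The obstruction is not ``two equal rows''. It is that $\mathbf u$, restricted to the row set, still has a $1$ in row $i$, and this must be cleared before a permutation sign can be read off. The last column becomes $\sum_{r>k,\,r\neq i}\mathbf e_r$.
\item For the row set $\{3,\dots,k,i,r\}$ the Cullis sign is $(-1)^{i+r+1}$.
\item The minor is a permutation matrix of sign $(-1)^{k}$ if $r>i$, because the column $\mathbf e_i$ has to move past the $k-2$ columns $\mathbf e_3,\dots,\mathbf e_k$. It has sign $(-1)^{k-1}$ if $r<i$, because the permutation is a $k$-cycle.
\item Set $r'=r$ for $r<i$ and $r'=r-1$ for $r>i$. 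Each contribution becomes $(-1)^{i+k+r'}$, with $r'$ running bijectively over $k+1,\dots,n-1$.
\end{itemize}
Hence
\[
c_i=(-1)^{i+k}\sum_{r'=k+1}^{n-1}(-1)^{r'}=0,
\]
since the sum has $n-k-1$ terms, an even number. With this the proof is complete for $k\ge 2$. For $k=2$ there are no middle columns, and the same computation applies verbatim.

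Note that $k\ge 2$ is genuinely needed, although the statement allows $k=1$. For $k=1$ we have $X=(x_1,\dots,x_n)^t$ and $\det_{n\,1}(X)=\sum_{i}(-1)^{i-1}x_i$, which differs from $x_1-x_2$ as soon as $n\ge 4$. Your argument implicitly assumes a last column distinct from the first, and that is the right restriction.
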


\begin{lemma}\label{lem:NKOdd11InL}Assume that  $n \ge k$ and  $n + k$ is odd. Then $\mathsf x_{1\,1}  - \mathsf x_{2\,1} \in \mathcal L_{\det_{n\,k}}$.
\end{lemma}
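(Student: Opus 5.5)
We follow the proof of Lemma~\ref{lem:NKEven11InL}, replacing Lemma~\ref{lem:DetNKXiEqX1} by its odd analogue, Lemma~\ref{lem:DetNKXiEqX1Odd}. Take the matrix
\[
A = E_{3\,2} + E_{4\,3} + \ldots + E_{(k)\,(k-1)} + E_{3\,k} + E_{4\,k} + \ldots + E_{n\,k} \in \M_{n\,k}(\F).
\]
Its first column and first two rows are zero. Columns $2,\ldots,k$ of $A$ are exactly columns $2,\ldots,k$ of the matrix in Lemma~\ref{lem:DetNKXiEqX1Odd}. The strategy is to show that $l_{\det_{n\,k}, A} = (-1)^{k-1}(\mathsf x_{1\,1} - \mathsf x_{2\,1})$. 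Since $\mathcal L_{\det_{n\,k}}$ is a vector space and $(-1)^{k-1}\neq 0$, this gives $\mathsf x_{1\,1} - \mathsf x_{2\,1} \in \mathcal L_{\det_{n\,k}}$.

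To compute $l_{\det_{n\,k},A}(X)$ for arbitrary $X\in\M_{n\,k}(\F)$, apply Lemma~\ref{def:partDer}. It gives $l_{\det_{n\,k},A}(X) = g'_{A,X}(0)$, where $g_{A,X}(t) = \det_{n\,k}(A+tX)$, and this equals the coefficient $a_1$ of $t$. By Corollary~\ref{cor:CullisBinomialExpansion}, $a_1$ is the sum over $1\le i_1\le k$ of $\det_{n\,k}$ of $A$ with its $i_1$-th column replaced by $X(|i_1]$. For $i_1 \ge 2$ the first column of that matrix is still $A(|1] = 0$, so its Cullis determinant vanishes by multilinearity in columns. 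Hence only the term $i_1 = 1$ survives, and
\[
a_1 = \det_{n\,k}\Bigl(X(|1] \Big| A(|2] \Big| \ldots \Big| A(|k]\Bigr).
\]
This matrix has exactly the shape in Lemma~\ref{lem:DetNKXiEqX1Odd} with $x_i = x_{i\,1}$. That lemma therefore gives $a_1 = (-1)^{k-1}(x_{1\,1} - x_{2\,1})$, which is the required identity.

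The only point needing care is the index bookkeeping: we must check that the columns of $A$ match the pattern of Lemma~\ref{lem:DetNKXiEqX1Odd}, with rows $1,2$ zero, the identity block placed in rows $3,\ldots,k$ and columns $2,\ldots,k-1$, and the last column equal to $1$ from row $3$ down. Small $k$ (for example $k=1,2$) needs a separate look. When $k=1$ there are no other columns, so $A=0$ and $\det_{n\,1}$ is linear. Then $l_{\det_{n\,1},A} = \det_{n\,1}$ is the alternating sum $\mathsf x_{1\,1} - \mathsf x_{2\,1} + \ldots$, which Lemma~\ref{lem:DetNKXiEqX1Odd} also covers when read with the degenerate pattern. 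I would simply rely on Lemma~\ref{lem:DetNKXiEqX1Odd} as stated for all $n\ge k\ge1$. Note that no assumption on $|\F|$ is needed, because the directional derivative is defined formally as a coefficient in $\F[t]$.
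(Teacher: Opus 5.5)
Your main argument is the paper's proof. It uses the same matrix $A$, the identification $l_{\det_{n\,k},A}(X)=a_1$ via Lemma~\ref{def:partDer}, and the expansion of Corollary~\ref{cor:CullisBinomialExpansion}, where every term with $i_1\ge 2$ vanishes because of the zero first column. It ends with the same appeal to Lemma~\ref{lem:DetNKXiEqX1Odd}. For $k\ge 2$ this is correct, and you are right that no hypothesis on $|\F|$ is needed.

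Your paragraph on $k=1$, however, is wrong, and it cannot be repaired because the statement itself fails there.
\begin{itemize}
\item For $k=1$ the function $\det_{n\,1}=\sum_{i=1}^{n}(-1)^{i-1}\mathsf x_{i\,1}$ is linear. So $l_{\det_{n\,1},A}=\det_{n\,1}$ for every $A\in\M_{n\,1}(\F)$, and $\mathcal L_{\det_{n\,1}}$ is one-dimensional, spanned by this alternating sum.
\item Since $n+k$ is odd, $n$ is even. For $n\ge 4$, $\mathsf x_{1\,1}-\mathsf x_{2\,1}$ is not a multiple of $\mathsf x_{1\,1}-\mathsf x_{2\,1}+\mathsf x_{3\,1}-\cdots-\mathsf x_{n\,1}$ (compare the coefficients of $\mathsf x_{1\,1}$ and $\mathsf x_{3\,1}$).
\item No ``degenerate reading'' of Lemma~\ref{lem:DetNKXiEqX1Odd} rescues this. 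At $k=1$ that lemma would assert $\det_{n\,1}(X)=x_1-x_2$, which contradicts the alternating sum you wrote down yourself.
\end{itemize}
So you should not rely on that lemma for all $n\ge k\ge 1$. Restrict to $k\ge 2$; the case $k=1$, $n=2$ is trivial, since then $\det_{2\,1}=\mathsf x_{1\,1}-\mathsf x_{2\,1}$.

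The paper's proof has the same defect, since it also invokes Lemma~\ref{lem:DetNKXiEqX1Odd} as stated for $k\ge 1$. This is harmless downstream, where $k\ge 3$ is assumed.
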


\begin{proof}The proof of this lemma is implicitly contained in the proof of Lemma~3.3 in~\cite{Guterman2025b}. We provide an explicit proof for the clarity and convenience.

Let us show that
\begin{equation}\label{lem:NKOdd11InL:eq5}
l_{\det_{n\,k}, A} = \frac{\partial \det_{n\,k}}{\partial X}(A) = (-1)^{k-1}\left(x_{1\,1} - x_{2\,1}\right)\;\;\mbox{for all}\;\; X = (x_{i\,j}) \in \M_{n\,k}(\F),
\end{equation}
where $A \in \M_{n\, k}(\F)$ is defined by
\[
A = E_{3\,2} + E_{4\,3} + \ldots + E_{k\, k-1} + E_{3\, k} + \ldots + E_{n\, k}  = \begin{pmatrix}
0 & 0 & 0 & \cdots & 0\\
0 & 0 & 0 & \cdots & 0\\
0 & 1 & 0 &\cdots & 1\\
0 & 0 & 1 &\cdots & 1\\
\vdots & \vdots & \vdots & \ddots & \vdots\\
0 & 0 & 0 & \cdots & 1\\
\vdots & \vdots & \vdots & \ddots & \vdots\\
0 & 0 & 0 & \cdots & 1
\end{pmatrix}.
\]
This fact clearly implies the statement of the lemma.

Relying on Lemma~\ref{def:partDer}, we have the equality 
\begin{equation}\label{lem:NKOdd11InL:eq2}
\frac{\partial \det_{n\,k}}{\partial X}(A) = g_{A, X}'(0),
\end{equation}
where $g_{A, X} = \det_{n\,k}(A + tX) \in \F[t]$. By Corollary~\ref{cor:DegDetABLEQK} we conclude that there exist $a_0, \ldots, a_k \in \F$ such that
 \[ g_{A, X} = \det_{n\, k} (A + t Y) = a_0 + a_1t + \ldots + a_k t^k.
  \]
The Definition~\ref{def:onedimder} implies that 
\begin{equation}\label{lem:NKOdd11InL:eq1}
g_{A, X}'(0) = a_1.
\end{equation}
Using the expansion~\eqref{eq:CullisBinomialExpansion}, we obtain that
\[
a_1 = \sum_{1 \le i_1 \le k} \det_{n\,k}\Bigl(A(|1]\Big|\ldots \Big| X(|i_1] \Big| \ldots \Big| A(|k] \Bigr).
\]
Since $A(|1]$ is a zero column, we conclude by splitting off the first term that
\begin{multline}\label{lem:NKOdd11InL:eq3}
a_1 = \sum_{1 \le i_1 \le k} \det_{n\,k}\Bigl(A(|1]\Big|\ldots \Big| X(|i_1] \Big| \ldots \Big| A(|k] \Bigr)\\
 = \det_{n\,k} \Bigl(X(|1] \Big| A(|2] \Big| \ldots \Big| A(|k]\Bigr)  + \sum_{2 \le i_1 \le k} \det_{n\,k}\Bigl(A(|1]\Big|\ldots \Big| X(|i_1] \Big| \ldots \Big| A(|k] \Bigr)\\
 = \det_{n\,k} \Bigl(X(|1] \Big| A(|2] \Big| \ldots \Big| A(|k]\Bigr)  + \sum_{2 \le i_1 \le k} \det_{n\,k}\Bigl(0\Big|\ldots \Big| X(|i_1] \Big| \ldots \Big| A(|k] \Bigr)\\
 =  \det_{n\,k} \Bigl(X(|1] \Big| A(|2] \Big| \ldots \Big| A(|k]\Bigr) + \sum_{2 \le i_1 \le k} 0
 = \det_{n\,k} \Bigl(X(|1] \Big| A(|2] \Big| \ldots \Big| A(|k]\Bigr).
\end{multline}

Now note that $X(|1] \Big| A(|2] \Big| \ldots \Big| A(|k]$ has the form described in the statement of Lemma~\ref{lem:DetNKXiEqX1Odd} for $x_1 = x_{1\,1}, \ldots, x_n = x_{n\,1}$. This implies that\begin{equation}\label{lem:NKOdd11InL:eq4}
\det_{n\,k} \Bigl(X(|1] \Big| A(|2] \Big| \ldots \Big| A(|k]\Bigr) = (-1)^{k-1}\left(x_{1\,1} - x_{2\,1}\right).
\end{equation}
By aligning the equalities~\eqref{lem:NKOdd11InL:eq2}, \eqref{lem:NKOdd11InL:eq1}, \eqref{lem:NKOdd11InL:eq3} and~\eqref{lem:NKOdd11InL:eq4}  together, we obtain the equality~\eqref{lem:NKOdd11InL:eq5}, which concludes the proof.
\end{proof}

\begin{lemma}\label{lem:NKOddIJInL}Assume that  $n \ge k,$ $|\F| > k$ and  $n + k$ is odd. Then $(\mathsf x_{i\,j}  - \mathsf x_{(i+1)\,j}) \in \mathcal L_{\det_{n\,k}}$ for all $1 \le i < n$ and $1 \le j \le k$.
\end{lemma}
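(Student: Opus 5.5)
The plan is to follow the same pattern as in the proof of Lemma~\ref{lem:NKEvenEveryIJInL}: take the single element of $\mathcal L_{\det_{n\,k}}$ that we already have, and move it around using the linear preservers $\SCS^{\mathrm{odd}}_{i\,j}$ together with Lemma~\ref{thm:surjpres}.

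First, by Lemma~\ref{lem:NKOdd11InL} we have $l_0 := \mathsf x_{1\,1} - \mathsf x_{2\,1} \in \mathcal L_{\det_{n\,k}}$. Fix $1 \le i < n$ and $1 \le j \le k$. By Lemma~\ref{lem:ReduceTo11ByShiftOdd}, $\SCS^{\mathrm{odd}}_{i\,j}$ is an invertible linear map preserving $\det_{n\,k}$. By linearity, the pair $\phi = \psi = \SCS^{\mathrm{odd}}_{i\,j}$ therefore satisfies the condition~\eqref{cond:DETNK}:
\[
\det_{n\,k}(X+\lambda Y) = \det_{n\,k}\bigl(\SCS^{\mathrm{odd}}_{i\,j}(X) + \lambda \SCS^{\mathrm{odd}}_{i\,j}(Y)\bigr).
\]
The map $\phi$ is surjective, since it is invertible. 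Moreover $\deg(\det_{n\,k}) = k < |\F|$. Hence Lemma~\ref{thm:surjpres} applies and gives $l_0 \circ \SCS^{\mathrm{odd}}_{i\,j} \in \mathcal L_{\det_{n\,k}}$.

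Next, I would compute this composition from Definition~\ref{def:SCSDefOdd}. The first row of $\SCS^{\mathrm{odd}}_{i\,j}(X)$ comes from row $i$ of $X$. Since $i<n$, the second row comes from row $i+1$ of $X$; this uses the cyclic shift without wrap-around, which is exactly why the hypothesis $i<n$ is needed. The first column of the result is column $j$ of $X$, multiplied by $(-1)^{1-\delta_{1\,j}}$, and the whole matrix is multiplied by $(-1)^{i+1}$. So, with the sign $\varepsilon = (-1)^{i+1}(-1)^{1-\delta_{1\,j}}$,
\[
(l_0 \circ \SCS^{\mathrm{odd}}_{i\,j})(X) = \varepsilon\,(x_{i\,j} - x_{(i+1)\,j}).
\]
Since $\mathcal L_{\det_{n\,k}}$ is a vector space and $\varepsilon = \pm 1$, we conclude that $\mathsf x_{i\,j} - \mathsf x_{(i+1)\,j} \in \mathcal L_{\det_{n\,k}}$.

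The only step requiring care is the index bookkeeping in this last computation. In particular, one must check that the sign multiplying row $i+1$ matches the sign multiplying row $i$. This holds because, in the odd case, $\SCS^{\mathrm{odd}}_{i\,j}$ does not negate the wrapped rows, unlike $\SCS^{\mathrm{even}}_{i\,j}$, and the wrapped rows are not involved anyway when $i<n$. I do not expect any genuine obstacle beyond this check.
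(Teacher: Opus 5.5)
Your proposal is correct and follows the paper's own argument. You start from $\mathsf x_{1\,1}-\mathsf x_{2\,1}\in\mathcal L_{\det_{n\,k}}$ (Lemma~\ref{lem:NKOdd11InL}), transport it by the invertible preserver $\SCS^{\mathrm{odd}}_{i\,j}$ via Lemma~\ref{thm:surjpres}, and read off $(-1)^{i+1}(-1)^{1-\delta_{1\,j}}(\mathsf x_{i\,j}-\mathsf x_{(i+1)\,j})$ from Definition~\ref{def:SCSDefOdd}, using $i<n$ exactly as you describe.
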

\begin{proof}
Lemma~\ref{lem:NKOdd11InL} implies that $\mathsf x_{1\,1} - \mathsf x_{2\,1} \in \mathcal L_{\det_{n\,k}}$. Since $\SCS^{\mathrm{odd}}_{i\,j}$ is a linear map preserving $\det_{n\,k}$, then
\begin{multline*}
\det_{n\,k}(X + \lambda Y) = \det_{n\,k}(\SCS^{\mathrm{odd}}_{i\,j}(X + \lambda Y))\\ = \det_{n\,k}(\SCS^{\mathrm{odd}}_{i\,j}(X) + \lambda \SCS^{\mathrm{odd}}_{i\,j}(Y)))\;\;\mbox{for all}\;\;X, Y \in \M_{n\,k}(\F), \lambda \in \F.
\end{multline*}
Therefore, 
\begin{equation}\label{lem:NKOddIJInL:eq1}
l =  (\mathsf x_{1\,1} - \mathsf x_{2\,1}) \circ \SCS^{\mathrm{odd}}_{i\,j} \in \mathcal L_P
\end{equation}
  by Lemma~\ref{thm:surjpres}. Note that
\begin{multline*}
(\mathsf x_{1\,1} - \mathsf x_{2\,1}) \circ \SCS^{\mathrm{odd}}_{i\,j}(X) = (\SCS^{\mathrm{odd}}_{i\,j})(X)_{1\,1} - (\SCS^{\mathrm{odd}}_{i\,j})(X)_{2\,1}\\ = (-1)^{i+1}
(-1)^{1 - \delta_{1\,j}}\left(x_{i\,j} - x_{(i+1)\,j}\right)\qquad \mbox{for all}\;\;X \in \M_{n\,k}(\F)
\end{multline*}
by the definition of $\SCS^{\mathrm{odd}}_{i\,j}$. Therefore,
\begin{equation}\label{lem:NKOddIJInL:eq2}
l = (-1)^{i+1}
(-1)^{1 - \delta_{1\,j}}(\mathsf x_{i\,j}  - \mathsf x_{(i+1)\,j}).
\end{equation}
Since $\L_{\det_{n\,k}}$ is a vector space, then~\eqref{lem:NKOddIJInL:eq1} and~\eqref{lem:NKOddIJInL:eq2} together imply that $(\mathsf x_{i\,j}  - \mathsf x_{(i+1)\,j}) \in \L_{\det_{n\,k}}$. 
\end{proof}

\begin{corollary}\label{cor:DimNKOddEqNK}Assume that  $n \ge k$,  $|\F| > k$   and  $n + k$ is even. Then
\begin{enumerate}[label=(\alph*), ref=(\alph*)]
\item\label{cor:DimNKOddEqNK:part1} $\L_{\det_{n\,k}} = \Span (\{\mathsf x_{i\,j}  - \mathsf x_{(i+1)\,j} \mid 1 \le i \le (n-1),\; 1 \le j \le k \})$;
\item\label{cor:DimNKOddEqNK:part2} $\dim(\L_{\det_{n\,k}}) = (n-1)k$.
\end{enumerate}
\end{corollary}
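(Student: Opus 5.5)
As the paper words it, this corollary assumes that $n+k$ is \emph{even}. My plan is to test it against Corollary~\ref{cor:DimNKEvenEqNK} and the structure of the differences $\mathsf x_{i\,j}-\mathsf x_{(i+1)\,j}$. I expect this to show that, under that hypothesis, both parts fail. I will not claim a proof of the literal statement.

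\textbf{Step 1 (what the hypotheses give).} Assume $n\ge k$, $|\F|>k$ and $n+k$ even. Then Corollary~\ref{cor:DimNKEvenEqNK}\ref{cor:DimNKEvenEqNK:part1} gives $\L_{\det_{n\,k}}=\M_{n\,k}^*(\F)$. Part~\ref{cor:DimNKEvenEqNK:part2} of that corollary gives $\dim(\L_{\det_{n\,k}})=nk$.

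\textbf{Step 2 (the span of differences is proper).} Let $D=\Span(\{\mathsf x_{i\,j}-\mathsf x_{(i+1)\,j}\})$. Take the matrix $J\in W_{n\,k}$ all of whose entries equal $1$ (Definition~\ref{def:WNK}). Every functional $\mathsf x_{i\,j}-\mathsf x_{(i+1)\,j}$ vanishes on $J$, so every element of $D$ vanishes on $J$. On the other hand, $\mathsf x_{1\,1}(J)=1$.
\begin{itemize}
\item Hence $\mathsf x_{1\,1}\notin D$, while $\mathsf x_{1\,1}\in\L_{\det_{n\,k}}$ by Lemma~\ref{lem:NKEven11InL}. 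So part~\ref{cor:DimNKOddEqNK:part1} fails.
\item More precisely, $D$ annihilates the $k$-dimensional space $W_{n\,k}$, so $\dim D\le (n-1)k$. In fact equality holds, since the $(n-1)k$ differences are linearly independent. Since $k\ge1$, this gives $\dim D=(n-1)k<nk=\dim\L_{\det_{n\,k}}$, so part~\ref{cor:DimNKOddEqNK:part2} fails as well.
\end{itemize}

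\textbf{Step 3 (the intended version).} Both formulas are exactly what one obtains when $n+k$ is odd.
\begin{itemize}
\item Lemma~\ref{lem:NKOddIJInL} gives $D\subseteq\L_{\det_{n\,k}}$.
\item Corollary~\ref{cor:WNKIsInRadDetNK} together with Lemma~\ref{lem:RadAnnLP} shows that every element of $\L_{\det_{n\,k}}$ annihilates $W_{n\,k}$. Therefore $\dim\L_{\det_{n\,k}}\le nk-k=\dim D$, and equality follows.
\end{itemize}
This strongly suggests that ``even'' in the statement is a misprint for ``odd''.

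The main point to be careful about is Step~2. Independence of the differences is routine: they form a chain along each column. The decisive observation is simply that $J$ is killed by $D$ but not by $\mathsf x_{1\,1}$. So, as worded for $n+k$ even, the corollary contradicts Corollary~\ref{cor:DimNKEvenEqNK} and cannot be proved; the argument above is the disproof.
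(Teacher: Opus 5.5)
Your diagnosis is correct. With ``$n+k$ even'', both parts contradict Corollary~\ref{cor:DimNKEvenEqNK}: the all-ones matrix $J$ is killed by every $\mathsf x_{i\,j}-\mathsf x_{(i+1)\,j}$ but not by $\mathsf x_{1\,1}$. So ``even'' is a misprint for ``odd'', which is also the case in which the corollary is used in Lemma~\ref{lem:NKOddRadDetNK}.

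The paper gives no written proof of this corollary, and your Step~3 is the natural one. Lemma~\ref{lem:NKOddIJInL} gives $D\subseteq\L_{\det_{n\,k}}$. The reverse inclusion, which Lemma~\ref{lem:NKOddIJInL} alone does not give, follows from Corollary~\ref{cor:WNKIsInRadDetNK} together with Lemma~\ref{lem:RadAnnLP}. There is no circularity: Lemma~\ref{lem:NKOddRadDetNK} only needs the lower bound $\dim\L_{\det_{n\,k}}\ge (n-1)k$, and you never invoke that lemma.
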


Here we provide another proof of~\cite[Lemma~3.7]{Guterman2025b} which employs the theory developed previously.

\begin{lemma}\label{lem:NKOddRadDetNK}Assume that $n \ge k$,  $|\F| > k$  and $n + k$ is odd. Then\linebreak
$\rad(\det_{n\,k}) =  W_{n\,k}.$
\end{lemma}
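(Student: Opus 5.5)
We prove the two inclusions $W_{n\,k} \subseteq \rad(\det_{n\,k})$ and $\rad(\det_{n\,k}) \subseteq W_{n\,k}$, mirroring the proof of Lemma~\ref{lem:NKEvenAdditionPreserverIsZero} but now with the odd-case description of $\L_{\det_{n\,k}}$.

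The first inclusion is exactly Corollary~\ref{cor:WNKIsInRadDetNK}, which follows from Lemma~\ref{lem:AdditionNPLusKOdd}. Indeed, adding $\lambda X$ with $X \in W_{n\,k}$ keeps the matrix in the form required there, since $\lambda X \in W_{n\,k}$. Hence
\[
\dim(\rad(\det_{n\,k})) \ge \dim(W_{n\,k}) = k.
\]

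For the reverse inclusion we use Lemma~\ref{lem:RadAnnLP}, which applies because $\deg(\det_{n\,k}) = k < |\F|$: every $Y \in \rad(\det_{n\,k})$ is annihilated by all of $\L_{\det_{n\,k}}$. By Lemma~\ref{lem:NKOddIJInL}, the functionals $\mathsf x_{i\,j} - \mathsf x_{(i+1)\,j}$ lie in $\L_{\det_{n\,k}}$ for all $1 \le i < n$ and $1 \le j \le k$. Hence $y_{i\,j} = y_{(i+1)\,j}$ for all such $i,j$. Chaining these equalities down each column shows that all rows of $Y$ coincide, that is, $Y \in W_{n\,k}$. This gives $\rad(\det_{n\,k}) = W_{n\,k}$.

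Alternatively, the dimension count of Lemma~\ref{lem:dzeroeqk}\ref{lem:dzeroeqk:part1} gives the same result. The functionals $\mathsf x_{i\,j} - \mathsf x_{(i+1)\,j}$ are linearly independent, since their annihilator is exactly $W_{n\,k}$, which has dimension $k$. So $\dim \L_{\det_{n\,k}} \ge (n-1)k$, and therefore
\[
\dim \rad(\det_{n\,k}) \le nk - (n-1)k = k.
\]
Combined with $W_{n\,k} \subseteq \rad(\det_{n\,k})$ and $\dim W_{n\,k} = k$, this forces equality.

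There is no real obstacle. The only points to check are the hypothesis $|\F| > k$ needed for Lemmas~\ref{lem:RadAnnLP} and~\ref{lem:NKOddIJInL}, and the fact that $\dim W_{n\,k} = k$. As a byproduct, the argument gives $\dim(\L_{\det_{n\,k}}) + \dim(\rad(\det_{n\,k})) = nk$ in the odd case as well.
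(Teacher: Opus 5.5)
Your proposal is correct, and your second (dimension-count) argument is exactly the paper's proof: Lemma~\ref{lem:dzeroeqk}\ref{lem:dzeroeqk:part1} together with $\dim \L_{\det_{n\,k}} = (n-1)k$ gives $\dim \rad(\det_{n\,k}) \le k$, and then $W_{n\,k} \subseteq \rad(\det_{n\,k})$ with $\dim W_{n\,k} = k$ forces equality. Your first argument is a slightly more direct variant that applies Lemma~\ref{lem:RadAnnLP} pointwise. It therefore needs only the containment from Lemma~\ref{lem:NKOddIJInL}, not the exact dimension of $\L_{\det_{n\,k}}$, which the paper takes from Corollary~\ref{cor:DimNKOddEqNK}, stated there without proof and with ``even'' in place of ``odd''.
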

\begin{proof}On the one hand, $\dim(\L_{\det_{n\,k}}) + \dim(\rad(\det_{n\,k})) \le nk$ by Lemma~\ref{lem:dzeroeqk}\ref{lem:dzeroeqk:part1}. Since $\dim(\L_{\det_{n\,k}}) = (n-1)k$ by Corollary~\ref{cor:DimNKOddEqNK}\ref{cor:DimNKOddEqNK:part2}, 
\begin{equation}\label{lem:NKOddRadDetNK:eq1}
\dim(\rad(\det_{n\,k})) \le k.
\end{equation}
On the other hand,
\begin{equation}\label{lem:NKOddRadDetNK:eq2}
\rad(\det_{n\,k}) \supseteq W_{n\,k}
\end{equation}
by Corollary~\ref{cor:WNKIsInRadDetNK}. The definition of $W_{n\,k}$ implies that
\begin{equation}\label{lem:NKOddRadDetNK:eq4}
\dim(W_{n\,k}) = k.
\end{equation}
Hence,
\begin{equation}\label{lem:NKOddRadDetNK:eq3}
\dim(\rad(\det_{n\,k})) \overset{\eqref{lem:NKOddRadDetNK:eq2}}{\ge} \dim(W_{n\,k}) \overset{\eqref{lem:NKOddRadDetNK:eq4}}{=\joinrel=\joinrel=} k.
\end{equation}
By aligning together the inequalities~\eqref{lem:NKOddRadDetNK:eq1} and~\eqref{lem:NKOddRadDetNK:eq3} we conclude that\linebreak $\dim(\rad(\det_{n\,k})) = k$. Hence, $\rad(\det_{n\,k}) = W_{n\,k}$ by~\eqref{lem:NKOddRadDetNK:eq2} and~\eqref{lem:NKOddRadDetNK:eq4}.
\end{proof}

\begin{corollary}\label{cor:NKOddSatCond}Assume that  $n \ge k$  and  $n + k$ is odd. Then $\dim\left(\mathcal L_{\det_{n\,k}}\right) + \dim \left(\rad(\det_{n\,k})\right) = nk$.
\end{corollary}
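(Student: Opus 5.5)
The plan is to combine the two dimension computations already available for the case where $n+k$ is odd. By Corollary~\ref{cor:DimNKOddEqNK}\ref{cor:DimNKOddEqNK:part2}, $\dim(\mathcal L_{\det_{n\,k}}) = (n-1)k$. By Lemma~\ref{lem:NKOddRadDetNK}, $\rad(\det_{n\,k}) = W_{n\,k}$, and $\dim(W_{n\,k}) = k$ directly from Definition~\ref{def:WNK}. Adding these gives $(n-1)k + k = nk$, which is exactly the claimed equality. Like Corollary~\ref{cor:NKEvenSatDimCond} in the even case, this corollary inherits the hypothesis $|\F| > k$ from the lemmas it rests on.

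One point needs checking first. Corollary~\ref{cor:DimNKOddEqNK} is stated with the hypothesis ``$n+k$ is even'', but its content (the span of the differences $\mathsf x_{i\,j} - \mathsf x_{(i+1)\,j}$) clearly concerns the odd case, coming from Lemma~\ref{lem:NKOddIJInL}. I would therefore re-derive the dimension count directly rather than rely on a possibly mis-stated hypothesis.

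Lemma~\ref{lem:NKOddIJInL} puts the $(n-1)k$ functionals $\mathsf x_{i\,j} - \mathsf x_{(i+1)\,j}$ (with $1 \le i < n$, $1 \le j \le k$) into $\mathcal L_{\det_{n\,k}}$. They are linearly independent, since for each fixed column $j$ the consecutive row differences of a vector in $\F^n$ are independent. Hence $\dim(\mathcal L_{\det_{n\,k}}) \ge (n-1)k$. For the matching upper bound, Lemma~\ref{lem:dzeroeqk}\ref{lem:dzeroeqk:part1} gives $\dim(\mathcal L_{\det_{n\,k}}) + \dim(\rad(\det_{n\,k})) \le nk$. Corollary~\ref{cor:WNKIsInRadDetNK} gives $\dim(\rad(\det_{n\,k})) \ge \dim(W_{n\,k}) = k$. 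Together these force $\dim(\mathcal L_{\det_{n\,k}}) \le (n-1)k$ and $\dim(\rad(\det_{n\,k})) \le k$, so both inequalities are equalities and the sum is $nk$.

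The only real obstacle is this bookkeeping: I must use the $\ge$ bound from Lemma~\ref{lem:NKOddIJInL} and Corollary~\ref{cor:WNKIsInRadDetNK}, and the $\le$ bound from Lemma~\ref{lem:dzeroeqk}, without any circular appeal to the sloppily stated corollary. Once these are in place, the result is immediate.
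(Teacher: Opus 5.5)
Your proposal is correct and is essentially the paper's own (unwritten) argument. The corollary is read off from $\dim(\mathcal L_{\det_{n\,k}})=(n-1)k$ (Corollary~\ref{cor:DimNKOddEqNK}, where ``even'' is indeed a typo) and $\rad(\det_{n\,k})=W_{n\,k}$ (Lemma~\ref{lem:NKOddRadDetNK}), and your squeeze of Lemma~\ref{lem:NKOddIJInL} and Corollary~\ref{cor:WNKIsInRadDetNK} against Lemma~\ref{lem:dzeroeqk}\ref{lem:dzeroeqk:part1} is the same squeeze the paper uses to prove Lemma~\ref{lem:NKOddRadDetNK}; you are also right that $|\F|>k$ is tacitly required, since the paper's lemmas in this chain assume it.

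One caveat affects your chain and the paper's alike: both also need $k\ge 2$. For $k=1$ and even $n\ge 4$, $\det_{n\,1}=\sum_i(-1)^{i-1}\mathsf x_{i\,1}$ is a single linear form, so $\mathcal L_{\det_{n\,1}}$ is one-dimensional and Lemma~\ref{lem:NKOddIJInL} fails. The identity itself still holds in that case, because $\dim(\rad(\det_{n\,1}))=n-1$.
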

%

\begin{theorem}\label{thm:homogenphipsiDetNKOdd}Assume that  $n \ge k + 2,$ $|\F| > k$, $k \ge 3$ and  $n + k$ is odd.  If  $\phi, \psi \colon \F^n \to \F^n$ are two maps satisfying the condition~(\ref{cond:DETNK}), then there exist $A \in \M_{n\, n}(\F)$ and $B \in \M_{k\, k}(\F)$ such that
\begin{equation}\label{thm:homogenphipsiDetNKOdd:eqq}
\det_{n\,k} \Bigl(A(|i_1,\ldots, i_k]\Bigr) \det_k \Bigl(B\Bigr) = (-1)^{i_1 + \ldots + i_k - 1 - \ldots - k}
\end{equation}
for all increasing sequences $1 \le i_1 < \ldots < i_k \le n $ and
\begin{equation}\label{thm:homogenphipsiDetNKOdd:eq}
\phi(X) = \psi(X) = AXB \pmod{W_{n\,k}}\qquad\mbox{for all}\;\;X \in \M_{n\,k}(\F).
\end{equation}
\end{theorem}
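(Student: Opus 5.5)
The plan mirrors the proof of Theorem~\ref{thm:homogenphipsiDetNKEven}, with the nontrivial radical $W_{n\,k}$ now carried along. The polynomial $\det_{n\,k}$ is homogeneous of degree $k$, and $|\F| > k$. By Corollary~\ref{cor:NKOddSatCond} we have $\dim(\mathcal L_{\det_{n\,k}}) + \dim(\rad(\det_{n\,k})) = nk$. Hence Theorem~\ref{thm:homogenphipsi} applies to $\phi,\psi$ on $\M_{n\,k}(\F)\cong\F^{nk}$. It yields a linear map $T_{\rad}$ on $\M_{n\,k}(\F)/\rad(\det_{n\,k})$ that preserves $(\det_{n\,k})_{\rad}$ and satisfies $\pi\circ\phi=\pi\circ\psi=T_{\rad}\circ\pi$. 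By Lemma~\ref{lem:NKOddRadDetNK}, $\rad(\det_{n\,k}) = W_{n\,k}$, so $\pi$ is the projection onto $\M_{n\,k}(\F)/W_{n\,k}$.

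Next we lift $T_{\rad}$ to a linear map on $\M_{n\,k}(\F)$. Choose a complement $U$ of $W_{n\,k}$ in $\M_{n\,k}(\F)$; for instance, take $U$ to be the matrices with zero first row. Let $s\colon \M_{n\,k}(\F)/W_{n\,k}\to U$ be the inverse of $\pi|_U$, and define $T = s\circ T_{\rad}\circ\pi$. Then $T$ is linear and satisfies $\pi\circ T = T_{\rad}\circ\pi$. It preserves $\det_{n\,k}$, because
\[
\det_{n\,k}(T(X)) = (\det_{n\,k})_{\rad}(\pi(T(X))) = (\det_{n\,k})_{\rad}(T_{\rad}(\pi(X))) = (\det_{n\,k})_{\rad}(\pi(X)) = \det_{n\,k}(X).
\]
We also have $\pi\circ\phi=\pi\circ\psi=\pi\circ T$, which means $\phi(X)-T(X)\in W_{n\,k}$ and $\psi(X)-T(X)\in W_{n\,k}$ for every $X$.

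Now apply Lemma~\ref{lem:THMNPLUSKODDKALL}, which is valid since $|\F|>k\ge 3$, $n\ge k+2$ and $n+k$ is odd. It gives $A\in\M_{n\,n}(\F)$ and $B\in\M_{k\,k}(\F)$ satisfying~\eqref{thm:homogenphipsiDetNKOdd:eqq}, together with a linear $\omega$ into $W_{n\,k}$, such that $T(X)=AXB+\omega(X)$. Since $\omega(X)\in W_{n\,k}$, we get $\phi(X)\equiv\psi(X)\equiv T(X)\equiv AXB \pmod{W_{n\,k}}$, which is~\eqref{thm:homogenphipsiDetNKOdd:eq}.

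There are two minor points. The statement writes $\phi,\psi\colon\F^n\to\F^n$; this should be read as maps on $\M_{n\,k}(\F)$ identified with $\F^{nk}$, and the general theorem is invoked in that form. The only step with real content is the lifting of $T_{\rad}$ to a genuine linear preserver, so that Lemma~\ref{lem:THMNPLUSKODDKALL} (which concerns maps on $\M_{n\,k}(\F)$, not on the quotient) can be used. The computation above, which uses $\det_{n\,k}=(\det_{n\,k})_{\rad}\circ\pi$, handles this, and the remaining steps are bookkeeping.
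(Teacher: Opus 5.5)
Your proposal is correct and follows the paper's proof essentially step for step. You apply Theorem~\ref{thm:homogenphipsi} with $\rad(\det_{n\,k}) = W_{n\,k}$, lift $T_{\rad}$ to $T = s\circ T_{\rad}\circ\pi$ through a linear right inverse of $\pi$, check that $\det_{n\,k}\circ T = \det_{n\,k}$, and read off $AXB$ modulo $W_{n\,k}$ from Lemma~\ref{lem:THMNPLUSKODDKALL}. The only difference is that the paper takes an arbitrary right inverse, while you use the explicit section onto matrices with zero first row. Your remark that $\phi,\psi$ should be maps on $\M_{n\,k}(\F)$ rather than $\F^n$ is also right.
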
 

\begin{proof}
The definition of $\det_{n\,k}$ implies that $\det_{n\,k}$ is a homogeneous polynomial of degree $k$. By Corollary~\ref{cor:NKEvenSatDimCond} we conclude that $\det_{n\,k}$, $\phi$ and $\psi$ satisfy the conditions of Theorem~\ref{thm:homogenphipsi}. Together with Lemma~\ref{lem:NKOddRadDetNK} this implies that there exists a linear map $T_{\rad (\det_{n\,k})} \colon \M_{n\,k}(\F)/W_{n\,k} \to \M_{n\,k}(\F)/W_{n\,k}$ preserving $\left(\det_{n\,k}\right)_{\rad (\det_{n\,k})}$ such that 
\begin{equation}\label{thm:homogenphipsiDetNKOdd:eq1}
\pi_{\rad (\det_{n\,k})} \circ \phi = \pi_{\rad (\det_{n\,k})} \circ \psi = T_{\rad (\det_{n\,k})} \circ \pi_{\rad (\det_{n\,k})}. 
\end{equation}

Let $\iota \colon \M_{n\,k}(\F)/W_{n\,k} \to \M_{n\,k}(\F)$ be any linear right inverse of $\pi_{\rad (\det_{n\,k})}$, that is, $\iota$ is a linear map satisfying the condition
\begin{equation}\label{thm:homogenphipsiDetNKOdd:eq2}
\pi_{\rad (\det_{n\,k})} \circ \iota = \id_{\M_{n\,k}(\F)/W_{n\,k}}.
\end{equation}
Definition~\ref{def:radicalfunc} implies that
\begin{multline}\label{thm:homogenphipsiDetNKOdd:eq3}
\det_{n\,k} \circ \iota = \left(\left(\det_{n\,k}\right)_{\rad (\det_{n\,k})} \circ \pi_{\rad (\det_{n\,k})}\right) \circ \iota\\
= \left(\det_{n\,k}\right)_{\rad (\det_{n\,k})} \circ \left(\pi_{\rad (\det_{n\,k})} \circ \iota\right)\overset{\eqref{thm:homogenphipsiDetNKOdd:eq2}}{=\joinrel=\joinrel=\joinrel=} \left(\det_{n\,k}\right)_{\rad (\det_{n\,k})}.
\end{multline}

Let $T$ be defined by 
\begin{equation}\label{thm:homogenphipsiDetNKOdd:eq4}
T = \iota \circ T_{\rad (\det_{n\,k})} \circ \pi_{\rad (\det_{n\,k})}.
\end{equation}
Then
\begin{multline*}\label{thm:homogenphipsiDetNKOdd:eq5}
\det_{n\,k} \circ T \overset{\eqref{thm:homogenphipsiDetNKOdd:eq4}}{=\joinrel=\joinrel=\joinrel=} \det_{n\,k} \circ \left(\iota \circ T_{\rad (\det_{n\,k})} \circ \pi_{\rad (\det_{n\,k})}\right)\\
= \left(\det_{n\,k} \circ \iota\right) \circ T_{\rad (\det_{n\,k})} \circ \pi_{\rad (\det_{n\,k})}\\
\overset{\eqref{thm:homogenphipsiDetNKOdd:eq3}}{=\joinrel=\joinrel=\joinrel=} \left(\det_{n\,k}\right)_{\rad (\det_{n\,k})} \circ T_{\rad (\det_{n\,k})} \circ \pi_{\rad (\det_{n\,k})}\\
 = \left(\left(\det_{n\,k}\right)_{\rad (\det_{n\,k})} \circ T_{\rad (\det_{n\,k})}\right) \circ \pi_{\rad (\det_{n\,k})}\\
 \overset{\scriptsize T_{\rad (\det_{n\,k})}\;\mbox{preserves}\;\left(\det_{n\,k}\right)_{\rad (\det_{n\,k})}}{=\joinrel=\joinrel=\joinrel=\joinrel=\joinrel=\joinrel=\joinrel=\joinrel=\joinrel=\joinrel=\joinrel=\joinrel=\joinrel=\joinrel=\joinrel=\joinrel=\joinrel=\joinrel=\joinrel=\joinrel=\joinrel=\joinrel=} \left(\det_{n\,k}\right)_{\rad (\det_{n\,k})} \circ \pi_{\rad (\det_{n\,k})} = \det_{n\,k}.
\end{multline*}
Thus, $T$ is a linear map preserving $\det_{n\,k}$. By Lemma~\ref{lem:THMNPLUSKODDKALL}, there exist $A \in \M_{n\, n}(\F)$ and $B \in \M_{k\, k}(\F)$ satisfying the condition~\eqref{thm:homogenphipsiDetNKOdd:eqq} and a linear map $\omega \colon \M_{n\,k}(\F) \to W_{n\,k}$ such that $T(X) = AXB + \omega(X)$ for all $X \in \M_{n\,k}(\F)$. This implies that

\begin{equation}\label{thm:homogenphipsiDetNKOdd:eq6}
T(X) = AXB + \omega(X)\;\;\mbox{for all}\;\;X \in \M_{n\,k}(\F).
\end{equation}
By applying $\pi_{\rad (\det_{n\,k})}$ to both the sides of~\eqref{thm:homogenphipsiDetNKOdd:eq6} we obtain that
\begin{equation}\label{thm:homogenphipsiDetNKOdd:eq7}
\left[\pi_{\rad (\det_{n\,k})} \circ  T\right](X) = \pi_{\rad (\det_{n\,k})}(AXB + \omega(X))\;\;\mbox{for all}\;\;X \in \M_{n\,k}(\F).
\end{equation}
Consider the right-hand side of~\eqref{thm:homogenphipsiDetNKOdd:eq7}. Since $\omega$ sends $ \M_{n\,k}(\F)$ to $W_{n\,k}$, then
\begin{equation}\label{thm:homogenphipsiDetNKOdd:eq8}
\pi_{\rad (\det_{n\,k})}(AXB + \omega(X)) = \pi_{\rad (\det_{n\,k})}(AXB)\;\;\mbox{for all}\;\;X \in \M_{n\,k}(\F).
\end{equation}
The left-hand side of~\eqref{thm:homogenphipsiDetNKOdd:eq7} is transformed as follows
\begin{multline}\label{thm:homogenphipsiDetNKOdd:eq9}
\pi_{\rad (\det_{n\,k})} \circ  T \overset{\eqref{thm:homogenphipsiDetNKOdd:eq4}}{=\joinrel=\joinrel=\joinrel=} \pi_{\rad (\det_{n\,k})} \circ \left(\iota \circ T_{\rad (\det_{n\,k})} \circ \pi_{\rad (\det_{n\,k})}\right)\\
= \left(\pi_{\rad (\det_{n\,k})} \circ \iota \right) \circ T_{\rad (\det_{n\,k})} \circ \pi_{\rad (\det_{n\,k})} \overset{\eqref{thm:homogenphipsiDetNKOdd:eq2}}{=\joinrel=\joinrel=\joinrel=} T_{\rad (\det_{n\,k})} \circ \pi_{\rad (\det_{n\,k})} 
\end{multline}
By substituting the equalities~\eqref{thm:homogenphipsiDetNKOdd:eq8} and~\eqref{thm:homogenphipsiDetNKOdd:eq9} into~\eqref{thm:homogenphipsiDetNKOdd:eq7} we obtain that
\begin{equation}\label{thm:homogenphipsiDetNKOdd:eq99}
\left[T_{\rad (\det_{n\,k})} \circ \pi_{\rad (\det_{n\,k})}\right](X) = \pi_{\rad (\det_{n\,k})}(AXB)\;\;\mbox{for all}\;\;X \in \M_{n\,k}(\F).
\end{equation}
Finally, substituting~\eqref{thm:homogenphipsiDetNKOdd:eq99} into~\eqref{thm:homogenphipsiDetNKOdd:eq1} yields
\begin{multline*}
\left[\pi_{\rad (\det_{n\,k})} \circ \phi\right](X) = \left[\pi_{\rad (\det_{n\,k})} \circ \psi\right](X)\\ = \pi_{\rad (\det_{n\,k})}(AXB)\;\;\mbox{for all}\;\;X \in \M_{n\,k}(\F).
\end{multline*}
This is equivalent to the condition~\eqref{thm:homogenphipsiDetNKOdd:eq}. Therefore, \eqref{thm:homogenphipsiDetNKOdd:eq} is satisfied.
\end{proof}

\section{Further work}
\label{sec:furthwork}

The author supposes that it is not possible to omit the conditions $\deg (P) > |\F|$ and $\dim (\mathcal L_P) + \dim (\rad(P)) = n$ in the statement of Theorem~\ref{thm:homogenphipsi}, but did not succeed to provide an example proving it. Any progress in this direction will be interesting.

When $\deg(P) < |\F|$, the main difficulty in constructing such example is that the condition 
\begin{equation}\label{eq:condfurthwork}
P(\mathbf{x} + \lambda \mathbf{y}) = P(\phi(\mathbf{x}) + \lambda \psi(\mathbf{y}))\;\;\mbox{for all}\;\;\mathbf{x},\mathbf{y} \in \mathbb \F^n,\; \lambda \in \F,
\end{equation}
provides $\deg(P)$ equations on $\phi$ and $\psi$ obtained by comparing coefficients at $\lambda^k$ on both the sides of the condition~\eqref{eq:condfurthwork}. From the author's experience, in most cases these equations imply that $\phi = \psi = \id$ and consequently $\psi$ and $\phi$ are linear. If $\deg(P) > |\F|$, then author tried the brute-force computer search, but it did not help.

Nevertheless, these conditions do not seem to be restrictive or difficult to verify, at least for the polynomial matrix invariants known to the author. The condition $\deg (P) < |\F|$ is frequently assumed when the characterisation of linear preservers is established. As the reader can observe in Section~\ref{sec:AppToCullisDet}, verifying the condition $\dim (\mathcal L_P) + \dim (\rad(P)) = n$ for $P = \det_{n\,k}$ (Definition~\ref{def:DETNK}) is a quite technical procedure. Moreover, the author notes that this procedure employs the same observations as the solution to the corresponding linear preserver problem. The same holds for the other polynomial matrix invariants discussed in Section~\ref{sec:intro}. 

%
%
%
%
%
%
%

\section{Declaration of competing interest}
We have no competing interest to declare.

\section{Data availability}
No data was used for the research described in the article.

\section*{Acknowledgements}

The author thanks his supervisor, Professor Alexander Guterman,  for pointing out the direction of research, constant support and for his comments on the draft. The research was supported by the scholarship of the Center for Absorption in Science, the Ministry for Absorption of Aliyah, the State of Israel.

\bibliographystyle{plain}
\bibliography{nonlinearmapspolynomial}

@book{Lang2002,
  title = {Algebra},
  ISBN = {9781461300410},
  ISSN = {2197-5612},
  url = {http://dx.doi.org/10.1007/978-1-4613-0041-0},
  DOI = {10.1007/978-1-4613-0041-0},
  journal = {Graduate Texts in Mathematics},
  publisher = {Springer New York},
  author = {Lang,  S.},
  year = {2002}
}

@article{Costara2021,
  title = {Nonlinear maps preserving the elementary symmetric functions},
  volume = {21},
  ISSN = {1855-3966},
  url = {http://dx.doi.org/10.26493/1855-3974.2488.f5c},
  DOI = {10.26493/1855-3974.2488.f5c},
  number = {1},
  journal = {Ars Mathematica Contemporanea},
  publisher = {University of Primorska Press},
  author = {Costara,  C.},
  year = {2021},
  pages = {P1.09}
}

@article{Dolinar2002,
  title = {Determinant preserving maps on matrix algebras},
  volume = {348},
  ISSN = {0024-3795},
  url = {http://dx.doi.org/10.1016/S0024-3795(01)00578-X},
  DOI = {10.1016/s0024-3795(01)00578-x},
  number = {1–3},
  journal = {Linear Algebra and its Applications},
  publisher = {Elsevier BV},
  author = {Dolinar,  G. and Šemrl,  P.},
  year = {2002},
  pages = {189–192}
}

@article{TAN2003311,
title = {On determinant preserver problems},
journal = {Linear Algebra and its Applications},
volume = {369},
pages = {311-317},
year = {2003},
issn = {0024-3795},
doi = {https://doi.org/10.1016/S0024-3795(02)00739-5},
url = {https://www.sciencedirect.com/science/article/pii/S0024379502007395},
author = {Tan V. and Wang F.}
}

@article{Kuzma2008,
  title = {A note on immanant preservers},
  volume = {155},
  ISSN = {1573-8795},
  url = {http://dx.doi.org/10.1007/s10958-008-9247-4},
  DOI = {10.1007/s10958-008-9247-4},
  number = {6},
  journal = {Journal of Mathematical Sciences},
  publisher = {Springer Science and Business Media LLC},
  author = {Kuzma,  B.},
  year = {2008},
  pages = {872–876}
}

@inbook{GF,
  author = {Frobenius, F. G.},
  title = {\"{U}ber die Darstellung der endlichen Gruppen durch lineare Substitutionen},
  publisher = {S. B. Deutsch. Akad. Wiss. Berlin},
  pages = {944--1015},
  year = 1897,
  doi = {10.3931/e-rara-18879},
}

@article{Waterhouse1983, author = {Waterhouse, W. C.}, title = {Invertibility 
of linear maps preserving matrix invariants}, journal = {Linear Multilinear 
Algebra}, volume = {13}, number = {2}, pages = {105--113}, year = {1983}, 
publisher = {Taylor \& Francis}, doi = {10.1080/03081088308817510}, 


optURL = { 
    
        https://doi.org/10.1080/03081088308817510
    
    

},
    
    

}

@misc{Guterman2025,
      title={{Linear maps preserving the Cullis' determinant. I}}, 
      author={Guterman, A. and Yurkov, A.},
      year={2025},
      eprint={2512.13437},
      archivePrefix={arXiv},
      primaryClass={math.CO},
      note={\emph{Available at} https://arxiv.org/abs/2512.13437}, 
}

@misc{Guterman2025b,
      title={{Linear maps preserving the Cullis' determinant. II}}, 
      author={Guterman, A. and Yurkov, A.},
      year={2025},
      eprint={2512.13445},
      archivePrefix={arXiv},
      primaryClass={math.CO},
      note={\emph{Available at} https://arxiv.org/abs/2512.13445},
}

@article{Guterman2025c, author = {Guterman, A. and Yurkov, A.}, 
 title = "{Linear maps preserving the Cullis'
determinant. III}", 
 year = 2025, 
journal = {Spec. {M}atrices}, 
 note = "\emph{(submitted for publication)}",
}

@article{NAKAGAMI2007422, title = {On {C}ullis’ determinant for rectangular 
matrices}, journal = {Linear Algebra Appl.}, volume = {422}, number = {2}, 
pages = {422--441}, year = {2007}, issn = {0024-3795}, doi = 
{10.1016/j.laa.2006.11.001}, author 
= {Nakagami, Y. and Yanai, H.} }

@book{cullis1913,
  title={Matrices and Determinoids: Volume 1},
  author={Cullis, C.~E.},
  series={Calcutta University Readership Lectures},
  year={1913},
  publisher={Cambridge University Press}
}

@article{LAMA199233,

title = {A survey of linear preserver problems},
journal = {Linear Multilinear Algebra},
author = {Pierce, S. and others},
volume = {33},
number = {1-2},
pages = {},
year  = {1992},
publisher = {Taylor & Francis},
doi = {10.1080/03081089208818176},

 }

@article{amiri2010, 
AUTHOR = {Amiri, A. and Fathy, M. and Bayat, M.}, 
     TITLE = {Generalization of some determinantal identities for non-square
              matrices based on {R}adic's definition},
   JOURNAL = {TWMS J. Pure Appl. Math.},
  FJOURNAL = {TWMS Journal of Pure and Applied Mathematics},
    VOLUME = {1},
      YEAR = {2010},
    NUMBER = {2},
     PAGES = {163--175},
      ISSN = {2076-2585,2219-1259},
   MRCLASS = {15A15},
  MRNUMBER = {2766622},
MRREVIEWER = {Christos\ Kravvaritis}, 
}

@article{Guterman11112009,
author = {A. Guterman and B. Kuzma},
title = {Preserving Zeros of a Polynomial},
journal = {Communications in Algebra},
volume = {37},
number = {11},
pages = {4038--4064},
year = {2009},
publisher = {Taylor \& Francis},
doi = {10.1080/00927870802545687},


URL = { 
    
        https://doi.org/10.1080/00927870802545687
    
    

},
eprint = { 
    
        https://doi.org/10.1080/00927870802545687
    
    

}

}

@article{SEMRL20081051,
title = {Commutativity preserving maps},
journal = {Linear Algebra and its Applications},
volume = {429},
number = {5},
pages = {1051-1070},
year = {2008},
issn = {0024-3795},
doi = {https://doi.org/10.1016/j.laa.2007.05.006},
url = {https://www.sciencedirect.com/science/article/pii/S0024379507002133},
author = {P. Šemrl}
}

\end{document}